\title[Set-theoretic potentialism]{The modal logic of set-theoretic potentialism and the potentialist maximality principles}
\author{Joel David Hamkins}
 \address[Joel David Hamkins]
          {Professor of Logic, University of Oxford, and
           Sir Peter Strawson Fellow in Philosophy, University College, Oxford}
\email{jhamkins@gc.cuny.edu}
\urladdr{http://jdh.hamkins.org}
\author{\Oystein\ Linnebo}
 \address[\Oystein\ Linnebo]
         {Department of Philosophy, IFIKK, University of Oslo, Postboks 1020 Blindern, 0315 Oslo, Norway}
\email{oystein.linnebo@ifikk.uio.no}
\urladdr{http://oysteinlinnebo.org}
\thanks{The authors are grateful to the Set-theoretic Pluralism network for support provided for their participation at the STP Symposium 2016 in Aberdeen, where this work was initiated. The research of the first author has been supported by grant \#69573-00 47 from the CUNY Research Foundation. Commentary can be made about this article on his blog at %http://jdh.hamkins.org/set-theoretic-potentialism.
\href{http://jdh.hamkins.org/set-theoretic-potentialism}{http://jdh.hamkins.org/set-theoretic-potentialism}.
}
\newtheorem{theorem}{Theorem}
\newtheorem*{maintheorem*}{Main Theorem}
\newtheorem*{maintheorems*}{Main Theorems}
\newtheorem{corollary}[theorem]{Corollary}
\newtheorem*{corollary*}{Corollary}
\newtheorem*{corollaries*}{Corollaries}
\newtheorem*{question*}{Question}
\newtheorem*{questions*}{Questions}
\newtheorem*{mainquestion*}{Main Question} % without numbering
\newtheorem*{openquestion*}{Open Question} % without numbering
\newtheorem{definition}[theorem]{Definition}
\newcommand{\QED}{\end{proof}}
\def\proclaim[#1]{{\bf #1}}
\def\BF#1.{{\bf #1.}}
\def\says#1:#2\par{\item[#1] #2\par}
\newcommand{\Lob}{L\"ob}
\newcommand{\Fraisse}{Fra\"\i ss\'e}
\newcommand{\Godel}{G\"odel}
\newcommand{\Lowe}{L\"owe}
\newcommand{\Levy}{L\'{e}vy}
\newcommand{\Lowenheim}{L\"owenheim}
\newcommand{\Oystein}{{\O}ystein}
\renewcommand{\P}{{\mathbb P}}
\newcommand{\Q}{{\mathbb Q}}
\newcommand{\R}{{\mathbb R}}
\newcommand{\dotminus}{\mathbin{\text{\@dotminus}}}
\newcommand{\@dotminus}{%
  \ooalign{\hidewidth\raise1ex\hbox{.}\hidewidth\cr$\m@th-$\cr}%
}
\newcommand{\of}{\subseteq}
\newcommand{\fo}{\supseteq}
\newcommand{\set}[1]{\{\,{#1}\,\}}
\newcommand{\singleton}[1]{\left\{{#1}\right\}}
\newcommand{\elesub}{\prec}
\newcommand{\restrict}{\upharpoonright} % uses amssymb
\newcommand{\satisfies}{\models}
\DeclareMathOperator{\possible}{\text{\tikz[scale=.6ex/1cm,baseline=-.6ex,rotate=45,line width=.1ex]{\draw (-1,-1) rectangle (1,1);}}}
\DeclareMathOperator{\necessary}{\text{\tikz[scale=.6ex/1cm,baseline=-.6ex,line width=.1ex]{\draw (-1,-1) rectangle (1,1);}}}
\newcommand{\union}{\cup}
\newcommand{\intersect}{\cap}
\newcommand{\smalllt}{\mathrel{\mathchoice{\raise2pt\hbox{$\scriptstyle<$}}{\raise1pt\hbox{$\scriptstyle<$}}{\raise0pt\hbox{$\scriptscriptstyle<$}}{\scriptscriptstyle<}}}
\newcommand{\smallleq}{\mathrel{\mathchoice{\raise2pt\hbox{$\scriptstyle\leq$}}{\raise1pt\hbox{$\scriptstyle\leq$}}{\raise1pt\hbox{$\scriptscriptstyle\leq$}}{\scriptscriptstyle\leq}}}
\newcommand{\boolval}[1]{\mathopen{\lbrack\!\lbrack}\,#1\,\mathclose{\rbrack\!\rbrack}}
\def\[#1]{\boolval{#1}}
\newbox\gnBoxA
\newdimen\gnCornerHgt
\newdimen\gnArgHgt
\def\gcode #1{%
\setbox\gnBoxA=\hbox{$#1$}%
\gnArgHgt=\ht\gnBoxA%
\ifnum     \gnArgHgt<\gnCornerHgt \gnArgHgt=0pt%
\else \advance \gnArgHgt by -\gnCornerHgt%
\fi \raise\gnArgHgt\hbox{\tiny$\ulcorner$} \box\gnBoxA %
\raise\gnArgHgt\hbox{\tiny$\urcorner$}}
\newcommand{\UnderTilde}[1]{{\setbox1=\hbox{$#1$}\baselineskip=0pt\vtop{\hbox{$#1$}\hbox to\wd1{\hfil$\sim$\hfil}}}{}}
\newcommand{\Undertilde}[1]{{\setbox1=\hbox{$#1$}\baselineskip=0pt\vtop{\hbox{$#1$}\hbox to\wd1{\hfil$\scriptstyle\sim$\hfil}}}{}}
\newcommand{\undertilde}[1]{{\setbox1=\hbox{$#1$}\baselineskip=0pt\vtop{\hbox{$#1$}\hbox to\wd1{\hfil$\scriptscriptstyle\sim$\hfil}}}{}}
\newcommand{\UnderdTilde}[1]{{\setbox1=\hbox{$#1$}\baselineskip=0pt\vtop{\hbox{$#1$}\hbox to\wd1{\hfil$\approx$\hfil}}}{}}
\newcommand{\Underdtilde}[1]{{\setbox1=\hbox{$#1$}\baselineskip=0pt\vtop{\hbox{$#1$}\hbox to\wd1{\hfil\scriptsize$\approx$\hfil}}}{}}
\renewcommand{\implies}{\mathrel{\rightarrow}}
\renewcommand{\iff}{\mathrel{\leftrightarrow}}
\newcommand{\Iff}{\mathrel{\Longleftrightarrow}}
\def\<#1>{\left\langle#1\right\rangle}
\newcommand{\Ord}{\mathord{{\rm Ord}}}
\newcommand{\ZFC}{{\rm ZFC}}
\newcommand{\GBC}{{\rm GBC}}
\newcommand{\GCH}{{\rm GCH}}
\newcommand{\DDG}{{\rm DDG}}
\newcommand{\MP}{{\rm MP}}
\newcommand{\PA}{{\rm PA}}
\newcommand{\cell}[1]{\boxit{\hbox to 17pt{\strut\hfil$#1$\hfil}}}
\newcommand{\head}[2]{\lower2pt\vbox{\hbox{\strut\footnotesize\it\hskip3pt#2}\boxit{\cell#1}}}
\newcommand{\boxit}[1]{\setbox4=\hbox{\kern2pt#1\kern2pt}\hbox{\vrule\vbox{\hrule\kern2pt\box4\kern2pt\hrule}\vrule}}
\newcommand{\Col}[3]{\hbox{\vbox{\baselineskip=0pt\parskip=0pt\cell#1\cell#2\cell#3}}}
\newcommand{\tapenames}{\raise 5pt\vbox to .7in{\hbox to .8in{\it\hfill input: \strut}\vfill\hbox to
.8in{\it\hfill scratch: \strut}\vfill\hbox to .8in{\it\hfill output: \strut}}}
\newcommand{\Head}[4]{\lower2pt\vbox{\hbox to25pt{\strut\footnotesize\it\hfill#4\hfill}\boxit{\Col#1#2#3}}}
\newcommand{\Dots}{\raise 5pt\vbox to .7in{\hbox{\ $\cdots$\strut}\vfill\hbox{\ $\cdots$\strut}\vfill\hbox{\
$\cdots$\strut}}}
\newcommand{\df}{\it} % use italic for definition terms. Idea: also use this to create an index of definitions, if MakeIndex is true.
\newcommand\Val{\mathord{\rm Val}}
\renewcommand{\UrlFont}{\sffamily\small} % makes url text smaller (used only in bibliography?)
\addcolon\nolinkurl{#1}}\iffieldundef{eprintclass}{}{\UrlFont{\mkbibbrackets{\thefield{eprintclass}}}}}
\addcolon\nolinkurl{#1}\iffieldundef{eprintclass}{}{\UrlFont{\mkbibbrackets{\thefield{eprintclass}}}}}}
\begin{document}

\begin{abstract}
We analyze the precise modal commitments of several natural varieties of set-theoretic potentialism, using tools we develop for a general model-theoretic account of potentialism, building on those of Hamkins, Leibman and \Lowe~\cite{HamkinsLeibmanLoewe2015:StructuralConnectionsForcingClassAndItsModalLogic}, including the use of buttons, switches, dials and ratchets. Among the potentialist conceptions we consider are: rank potentialism (true in all larger $V_\beta$); Grothendieck-Zermelo potentialism (true in all larger $V_\kappa$ for inaccessible cardinals $\kappa$); transitive-set potentialism (true in all larger transitive sets); forcing potentialism (true in all forcing extensions); countable-transitive-model potentialism (true in all larger countable transitive models of \ZFC); countable-model potentialism (true in all larger countable models of \ZFC); and others. In each case, we identify lower bounds for the modal validities, which are generally either S4.2 or S4.3, and an upper bound of S5, proving in each case that these bounds are optimal. The validity of S5 in a world is a potentialist maximality principle, an interesting set-theoretic principle of its own. The results can be viewed as providing an analysis of the modal commitments of the various set-theoretic multiverse conceptions corresponding to each potentialist account.
\end{abstract}

\maketitle

\noindent
Set-theoretic potentialism is the view in the philosophy of mathematics that the universe of set theory is never fully completed, but rather unfolds gradually as parts of it increasingly come into existence or become accessible to us. On this view, the outer or upper reaches of the set-theoretic universe have merely potential rather than actual existence, in the sense that one can always form additional sets from that realm, as many as desired, but the task is never completed. For example, height potentialism is the view that the universe is never fully completed with respect to height: new ordinals come into existence as the known part of the universe grows ever taller. Width potentialism holds that the universe may grow outwards, as with forcing, so that already-existing sets can potentially gain new subsets in a larger universe. One commonly held view amongst set theorists is height potentialism combined with width actualism, whereby the universe grows only upward rather than outward, and so at any moment the part of the universe currently known to us is a rank initial segment $V_\alpha$ of the potential yet-to-be-revealed higher parts of the universe.\footnote{See e.g.~\cite{Zermelo:1930} (also discussed below),~\cite{Putnam:1967b} and~\cite{Hellman:1989}, and~\cite{Studd_ItConceptBiModal}. } Such a perspective might even be attractive to a Platonistically inclined large-cardinal set theorist, who wants to hold that there are many large cardinals, but who also is willing at any moment to upgrade to a taller universe with even larger large cardinals than had previously been mentioned. Meanwhile, the width-potentialist height-actualist view may be attractive for those who wish to hold a potentialist account of forcing over the set-theoretic universe $V$. On the height-and-width-potentialist view, one views the universe as growing with respect to both height and width.\footnote{See e.g.~\cite{Parsons:1983b} and~\cite{Linnebo:2013-PHS}.} A set-theoretic monist, in contrast, with an ontology having only a single fully existing universe, will be an actualist with respect to both width and height. The second author has described various potentialist views in~\cite{Linnebo:2013-PHS} and~\cite{LinneboShapiro2017:Actual-and-potential-infinity}.

Our project here is to analyze and understand more precisely the modal commitments of various set-theoretic potentialist views. We shall restrict ourselves in this analysis, however, to forms of potentialism based on \emph{classical} modal logic.\footnote{A stricter interpretation of potentialism can be shown to call for intuitionistic logic (without any commitment to the anti-realist assumptions typically used to support this logic)~\cite{LinneboShapiro2017:Actual-and-potential-infinity}. Analogous ideas come up in~\cite{Lear1977-setssem} and~\cite{Tait1998-ZermeloConceptionSets}.} After developing a general model-theoretic account of the semantics of potentialism in section~\ref{Section.semantics-of-potentialism} and providing tools in section~\ref{Section.tools} for establishing both lower and upper bounds on the modal validities for various kinds of potentialist contexts, we shall use those tools in section~\ref{Section.set-theoretic-potentialism} to settle exactly the propositional modal validities for several natural kinds of set-theoretic height and width potentialism.

The various kinds of set-theoretic potentialism validate in each case a specific corresponding modal theory, which expresses the fundamental principles or potentialist philosophy of that conception. Set-theoretic rank-potentialism and Grothendieck-Zermelo potentialism, for example, validate precisely S4.3 and exhibit what could be described as a character of linear inevitability: the central axiom (.3) can be interpreted as a linearity assertion; the finite pre-linear Kripke frames are complete for S4.3; and the worlds in these potentialist systems are linearly ordered. Forcing potentialism and countable-transitive model potentialism, in contrast, exhibit a character of directed convergence, with the validities being exactly S4.2, whose central axiom expresses the idea that any two possibilities can be brought together again in a common further possibility, but without any presumption of linearity. Some systems validate S5, whose central axiom $\possible\necessary\varphi\to\varphi$ expresses maximality or completeness, where any statement that could become permanent has already been made permanent. Meanwhile, the potentialist systems validating only S4, as in \cite{Hamkins:The-modal-logic-of-arithmetic-potentialism, HamkinsWoodin:The-universal-finite-set}, exhibit a far more radical branching character, one in which fundamental bifurcations in possibility are revealed as the universe unfolds in one manner as opposed to another incompatible manner.

Thus, there is a watershed between S4 on the one hand and S4.2 and above on the other. Only in the latter group of theories does the potentialist translation of theorem \ref{Theorem.Potentialist-translation} (and the mirroring theorem of \cite{Linnebo:2013-PHS}) work as it should; this breaks down when the logic is below S4.2. The first author argues in \cite[section~7]{Hamkins:The-modal-logic-of-arithmetic-potentialism} that the convergent forms of potentialism can be seen as implicitly actualist, although the second author counters this with concerns of higher-order logic \cite[section 7]{LinneboShapiro2017:Actual-and-potential-infinity}.

Let us also briefly mention the strong affinities between set-theoretic potentialism and set-theoretic pluralism, particularly with the various set-theoretic multiverse conceptions currently in the literature, such as those of the first author~\cite{GitmanHamkins2010:NaturalModelOfMultiverseAxioms, Hamkins2011:TheMultiverse:ANaturalContext, Hamkins2012:TheSet-TheoreticalMultiverse, Hamkins2014:MultiverseOnVeqL}. Potentialists may regard themselves mainly as providing an account of truth ultimately for a single universe, gradually revealed, the limit of their potentialist system. Nevertheless, the universe fragments of their potentialist account can often naturally be taken as universes in their own right, connected by the potentialist modalities, and in this way, every potentialist system can be viewed as a  multiverse. Indeed, the potentialist systems we analyze in this article---including rank potentialism, forcing potentialism, generic-multiverse potentialism, countable-transitive-model potentialism, countable-model potentialism---each align with corresponding natural multiverse conceptions. Because of this, we take the results of this article as providing not only an analysis of the modal commitments of set-theoretic potentialism, but also an analysis of the modal commitments  of various particular set-theoretic multiverse conceptions. Indeed, one might say that it is possible ({\it ahem}), in another world, for this article to have been entitled, ``The modal logic of various set-theoretic multiverse conceptions.''

\section{The semantics of potentialism}\label{Section.semantics-of-potentialism}

Although we are motivated by the case of set-theoretic potentialism, the potentialist idea itself is far more general, and can be carried out in a general model-theoretic context. For example, the potentialist account of arithmetic is deeply connected with the classical debates surrounding potential as opposed to actual infinity, and indeed, perhaps it is in those classical debates where one finds the origin of potentialism. More generally, one can provide a potentialist account of truth in the context of essentially any kind of structure in any language or theory.

Allow us therefore to lay out a general model-theoretic account of what we call the \emph{semantics of potentialism}.
Suppose that $\mathcal{W}$ is a collection of structures in a common signature $\mathcal{L}$---we shall call them \emph{worlds}---and that $\mathcal{W}$ is equipped with an accessibility relation, making it a Kripke model of $\mathcal{L}$-structures. We say that $\mathcal{W}$ is a {\df potentialist system}, if the accessibility relation is reflexive and transitive and if, furthermore, whenever one world accesses another in $\mathcal{W}$, then the domain of the first world is contained within that of the second. Thus, a potentialist system is a reflexive transitive Kripke model of $\mathcal{L}$-structures, in which the accessibility relation is inflationary in the domains of the structures, so that the individuals of a world continue to exist in any accessed new world. The main idea of potentialism is that any particular world $W$ regards the individuals of $W$ as the actually existing individuals or objects, and those in the larger yet-to-be-accessed worlds have merely a potential existence; but we may regard those potentially existing objects as actually existing, if we should simply move to the context and perspective of those larger worlds.

We shall be principally interested in the case where $\mathcal{L}$ is a first-order language and these are first-order structures, such as in the case of set-theoretic potentialism. Meanwhile, the ideas do readily generalize to nearly any kind of structure, using infinitary or higher-order logics, among others.

An important case of potentialism occurs when the accessibility relation of $\mathcal{W}$ is precisely the substructure relation on the family of structures. In particular, for any class $\mathcal{W}$ of $\mathcal{L}$-structures, we may place the {\df potentialist} accessibility relation on $\mathcal{W}$, by which world $W$ accesses $U$ if and only if $W$ is a substructure of $U$, so the domain of $W$ is contained within that of $U$ and they agree on the atomic facts about individuals in $W$. In this way, any class of $\mathcal{L}$-structures can be viewed as a potentialist system.

The semantics of potentialism for any potentialist system is defined, of course, in the usual Kripkean manner for assertions in the potentialist language $\mathcal{L}^\Diamond$, which augments $\mathcal{L}$ with the modal operators $\possible$ and $\necessary$. Namely, we define recursively what it means to say that an $\mathcal{L}^\Diamond$-formula $\varphi$ is true at a world $W\in\mathcal{W}$ with parameters $a_0,\dots,a_n\in W$, written:
 $$W\satisfies_{\mathcal{W}}\varphi(a_0,\dots,a_n).$$
Specifically, an atomic assertion $\varphi(a_0,\dots,a_n)$ is true at world $W$ in $\mathcal{W}$ for parameters $a_0,\dots,a_n\in W$ just in case the corresponding atomic fact is true in $W$ in the usual Tarskian sense of $\mathcal{L}$-satisfaction. The truth definition extends through Boolean combinations in the usual Tarskian manner, so that $W\satisfies_{\mathcal{W}}(\varphi\wedge\psi)(a_0,\dots,a_n)$ just in case both $\varphi(a_0,\dots,a_n)$ and $\psi(a_0,\dots,a_n)$ are true separately at world $W$ in $\mathcal{W}$; and $\neg\varphi(a_0,\dots,a_n)$ is true at world $W$ just in case $\varphi(a_0,\dots,a_n)$ is not true at $W$. Quantifiers are handled by quantifying only over the current actual individuals, that is, over the objects of the current possible world, so that $W\satisfies_{\mathcal{W}}\exists x\, \varphi(x,a_0,\dots,a_n)$ just in case there is some $a\in W$ for which $W\satisfies_{\mathcal{W}}\varphi(a,a_0,\dots,a_n)$. The modal semantics are determined by the potentialist accessibility relation, so that $W\satisfies_{\mathcal{W}}\possible\varphi(a_0,\dots,a_n)$ if and only if $W$ can access some world $U\in\mathcal{W}$ for which $U\satisfies_{\mathcal{W}}\varphi(a_0,\dots,a_n)$. Similarly, $\necessary\varphi(a_0,\dots,a_n)$ is true at world $W$ just in case $\varphi(a_0,\dots,a_n)$ is true at all such accessed worlds $U$ in $\mathcal{W}$. Notice that since the domains of the possible worlds are inflationary along the accessibility relation, there is no need to worry about parameters ceasing to exist in connection with the clauses just mentioned for the modal operators; thus, we avoid what can otherwise often be a troublesome matter for modal semantics. In this way, any collection $\mathcal{W}$ of $\mathcal{L}$-structures in the same language determines a corresponding potentialist semantics in the modal-equipped language $\mathcal{L}^\Diamond$ using the potentialist accessibility relation.

For sufficiently coherent collections of structures $\mathcal{W}$, the potentialist approach to truth can interact in a meaningful way with the usual Tarskian account of truth in a kind of limit structure. Namely, we define that a potentialist system $\mathcal{W}$ of $\mathcal{L}$-structures provides a {\df potentialist account} of a particular $\mathcal{L}$-structure $M$, if every world $W$ in $\mathcal{W}$ is a substructure of $M$ and furthermore, for every such $W$ and every individual $a\in M$, there is a world $U\in\mathcal{W}$ accessed by $W$ with $a\in U$. In other words, any world $W$ in $\mathcal{W}$ can be extended so as to accommodate any given individual of $M$. This can be seen as a weak form of directedness.\footnote{Although we have in mind the case of first-order structures and a first-order language, if one is considering structures in a higher-order logic or an infinitary logic allowing formulas with infinitely many free variables, one will want to accommodate larger collections of individuals. For example, in $L_{\omega_1,\omega_1}$ logic, one would want every world in a potentialist account to accommodate any countably many individuals in a larger world; and in a higher-order logic, one may want full directedness, so that any two worlds from $\mathcal{W}$ have a common accessible extension in $\mathcal{W}$.} We shall also say in this case that $\mathcal{W}$ {\df converges} to $M$ or that $M$ is the {\df limit} of $\mathcal{W}$. When $\mathcal{W}$ has a limit structure $M$, the worlds of $\mathcal{W}$ must all agree with $M$ on the atomic truths and therefore also with each other. Since any given world in $\mathcal{W}$ can be enlarged so as to encompass any given individual of $M$, it follows that the domain of $M$ is the union of the domains of the worlds in $\mathcal{W}$. So the limit structure $M$ is completely determined by the collection of structures $\mathcal{W}$ providing the potentialist account of it. Clearly, a given structure might have many different potentialist accounts.

Without referring directly to any limit structure, we say that a collection $\mathcal{W}$ of $\mathcal{L}$-structures is {\df coherent}, if the structures in $\mathcal{W}$ agree on atomic truth and every structure in $\mathcal{W}$ can be extended in $\mathcal{W}$ so as to accommodate any individual arising in any of the other structures. This is equivalent, of course, to saying that $\mathcal{W}$ converges to a limit structure $M$, which is simply the union of the structures of $\mathcal{W}$. For example, in the case of potential infinity, a coherent collection of $\mathcal{L}$-structures can be regarded as a collection of finite approximations of some potentially infinite limit structure. More generally, a coherent collection of structures can be regard as an approximation of some incompletable limit structure in terms of its completable parts.

For any assertion $\psi$ in the underlying language $\mathcal{L}$, consider the {\df potentialist translation} $\psi^\Diamond$, which will be the assertion in the language $\mathcal{L}^\Diamond$ arising when one replaces every instance of $\exists x$ with $\possible\exists x$ and every instance of $\forall x$ with $\necessary\forall x$. It is through this translation, in light of theorem~\ref{Theorem.Potentialist-translation}, that the potentialist can refer to truth in the limit structure, without having that structure explicitly as a part of his or her ontology~\cite{Linnebo:2013-PHS}.

\begin{theorem}\label{Theorem.Potentialist-translation}
 If $\mathcal{W}$ provides a potentialist account of a structure $M$, then truth in $M$ is equivalent to potentialist truth at the worlds of $\mathcal{W}$. Namely, for any $\mathcal{L}$-formula $\psi$ and any $a_0,\dots,a_n\in M$, we have:
  $$M\satisfies\psi(a_0,\dots,a_n)\qquad\Iff\qquad W\satisfies_{\mathcal{W}}\psi^\Diamond(a_0,\dots,a_n),$$
 for any $W\in\mathcal{W}$ in which the individuals $a_0,\dots,a_n$ exist.
\end{theorem}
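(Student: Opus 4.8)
The plan is to proceed by induction on the complexity of the $\mathcal{L}$-formula $\psi$, establishing the biconditional \emph{uniformly at every world at once}: for all $W\in\mathcal{W}$ and all parameters $a_0,\dots,a_n\in W$, that $M\satisfies\psi(a_0,\dots,a_n)$ holds if and only if $W\satisfies_{\mathcal{W}}\psi^\Diamond(a_0,\dots,a_n)$. It is crucial that the inductive hypothesis quantify over all worlds in this way, because the modal clauses evaluate the subformula $\varphi^\Diamond$ of a formula at worlds $U$ other than $W$. The inflationary-domain condition of a potentialist system will ensure that whenever $W$ accesses $U$ the parameters $\bar a$ persist into $U$, so that the hypothesis remains applicable there.

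For the base case, when $\psi$ is atomic, note that $\psi^\Diamond=\psi$, and since each world $W$ is a substructure of $M$ with $\bar a\in W$, the atomic fact $\psi(\bar a)$ holds in $W$ exactly when it holds in $M$; thus $M\satisfies\psi(\bar a)$, $W\satisfies\psi(\bar a)$, and $W\satisfies_{\mathcal{W}}\psi^\Diamond(\bar a)$ all coincide. The Boolean steps are routine: since the potentialist translation commutes with the connectives, $(\neg\varphi)^\Diamond=\neg\varphi^\Diamond$ and $(\varphi\wedge\chi)^\Diamond=\varphi^\Diamond\wedge\chi^\Diamond$, and so these cases follow at once from the inductive hypothesis.

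The crux will be the existential step, where $\psi=\exists x\,\varphi$ and hence $\psi^\Diamond=\possible\exists x\,\varphi^\Diamond$. First I would treat the forward direction: assuming $M\satisfies\exists x\,\varphi(x,\bar a)$, fix a witness $b\in M$ with $M\satisfies\varphi(b,\bar a)$, and here invoke the defining property of a potentialist account to obtain a world $U$ accessed by $W$ with $b\in U$; the parameters $\bar a$ also lie in $U$ by inflation. The inductive hypothesis at $U$ then gives $U\satisfies_{\mathcal{W}}\varphi^\Diamond(b,\bar a)$, whence $U\satisfies_{\mathcal{W}}\exists x\,\varphi^\Diamond(x,\bar a)$ and so $W\satisfies_{\mathcal{W}}\possible\exists x\,\varphi^\Diamond(x,\bar a)$. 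For the converse, assuming $W\satisfies_{\mathcal{W}}\possible\exists x\,\varphi^\Diamond(x,\bar a)$, fix an accessed $U$ and a witness $b\in U$ with $U\satisfies_{\mathcal{W}}\varphi^\Diamond(b,\bar a)$; the inductive hypothesis at $U$ yields $M\satisfies\varphi(b,\bar a)$, and since $b\in U\of M$ we obtain $M\satisfies\exists x\,\varphi(x,\bar a)$. The universal step, with $\psi=\forall x\,\varphi$ and $\psi^\Diamond=\necessary\forall x\,\varphi^\Diamond$, is entirely dual: the forward direction now rests on the substructure condition, checking $M\satisfies\varphi(b,\bar a)$ for each $b$ drawn from an accessed $U\of M$, while the converse uses the accommodation property to reach an arbitrary individual $b\in M$ inside some accessed world.

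I expect the principal difficulty to be organizational rather than mathematical: one must phrase the inductive hypothesis so that it holds at all worlds simultaneously, and then track precisely which clause of the hypothesis ``$\mathcal{W}$ provides a potentialist account of $M$'' is deployed at each step. Concretely, the accommodation condition---that any individual of $M$ can be housed in some accessed world---is used exactly in the forward existential step and the converse universal step, whereas the requirement that each world be a substructure of $M$ underwrites the atomic base case together with the converse existential and forward universal steps; the inflationary-domain condition operates silently throughout to guarantee that the parameters survive passage to the accessed world.
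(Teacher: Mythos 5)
Your proposal is correct and follows essentially the same route as the paper's own proof, which is exactly this induction on formulas, with the accommodation clause of the potentialist-account definition handling the existential step and the substructure condition handling the atomic and universal cases; you have merely spelled out the uniform-over-all-worlds form of the inductive hypothesis that the paper leaves implicit.
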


\begin{proof}
This is proved by a simple induction on formulas $\psi$ in the language of $\mathcal{L}$. The claim is immediate for atomic assertions, since every $W\in\mathcal{W}$ is a submodel of $M$, and the case of Boolean connectives is also easy. For quantifiers, the point is that asserting $\exists x$ from the perspective of $M$ is exactly asserting $\possible\exists x$ from the perspective of $W$ in the potentialist semantics, because the witness $x$ might not yet exist in $W$, but if it does exist in $M$ then it will exist in some $U$ extending $W$ by the definition of what it means for $\mathcal{W}$ to provide a potentialist account of $M$. Similarly, asserting $\forall x$ in $M$ amounts to $\necessary\forall x$ in $W$ with respect to the potentialist semantics.
\end{proof}

Let us now discuss what it means for a modal assertion $\varphi(p_0,\dots,p_n)$ to be valid with respect to a potentialist system $\mathcal{W}$. In a general case, this concept applies to assertions $\varphi(p_0,\dots,p_n)$ in the language $\mathcal{L}^\Diamond(p_0,p_1,\dots)$, which expands $\mathcal{L}^\Diamond$ with propositional variables $p_0,p_1,\dots$, treated syntactically as atomic formulas. A central case for us, however, will be the validities $\varphi(p_0,\dots,p_n)$ that are expressible in the more restricted language of propositional modal logic, using only the propositional variables, logical connectives and modal operators (but no quantifiers, relations or terms). For example, we shall consider the well-known modal theories S4, S4.2, S4.3 and S5, which are expressed in propositional modal logic.

A modal assertion $\varphi(p_0,\dots,p_n)$ is {\df valid} at a world $W$ in $\mathcal{W}$ for a certain class of assertions, if all the resulting substitution instances $\varphi(\psi_0,\dots,\psi_n)$, where assertion $\psi_i$ from the allowed class is substituted for the propositional variable $p_i$, are true at $W$. For example, we shall consider cases where a modal assertion $\varphi(p_0,\dots,p_n)$ is valid for sentences in the underlying language $\mathcal{L}$, or in the potentialist language $\mathcal{L}^\Diamond$, or in these languages with parameters allowed from $W$. Let us denote by $\Val_\mathcal{W}(W,\mathcal{L})$ the set of modal propositional validities at $W$ in $\mathcal{W}$ with respect to assertions from the language $\mathcal{L}$, not allowing extra parameters from $W$; and we shall write $\Val_\mathcal{W}(W,\mathcal{L}_W)$ for the corresponding set of validities with respect to $\mathcal{L}$-assertions, where now parameters from $W$ are allowed. Clearly, it is a more severe requirement generally for a modal formula to be valid with respect to a larger collection of assertions, and so:
\begin{equation}
    \mathcal{L} \subseteq \mathcal{L}' \ \Longrightarrow \ \Val_\mathcal{W}(W, \mathcal{L}') \subseteq \Val_\mathcal{W}(W, \mathcal{L}) \notag
\end{equation}
In particular, allowing parameters results in possibly fewer validities:
    $$\Val(W,\mathcal{L}_W)\ \of\ \Val(W,\mathcal{L}).$$
We say that a propositional modal assertion $\varphi$ is {\df valid in} $\mathcal{W}$ for substitution instances from $\mathcal{L}$ if it is valid at every world of $\mathcal{W}$. Let $\Val_\mathcal{W}(\mathcal{L})$ be the resulting set of validities. It can happen that the worlds of $\mathcal{W}$ do not all exhibit the same validities, and in this case, some individual worlds will exhibit more validities than the system does as a whole. For example, in set-theoretic rank potentialism (see \S\ref{Subsection.Rank-potentialism}), the modal assertions valid at all worlds are precisely those in S4.3, but individual worlds can validate S5, which goes beyond S4.3.

In the potentialist semantics, it is natural to consider the {\df converse Barcan} formula scheme $\necessary\forall x\, p\implies\forall x\necessary p$, having substitution instances
 $$\necessary\forall x\, \psi(x)\implies\forall x\necessary \psi(x).$$
The following theorem lays out some easy potentialist validities, depending in some cases on the nature of $\mathcal{W}$.

\goodbreak
\begin{theorem}\label{Theorem.lower-bounds}
Suppose that $\mathcal{W}$ is any potentialist system of structures in a common language $\mathcal{L}$.
  \begin{enumerate}
    \item The modal theory S4 is valid at every world of $\mathcal{W}$.
    \item The converse Barcan formula is valid at every world of $\mathcal{W}$.
    \item If the accessibility relation of $\mathcal{W}$ is directed, then the modal theory S4.2 is valid at every world of $\mathcal{W}$.
    \item If the accessibility relation of $\mathcal{W}$ is linearly ordered, then the modal theory S4.3 is valid at every world of $\mathcal{W}$.
  \end{enumerate}
In each case, the validities hold for all assertions in $\mathcal{L}^\Diamond$, with parameters.
\end{theorem}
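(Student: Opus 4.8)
The plan is to reduce parts (1), (3), and (4) to the standard Kripke soundness theorems and to argue (2) directly. Write $W\leq U$ to mean that $W$ accesses $U$ in $\mathcal{W}$; by hypothesis $\leq$ is reflexive and transitive, and $W\leq U$ implies $\dom(W)\of\dom(U)$. Fix a world $W$ and parameters $\bar a=a_0,\dots,a_n\in W$, and fix a substitution instance replacing each propositional variable $p_i$ by an $\mathcal{L}^\Diamond$-formula $\psi_i(\bar a)$. The key observation is that this substitution induces a \emph{propositional} Kripke model on the cone $\mathcal{W}_{\geq W}=\{U\in\mathcal{W}\st W\leq U\}$: since the domains are inflationary, $\bar a$ survives into every $U\geq W$, so each $\psi_i(\bar a)$ has a well-defined truth value there, and we set $V(p_i)=\{U\geq W\st U\satisfies_\mathcal{W}\psi_i(\bar a)\}$. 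A routine induction on the propositional-modal structure of an assertion $\varphi(p_0,\dots,p_n)$ --- treating each $\psi_i(\bar a)$ as an atom, and using that the clauses for $\possible$, $\necessary$ and the Boolean connectives agree in the two semantics --- then shows that the instance $\varphi(\psi_0,\dots,\psi_n)$ is true at any $U\geq W$ under the potentialist semantics exactly when $\varphi$ is true at $U$ in the Kripke model $(\mathcal{W}_{\geq W},\leq,V)$. Transitivity of $\leq$ is what guarantees that $\mathcal{W}_{\geq W}$ is closed upward, so that the modal clauses never refer outside it.

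Granting this reduction, the three cases follow from the corresponding frame-soundness facts. For (1), the frame $(\mathcal{W}_{\geq W},\leq)$ is a preorder, and S4 is sound over all preorders: reflexivity validates $\necessary p\implies p$, transitivity validates $\necessary p\implies\necessary\necessary p$, while $K$ and necessitation hold over every frame. For (3), if $\leq$ is directed then so is each cone, and the characteristic axiom $\possible\necessary p\implies\necessary\possible p$ is sound over directed preorders: if some $U\geq W$ has $U\satisfies\necessary p$, then given any $V\geq W$ we pick a common upper bound $Z\geq U,V$, whence $Z\satisfies p$ and so $V\satisfies\possible p$. For (4), if $\leq$ is linear then each cone is weakly connected, and the $(.3)$ linearity axiom is sound there: were there $U,V\geq W$ with $U\satisfies\necessary p\wedge\neg q$ and $V\satisfies\necessary q\wedge\neg p$, comparability of $U$ and $V$ would force one of $p$, $q$ to hold at the larger world, a contradiction. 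In each case the characteristic axiom is valid throughout the relevant frame class; since $K$, $T$, $4$ are too, and necessitation, modus ponens and uniform substitution preserve validity over a frame class, every theorem of S4 (resp.\ S4.2, S4.3) is true at every point of every such model, and applying this at the point $W$ of $(\mathcal{W}_{\geq W},\leq,V)$ yields the claimed validity at $W$.

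Part (2) is genuinely quantificational and lies outside the propositional reduction, so I would argue it directly. Suppose $W\satisfies_\mathcal{W}\necessary\forall x\,\psi(x)$ and fix any $a\in W$; the claim is $W\satisfies_\mathcal{W}\necessary\psi(a)$. For any $U\geq W$ the hypothesis gives $U\satisfies_\mathcal{W}\forall x\,\psi(x)$, and since $a\in W\of\dom(U)$ we may instantiate to get $U\satisfies_\mathcal{W}\psi(a)$; as $U\geq W$ was arbitrary this yields $\necessary\psi(a)$ at $W$, hence $W\satisfies_\mathcal{W}\forall x\,\necessary\psi(x)$. The inflationary-domain hypothesis is used in an essential way: it is exactly what lets the witness $a$ persist into every accessed world. (The Barcan formula itself, by contrast, would require decreasing domains and generally fails.)

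The reasoning is almost entirely routine, being the soundness half of the standard correspondence between frame conditions and the axioms $T$, $4$, $(.2)$, $(.3)$. The only point requiring real care is the reduction lemma of the first paragraph: one must check both that the substituted formulas $\psi_i(\bar a)$ --- which may themselves contain modalities and quantifiers --- remain meaningful at every world of the cone (this is where inflationary domains enter even for parts (1), (3), and (4), not merely (2)), and that \emph{directed} and \emph{linearly ordered} are matched to the correct axioms $(.2)$ and $(.3)$. No genuine obstacle beyond this bookkeeping is expected.
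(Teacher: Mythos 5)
Your proposal is correct and follows essentially the same route as the paper, which dismisses the result as ``a standard elementary exercise in Kripke semantics'' and cites exactly the same four reasons: reflexivity and transitivity for S4, inflationary domains for the converse Barcan formula, directedness for the $(.2)$ axiom, and linearity for the $(.3)$ axiom. Your write-up simply supplies the details the paper leaves to the reader (the cone reduction to a propositional Kripke model and the frame-soundness verifications), and the only cosmetic difference is that you verify $(.3)$ in the form $\necessary(\necessary p\to q)\vee\necessary(\necessary q\to p)$ rather than the paper's $(\possible p\wedge\possible q)\implies\possible[(p\wedge\possible q)\vee(q\wedge\possible p)]$, which is equivalent over S4.
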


\begin{proof}
This is a standard elementary exercise in Kripke semantics, and so we leave most of the details to the reader. Statement 1 holds because the potentialist accessibility relation is reflexive and transitive; statement 2 holds because the domains of the worlds increase monotonically with the accessibility relation; statement 3 holds because the directedness of the accessibility relation leads immediately to the validity of the .2 axiom $\possible\necessary p\implies\necessary\possible p$; and statement 4 holds because linearity of the accessibility relation causes the validity of the .3 axiom $(\possible p\wedge\possible q)\implies\possible[(p\wedge\possible q)\vee(q\wedge\possible p)]$.
\end{proof}

We should like to emphasize that theorem~\ref{Theorem.lower-bounds} concerns only \emph{lower} bounds on the modal validities of a potentialist system, and it is possible in general for a potentialist system $\mathcal{W}$ to exhibit more validities than would be indicated by this theorem, as we shall discuss at length for various systems later in this article.

\bigskip\goodbreak\medskip\bigskip

\section{Some tools for studying modal validities} \label{Section.tools}

So let us now turn to the harder question of \emph{upper} bounds on the modal validities; we should like to identify useful criteria for establishing that the valid principles of potentialism for a given potentialist system $\mathcal{W}$ are contained in a certain modal theory. Using these criteria, we shall be able to determine in many cases the exact collection of modal validities for a given potentialist account. Our analysis will make use of the tools developed in~\cite{HamkinsLeibmanLoewe2015:StructuralConnectionsForcingClassAndItsModalLogic,HamkinsLoewe2008:TheModalLogicOfForcing,HamkinsLoewe2013:MovingUpAndDownInTheGenericMultiverse} for the case of the modal logic of forcing; the forcing modality was introduced in~\cite{Hamkins2003:MaximalityPrinciple}. One of the important insights of that earlier work was that the existence of certain kinds of control statements, which are easy to work with and recognize---switches, buttons, dials and ratchets---enable one to place definite upper bounds on the modal validities of a world.

To begin with an easy case, following~\cite{HamkinsLoewe2008:TheModalLogicOfForcing}, we say that an assertion $s$ is a {\df switch} in a Kripke model $\mathcal{W}$, if both $\possible s$ and $\possible\neg s$ are true at every world. A little more generally, $s$ is a switch at a particular world $W$, if $\possible s$ and $\possible\neg s$ hold in all the worlds one can reach from $W$. Thus, a switch is an assertion that can be successively turned on and off as much as one likes, by accessing new worlds in the Kripke model. A family of switches $\<s_0,s_1,\ldots>$ is {\df independent}, if one can always flip the truth values of any finitely many of the switches so as to realize any desired finite pattern of truth. If a world $W$ has independent switches, then they remain independent in any accessed world.

\begin{theorem}\label{Theorem.Switches-S5}
  If $\mathcal{W}$ is Kripke model and world $W$ admits arbitrarily large finite collections of independent switches, then the propositional modal assertions valid at $W$ are contained in the modal theory S5.
\end{theorem}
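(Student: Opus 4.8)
The plan is to argue contrapositively: I will show that any propositional modal assertion $\varphi(p_0,\dots,p_n)$ that is not a theorem of S5 fails to be valid at $W$, by producing a substitution instance that is false at $W$ itself. All the substituted formulas will be built out of a single sufficiently large finite block of independent switches at $W$, whose existence is exactly the hypothesis of the theorem.

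First I would invoke the standard completeness theorem for S5: it is sound and complete with respect to finite clusters, that is, finite Kripke frames $C=\{w_0,\dots,w_{k-1}\}$ in which every world accesses every world. Since $\varphi$ is not an S5 theorem, there is such a cluster $C$ together with a valuation $V$ of the propositional variables at which $\varphi$ fails at some world, which I may take to be $w_0$; thus $C,w_0\satisfies\neg\varphi$. Now choose $m$ with $2^m\geq k$ and fix a family of independent switches $s_0,\dots,s_{m-1}$ at $W$. For each pattern $\vec b\in\{0,1\}^m$ let $\sigma_{\vec b}$ be the conjunction asserting, for each $l<m$, the literal $s_l$ when $b_l=1$ and $\neg s_l$ when $b_l=0$; then at every world reachable from $W$ exactly one $\sigma_{\vec b}$ holds, namely the one recording that world's switch pattern. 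Let $\vec b^W$ be the switch pattern of $W$ itself (the $s_l$ have definite truth values there), and fix a surjection $f\colon\{0,1\}^m\to C$ with $f(\vec b^W)=w_0$; this is possible because $2^m\geq k$. Finally define the substitution $\psi_i=\bigvee\{\sigma_{\vec b}:f(\vec b)\in V(p_i)\}$ for each $i$.

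The crux is then a single induction on the subformulas $\theta$ of $\varphi$, establishing that for every world $U$ reachable from $W$, with switch pattern $\vec b$, one has $U\satisfies_{\mathcal{W}}\theta(\psi_0,\dots,\psi_n)\Iff C,f(\vec b)\satisfies\theta$. The atomic case holds by the definition of $\psi_i$, and the Boolean cases are immediate. The modal case is where the hypotheses do their work: by independence, from $U$ one can reach a world of any prescribed pattern $\vec c$, and by the inductive hypothesis the truth of $\theta(\vec\psi)$ at such a world depends only on $\vec c$, being equivalent to $C,f(\vec c)\satisfies\theta$; since $f$ is surjective and $C$ is a full cluster in which every world sees every world, $U\satisfies_{\mathcal{W}}\possible\theta(\vec\psi)$ reduces exactly to $C,f(\vec b)\satisfies\possible\theta$. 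Applying the induction to $\theta=\varphi$ at $U=W$, whose pattern $\vec b^W$ satisfies $f(\vec b^W)=w_0$, yields $W\satisfies_{\mathcal{W}}\varphi(\vec\psi)\Iff C,w_0\satisfies\varphi$, and the right-hand side is false. Hence $\varphi$ is not valid at $W$, so $\Val_{\mathcal{W}}(W)\of\text{S5}$.

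I expect the main obstacle to be the modal step of the induction, specifically matching the full accessibility of the cluster $C$ with the reachability of all switch patterns from $U$: one must verify both that every pattern reachable from $U$ corresponds via $f$ to some world of $C$ (so that $\possible$ never overshoots what S5 allows) and, conversely, that surjectivity of $f$ lets every cluster world be realized by an accessible pattern. The second delicate point worth flagging is the placement of the failing world: because unmodalized top-level occurrences of the propositional variables in $\varphi$ are sensitive to the valuation at the evaluation world itself, it is essential to have arranged $f(\vec b^W)=w_0$, which is precisely why I read off $W$'s own switch pattern before defining $f$.
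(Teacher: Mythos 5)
Your proposal is correct and follows essentially the same route as the paper's proof: both invoke completeness of S5 for finite total-accessibility frames, partition the worlds reachable from $W$ by their switch patterns, anchor $W$'s own pattern to the failing cluster world, and run the same induction on subformulas. The only cosmetic difference is that you map the $2^m$ patterns onto the cluster by a surjection, whereas the paper duplicates cluster worlds to make their number exactly a power of two.
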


\noindent In particular, if the switches work throughout $\mathcal{W}$, as they do in many of our applications, then the validities of every world of $\mathcal{W}$ are contained within S5.

\begin{proof}
This argument follows the main idea of~\cite[theorem~10]{HamkinsLeibmanLoewe2015:StructuralConnectionsForcingClassAndItsModalLogic}, using the labeling idea of~\cite[lemma~9]{HamkinsLeibmanLoewe2015:StructuralConnectionsForcingClassAndItsModalLogic} and going back to the methods of~\cite[theorem~6]{HamkinsLoewe2008:TheModalLogicOfForcing}. Those results were stated specifically for the case of forcing and models of set theory, but the point here is that the same idea works generally in any Kripke model. So we refer the reader to that presentation, but let us also briefly explain the details. Suppose that $\mathcal{W}$ is a Kripke model which admits arbitrarily large finite collections of independent switches at world $W$. Let us consider $W$ as the initial world of $\mathcal{W}$ and assume without loss that every world of $\mathcal{W}$ can be reached from $W$ by successive applications of the accessibility relation. If $\varphi(p_0,\dots,p_n)$ is any propositional modal assertion not in S5, then it is known to fail at a world $w$ in some finite propositional Kripke model $M$ whose accessibility relation is the complete binary relation (all worlds access all worlds). By duplicating some worlds of $M$, if necessary, we may assume that the number of worlds is a power of two: $\<w_j\mid j<2^m>$. Let $\<s_k\mid k<m>$ be an independent family of $m$ switches for $\mathcal{W}$. We shall simulate a copy of $M$ inside $\mathcal{W}$ as follows. For each $j<2^m$, let $\bar s_j$ be the Boolean combination of the switches $s_k$ that accords with the same pattern as the binary bits of the number $j$. We associate each world $w_j$ in $M$ with the worlds $W$ in $\mathcal{W}$ for which $\bar s_j$ holds. We may assume that world $W$ in $\mathcal{W}$ is associated with world $w$ in $M$. Since the switch patterns $\bar s_j$ are mutually exclusive and exhaustive, this association amounts to a partition of the worlds of $\mathcal{W}$, with one piece of the partition associated with each world $w_j$ in $M$. For each propositional variable $p$ appearing in $\varphi$, let $\psi_p=\bigvee\set{\bar s_j\mid (M,w_j)\satisfies p}$. Thus, $\psi_p$ is true in a world $U\in\mathcal{W}$ just in case $p$ is true in the world $u$ of $M$ with which $U$ is associated. One may now prove by induction on formulas $\phi$ in propositional modal logic that
 $$U\satisfies_{\mathcal{W}}\phi(\psi_{p_0},\dots,\psi_{p_n})\qquad\Iff\qquad(M,u)\satisfies\phi(p_0,\dots,p_n),$$
whenever $u$ is associated with $U$, or in other words, whenever the pattern of switches $\bar s_j$ is true in $W$, where $w=w_j$. Since $\varphi$ fails at a world of $M$, we have therefore found a failing substitution instance of $\varphi$ in $\mathcal{W}$.
\end{proof}

For a related formulation, let us say that a (possibly infinite) list of statements $d_0,d_1,d_2,\dots$ is a {\df dial} in a Kripke model $\mathcal{W}$, if every world in $\mathcal{W}$ satisfies exactly one of the statements $d_i$ and furthermore, every world can access another world with any prescribed dial value. So you can adjust the dial as you like. If a world satisfies $d_i$, then we shall say that the dial value is $i$ in that world. The following argument shows how to translate between dials and independent switches. Notice that from any larger dial or from an infinite dial, we can construct smaller dials of any given size, simply by keeping any desired fewer finite number of dial statements and adding the statement that none of them holds.

\begin{theorem}\label{Theorem.Switches-iff-dials}
 A Kripke model $\mathcal{W}$ admits arbitrarily large finite families of independent switches if and only if it admits arbitrarily large finite dials.
\end{theorem}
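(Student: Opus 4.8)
The plan is to prove both implications by means of the binary correspondence between a family of $m$ switches and a dial with $2^m$ positions---precisely the encoding already deployed in the proof of Theorem~\ref{Theorem.Switches-S5}.

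For the forward implication, I would suppose that $\mathcal{W}$ admits arbitrarily large finite families of independent switches and fix a target $n$. Choosing $m$ with $2^m\geq n$ and an independent family $\langle s_k\mid k<m\rangle$, I would form, for each $j<2^m$, the Boolean combination $\bar s_j$ of the $s_k$ dictated by the binary digits of $j$, exactly as in Theorem~\ref{Theorem.Switches-S5}. The statements $\langle \bar s_j\mid j<2^m\rangle$ then constitute a dial: every world satisfies exactly one $\bar s_j$, since the definite truth values of the switches there pin down a unique bit pattern, and from any world one can access a world realizing any prescribed $\bar s_j$, this being exactly what independence of the $s_k$ provides. This is a dial of size $2^m\geq n$, so $\mathcal{W}$ admits arbitrarily large finite dials (and one may trim to a dial of size exactly $n$ using the reduction remark preceding the theorem).

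For the converse, I would suppose $\mathcal{W}$ admits arbitrarily large finite dials and fix $m$. Using the reduction remark, I would extract a dial $\langle d_j\mid j<2^m\rangle$ of size exactly $2^m$ and identify each index $j$ with its string of $m$ binary digits. For each $k<m$ I would set
$$s_k=\bigvee\set{d_j\mid \text{the }k\text{-th bit of }j\text{ is }1}.$$
Each $s_k$ is a switch, since from any world one can dial to a value $j$ whose $k$-th bit is $1$ (making $s_k$ true) and to one whose $k$-th bit is $0$ (making it false); and the family is independent, since any desired pattern on any finite subset of the $s_k$ is realized by dialing to a value $j$ agreeing with that pattern in the relevant bits, which the dial property permits. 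Thus $\langle s_k\mid k<m\rangle$ is an independent family of $m$ switches, and since $m$ was arbitrary we are done.

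I expect this argument to be essentially routine, the only genuine content being that the two defining features of a dial---mutual exclusivity together with exhaustiveness, and accessibility of every value---match the two demands on independent switches, namely individual toggleability and joint realizability of finite patterns. The main (minor) obstacle is simply the bookkeeping of reducing an arbitrary finite dial down to size exactly $2^m$ via the stated remark, so that the clean binary encoding applies without remainder.
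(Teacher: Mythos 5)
Your proposal is correct and follows essentially the same binary-encoding argument as the paper: switch patterns become dial positions and dial positions are grouped by bits to recover switches. The only cosmetic difference is that you trim the dial to size exactly $2^m$ before extracting switches, whereas the paper works directly with any dial of size $n\geq 2^m$; both handle the bookkeeping equally well.
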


\begin{proof}
If $\<s_i\mid i<m>$ is an independent family of switches, then for each $j<2^m$, let $d_j$ be the assertion that the switch pattern of the $s_i$ conforms exactly with the binary digits of $j$. Since every world exhibits some unique pattern for the switches, these form a mutually exclusive partition of truth; and since the switches are independent, any world can access another world realizing any given dial value $d_j$. Conversely, if $\<d_j\mid j<n>$ is a dial with $2^m\leq n$, then let $s_i$ assert that one of the $d_j$'s holds, where the $i^{\rm th}$ bit of $j$ is $1$. These are independent switches, precisely because any desired dial value is possible.
\end{proof}

Let us discuss a few more types of control statements. Following~\cite{HamkinsLoewe2008:TheModalLogicOfForcing}, a {\df button} in any Kripke model $\mathcal{W}$ is a statement $b$ such that $\possible\necessary b$ is true at every world. The button is {\df pushed} at a world if $\necessary b$ holds at that world, and otherwise unpushed. A {\df pure} button is a button $b$ for which $b\to\necessary b$ is true at every world, so that pushing the button is the same as making it become true. In S4, if $b$ is an unpushed button, then $\necessary b$ is an unpushed pure button. A family of buttons and switches is {\df independent} in a Kripke model $\mathcal{W}$, if there is a world at which the buttons are unpushed, and every world $W$ accesses a world $U$ in which any additional button may be pushed, without pushing any other as-yet unpushed buttons from the family, while also setting any finitely many of the switches so as to have any desired pattern in $U$. And similarly with dials.

\begin{theorem}\label{Theorem.Buttons+switches-S4.2}
  If $\mathcal{W}$ is a Kripke model that admits arbitrarily large finite families of independent buttons and switches, or independent buttons independent of a dial, then the propositional modal validities of $\mathcal{W}$ are contained in S4.2. The validities of any particular world in which the buttons are not yet pushed are contained in S4.2, and in any case, are contained in S5.
\end{theorem}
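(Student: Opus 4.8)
The plan is to adapt the labeling argument of Theorem~\ref{Theorem.Switches-S5}, now using the buttons to manufacture a partial-order skeleton and the switches to fill in the horizontal freedom, so that an arbitrary finite directed frame can be simulated inside $\mathcal{W}$. First I would dispose of the alternative hypothesis: by Theorem~\ref{Theorem.Switches-iff-dials}, an independent family of buttons together with a dial yields, for any prescribed finite size, an independent family of buttons and switches, so it suffices to treat the case of independent buttons and switches. The completeness-theoretic input I would invoke is that S4.2 is complete with respect to the class of finite reflexive, transitive, directed Kripke frames (finite directed preorders): if $\varphi(p_0,\dots,p_n)$ is a propositional modal assertion not in S4.2, then $\varphi$ fails at some world $w$ of such a finite frame $M$, and by passing to the subframe generated by $w$ we may assume that $M$ is rooted at $w$, with $w$ accessible-below every world of $M$.

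The heart of the argument---and the step I expect to be the main obstacle---is a simulation lemma: that any such rooted finite directed frame $M$ is a bounded-morphic (p-morphic) image of the frame realized by the control statements. Here $m$ independent buttons determine, at each world of $\mathcal{W}$, the set of currently pushed buttons, and since a button can only be pushed and never unpushed, these button-configurations are ordered by inclusion exactly as the Boolean lattice $\mathcal{P}(m)$, a finite directed partial order with least element (all buttons unpushed) and greatest element (all pushed). Adjoining $k$ independent switches blows up each node into a cluster of $2^k$ mutually accessible switch-patterns, each also accessing every pattern sitting above it. I would show that for $m$ and $k$ chosen large enough relative to $M$, there is a surjection $f$ from the button--switch patterns onto the worlds of $M$ sending the bottom pattern to $w$ and satisfying the forth condition (enlarging the button-configuration, with switches reset arbitrarily, maps to the accessibility relation of $M$) and the back condition (every $M$-successor of $f(\text{pattern})$ is realized by pushing further buttons and setting switches appropriately). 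This is precisely where the inflationary, one-way character of the buttons supplies the partial order while the freely resettable switches supply the clusters and the realizability of successors; checking that the directedness of $M$, combined with the independence of the family, makes such an $f$ available is the genuine combinatorial content.

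Granting the simulation, the remainder follows the template of Theorem~\ref{Theorem.Switches-S5}. For each propositional variable $p$ occurring in $\varphi$ I would let $\psi_p$ be the disjunction of those button--switch patterns whose $f$-image satisfies $p$ in $M$; a routine induction on the structure of a modal formula $\phi$, invoking the forth and back conditions of $f$ at the modal step, then shows that $U\satisfies_{\mathcal{W}}\phi(\psi_{p_0},\dots,\psi_{p_n})$ if and only if $(M,f(U))\satisfies\phi(p_0,\dots,p_n)$ whenever $U$ realizes a pattern mapping to $f(U)$. Applying this to $\varphi$ at any world of $\mathcal{W}$ realizing the bottom pattern---one exists because the family is independent---yields a failing substitution instance there, since $f$ sends that pattern to $w$ and $\varphi$ fails at $w$; hence $\varphi$ is not valid in $\mathcal{W}$, and the validities of $\mathcal{W}$ are contained in S4.2.

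Finally, for the two refinements: a world in which no button of the family is yet pushed realizes the bottom pattern, so the very same construction refutes every non-theorem of S4.2 at such a world, whence its validities lie in S4.2. At a completely arbitrary world, by contrast, some buttons may already be pushed and the lattice below is truncated; but the switches are unaffected by which buttons have been pushed and still constitute arbitrarily large independent families at that world, so Theorem~\ref{Theorem.Switches-S5} applies directly to place its validities within S5.
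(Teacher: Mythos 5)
Your overall architecture is the same as the paper's: label the worlds of $\mathcal{W}$ by their button/switch patterns, substitute for each propositional variable the disjunction of the patterns associated with the $M$-worlds satisfying it, and push a failing world of a finite counter-frame into $\mathcal{W}$. The reduction between dials and switches, the treatment of the unpushed-button worlds, and the fallback to Theorem~\ref{Theorem.Switches-S5} for S5 at arbitrary worlds all match. The divergence is in the completeness-theoretic input, and that is where your proposal has a genuine gap. You invoke completeness of S4.2 for finite rooted directed preorders and then must show that an arbitrary such frame $M$ is a bounded-morphic image of the frame actually realized by the controls, namely the Boolean lattice $\mathcal{P}(m)$ of button configurations with each node blown up into a $2^k$-cluster of switch patterns. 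You correctly identify this simulation lemma as ``the genuine combinatorial content'' and ``the main obstacle,'' but you do not prove it, and it is not routine: one must define $f$ on the entire cube, arrange monotonicity (forth), and simultaneously arrange that above every configuration every $M$-successor of its image is hit (back), which requires a careful assignment of $M$-worlds to the coatoms/atoms of the cube exploiting the directedness of $M$. As written, the proof is incomplete at exactly its load-bearing step.

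The paper sidesteps this by citing the stronger completeness theorem of Hamkins--\Lowe\ (\cite[theorem~11]{HamkinsLoewe2008:TheModalLogicOfForcing}): S4.2 is complete with respect to finite \emph{pre-Boolean algebras}, i.e., frames whose quotient order is already a Boolean algebra with uniform clusters. For such a frame the worlds can be indexed as $w^a_j$ with $a\of n$ and $j<m$, and the association $\Phi_{w^a_j}=(\bigwedge_{i\in a}b_i)\wedge(\bigwedge_{i\notin a}\neg b_i)\wedge d_j$ matches the button/dial frame on the nose --- no p-morphism is needed, and the independence hypotheses immediately give that $\Phi_{w^a_j}$-worlds access $\Phi_{w^b_i}$-worlds exactly when $a\of b$. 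Your missing lemma is, in substance, equivalent to the passage from finite directed preorders to finite pre-Boolean algebras that underlies that cited theorem; so your route is viable, but to complete it you would either have to prove that p-morphism construction yourself or simply cite the pre-Boolean-algebra completeness result, at which point your argument collapses into the paper's.
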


\goodbreak

\begin{proof}
This argument follows~\cite[theorem~13]{HamkinsLeibmanLoewe2015:StructuralConnectionsForcingClassAndItsModalLogic}, which extended the main technique of~\cite{HamkinsLoewe2008:TheModalLogicOfForcing}. That earlier result was stated only in the case of models of set theory, but the idea works generally in any Kripke model. Let us briefly describe the details. If $\varphi(p_0,\dots,p_k)$ is any propositional modal assertion not in S4.2, then by~\cite[theorem~11]{HamkinsLoewe2008:TheModalLogicOfForcing} it fails at some world in a Kripke model $M$ whose frame is a finite pre-Boolean algebra. (A pre-Boolean algebra is a partial pre-order $(B, \leq)$ whose quotient order $B / \equiv$ is a Boolean algebra, where $a\equiv b\Iff a\leq b\leq a$ is the induced equivalence relation; we shall subsequently refer to the equivalence classes as the `clusters' of the Kripke model.) We may assume without loss by duplicating worlds that the clusters of $M$ have uniform size $m$, and let $n$ be the number of atomic clusters, that is, the number of atoms in the quotient Boolean algebra. We may therefore index the worlds of $M$ as $w^a_j$, where $j<m$ and $a\of n$, where $w^a_j$ accesses $w^b_i$ just in case $a\of b$. Fix an independent family of $n$ buttons $\<b_i\mid i<n>$ and a dial $\<d_j\mid k<m>$ of length $m$ (use the switches to construct a dial if necessary). For each world $w^a_j$, let $\Phi_{w^a_j}=(\bigwedge_{i\in a}b_i)\wedge(\bigwedge_{i\notin a}\neg b_i)\wedge d_j$, which asserts that the pushed buttons are exactly those indexed in $a$ and the dial value is $j$. The independence of the buttons and the dial ensures that these assertions are mutually exclusive and a partition of truth, and furthermore that the worlds of $\mathcal{W}$ in which $\Phi_{w^a_j}$ holds can access worlds with $\Phi_{w^b_i}$ just in case $a\of b$. We associate each world $w^a_j$ of $M$ with the worlds $W\in\mathcal{W}$ in which $\Phi_{w^a_j}$ is true. For each propositional variable $p$ in $\varphi$, let $\psi_p=\bigvee\set{\Phi_{w^a_j}\mid (M,w^a_j)\satisfies p}$, which will be true in a world $W$ of $\mathcal{W}$ just in case $W$ is associated with a world $w$ in $M$ in which $p$ is true. One can now verify that
  $$W\satisfies_{\mathcal{W}}\phi(\psi_{p_0},\dots,\psi_{p_n})\qquad\Iff\qquad(M,w)\satisfies\phi(p_0,\dots,p_n),$$
whenever world $w$ in $M$ is associated with world $W$ in $\mathcal{W}$. This is because it is true for the propositional variables and is easily preserved by logical connectives, and furthermore because the accessibility relation of $M$ is exactly simulated via the association as we described. Since the original modal assertion $\varphi(p_0,\dots,p_n)$ fails at a world of $M$, we have therefore provided a substitution instance of $\varphi$ that fails at the associated worlds $W$ of $\mathcal{W}$, and so $\varphi$ is not valid in $\mathcal{W}$.

Finally, since the independent switches remain operable even after all the buttons are pushed, each individual world of $\mathcal{W}$ has its validities contained in S5 by theorem~\ref{Theorem.Switches-S5}.
\end{proof}

Following~\cite{HamkinsLeibmanLoewe2015:StructuralConnectionsForcingClassAndItsModalLogic}, a {\df ratchet} for a Kripke model $\mathcal{W}$ at a world $W$ is a sequence of assertions $r_1$, $r_2,\ldots, r_n$, such that each $r_i$ is an unpushed button in $W$, each statement $r_i$ necessarily implies all the previous statements $r_j$ for $j<i$ and over any world in which $r_i$ is not yet true, it can become true in some accessible world in which $r_{i+1}$ is not true; in this case, we say that the ratchet volume is $i$ in that accessed world. The idea of the ratchet, of course, is that it is unidirectional: the ratchet volume can only go up, never down. It is sometimes convenient to append a tautologically true statement $r_0$ at the beginning and say that the ratchet volume is $0$ in the initial world $W$. A ratchet is mutually independent with an independent family of switches, if in every world, one can achieve any finite pattern for the switches in an accessed world, without increasing the ratchet volume; and similarly with a dial.

\goodbreak

\begin{theorem}\label{Theorem.Ratchets+switches=S4.3}
  If $\mathcal{W}$ is Kripke model having arbitrarily large finite ratchets mutually independent with arbitrarily large families of independent switches (or with an arbitrarily large finite dial), then the propositional modal validities of $\mathcal{W}$ are contained in S4.3. The validities of any particular world with ratchet volume zero are contained in S4.3 and, in any case, the validities of each world are contained in S5.
\end{theorem}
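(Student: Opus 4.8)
The plan is to follow the template of the proof of Theorem~\ref{Theorem.Buttons+switches-S4.2}, replacing the pre-Boolean frame and the button-encoding of a Boolean structure by a pre-linear frame and a ratchet-encoding of a linear structure. First I would invoke the completeness of the finite pre-linear Kripke frames for S4.3 (mentioned in the introduction): if $\varphi(p_0,\dots,p_k)$ is a propositional modal assertion not in S4.3, then it fails at some world of a finite Kripke model $M$ whose frame is a finite pre-linear order---a partial pre-order whose quotient is a finite linear order, so that the clusters are linearly arranged. By duplicating worlds of $M$ if necessary, I would arrange that every cluster has the same uniform size $m$; writing $\ell$ for the number of clusters, I can then index the worlds of $M$ as $w^a_j$, where $a<\ell$ is the position of the cluster in the linear order and $j<m$ is the position within the cluster, so that $w^a_j$ accesses $w^b_i$ just in case $a\leq b$.

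Next I would fix, inside $\mathcal{W}$, a ratchet long enough to realize all the ratchet volumes $0,1,\dots,\ell-1$, mutually independent with a dial $\<d_j\mid j<m>$ of length $m$ (constructed from the switches if necessary). For each world $w^a_j$ of $M$, let $\Phi_{w^a_j}$ be the assertion that the ratchet volume is exactly $a$ and the dial value is $j$. Since every world of $\mathcal{W}$ has a unique ratchet volume and a unique dial value, these assertions are mutually exclusive and exhaust the truth, giving a partition of the worlds of $\mathcal{W}$ with one piece associated to each world of $M$. The crucial point, which I expect to be the heart of the argument, is that this association exactly simulates the accessibility relation of $M$: because the ratchet is \emph{unidirectional}, the ratchet volume can only increase, so a world where $\Phi_{w^a_j}$ holds can access one where $\Phi_{w^b_i}$ holds only when $a\leq b$; conversely, the ratchet lets one raise the volume from $a$ to any $b\geq a$, and then the mutual independence of the dial with the ratchet lets one set the dial to any value $i$ without further increasing the volume, so every such target with $a\leq b$ is indeed reachable. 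Thus $\Phi_{w^a_j}$-worlds access $\Phi_{w^b_i}$-worlds if and only if $a\leq b$, precisely the accessibility of $M$.

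With the simulation in place, I would set $\psi_p=\bigvee\set{\Phi_{w^a_j}\mid (M,w^a_j)\satisfies p}$ for each propositional variable $p$ of $\varphi$, and prove by the usual induction on formulas that
 $$W\satisfies_{\mathcal{W}}\phi(\psi_{p_0},\dots,\psi_{p_k})\qquad\Iff\qquad(M,w)\satisfies\phi(p_0,\dots,p_k)$$
whenever world $w$ of $M$ is associated with world $W$ of $\mathcal{W}$. The base case holds by the definition of the $\psi_p$, the Boolean connectives are immediate, and the modal case uses exactly the accessibility correspondence just established. Since $\varphi$ fails at some world of $M$, and since that world is realized in $\mathcal{W}$ wherever the ratchet and dial of the required sizes are available, we obtain a failing substitution instance of $\varphi$ in $\mathcal{W}$, proving that the validities of $\mathcal{W}$ are contained in S4.3. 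For a world $W$ of ratchet volume zero, the same construction started from $W$ associates $W$ with a bottom-cluster world $w^0_j$ of $M$, from which every world of the linearly ordered $M$ is reachable, so the full simulation goes through and the validities at $W$ are likewise contained in S4.3. Finally, since the switches (equivalently the dial) remain operable at every world regardless of the ratchet volume, Theorem~\ref{Theorem.Switches-S5} applies to show that the validities of each individual world are in any case contained in S5.
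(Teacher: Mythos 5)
Your proposal is correct and follows essentially the same route as the paper's own proof: completeness of S4.3 with respect to finite linear pre-orders, uniformizing cluster sizes, using the ratchet volume to encode cluster position and the dial to encode position within a cluster, defining the $\Phi_w$ partition and the substitution instances $\psi_p$ as disjunctions, and verifying the simulation by induction on formulas, with the S5 bound for individual worlds coming from the still-operable switches via theorem~\ref{Theorem.Switches-S5}. The only details you elide---relabeling the dial so that the given volume-zero world is associated with the failing world of $M$, and restricting attention to the worlds reachable from it---are glossed at the same level in the paper's own presentation.
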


\goodbreak

\begin{proof}
This argument builds on~\cite[theorems~11]{HamkinsLeibmanLoewe2015:StructuralConnectionsForcingClassAndItsModalLogic}, although again those results were stated only for models of set theory, whereas the same idea works with respect to any Kripke model. Let us briefly sketch the argument. If a modal assertion $\varphi(p_0,\dots,p_n)$ is not in S4.3, then since the finite linear pre-orders are a complete set of frames for S4.3, it follows that $\varphi$ fails at an initial world in a finite Kripke model whose frame is a finite linear pre-order (a finite chain of clusters of worlds, with each world in a cluster accessing all worlds in that cluster and any world in any higher cluster). Let $n+1$ be the number of clusters, and we may assume without loss that each cluster has uniform size $m$. So the worlds of $M$ can be described as $w^i_j$, where $i\leq n$ and $j<m$. Let $r_0,r_1,\dots,r_n$ be a ratchet in $\mathcal{W}$ of length $n$, mutually independent with a dial $\<d_j\mid j<m>$ (use switches to construct a dial if necessary). For each world $w^i_j$ in $M$, let $\Phi_{w^i_j}=r_i\wedge d_j$ be the assertion expressing that the ratchet volume is $i$ and the dial value is $j$, and we associate world $w^i_j$ with the words $W$ in $\mathcal{W}$ satisfying $\Phi_{w^i_j}$. The assumptions on the ratchet and dial ensure that a world in $\mathcal{W}$ satisfying $\Phi_{w^i_j}$ can access those worlds in $\mathcal{W}$ that satisfy some $\Phi_{w^{i'}_{j'}}$, exactly for $i\leq i'$. Thus, the $\Phi_w$ assertions partition the worlds of $\mathcal{W}$ in a way that exactly mimics the accessibility relation of $M$. For every propositional variable $p$ appearing in $\varphi$, let $\psi_p=\bigvee\set{\Phi_w\mid (M,w)\satisfies p}$, which holds exactly in the worlds of $\mathcal{W}$ that are associated with a world of $M$ in which $p$ is true. It now follows by induction on formulas that
   $$W\satisfies_{\mathcal{W}}\phi(\psi_{p_0},\dots,\psi_{p_n})\qquad\Iff\qquad(M,w)\satisfies\phi(p_0,\dots,p_n),$$
whenever $W$ is a world in $\mathcal{W}$ associated with world $w$ in $M$. Since $\varphi$ fails at an initial world of $M$, this therefore provides a substitution instance of $\varphi$ that fails at any world of $\mathcal{W}$ with zero ratchet volume. So the validities of $\mathcal{W}$ are contained amongst S4.3.

Meanwhile, because the switches remain operable even after the ratchet is used up, every individual world of $\mathcal{W}$ has its validities contained in S5 by theorem~\ref{Theorem.Switches-S5}.
\end{proof}

For the set-theoretic structures that we have in mind for our application, it often happens that one can define much longer transfinite ratchets, and these can eliminate the need for switches. So let us describe how that works.
Suppose that $\mathcal{W}$ is a potentialist system forming a coherent collection of transitive set models of some weak set theory, whose limit model is $U$. Assume furthermore that the ordinals of $U$ have limit ordinal height at least $\omega^2$ and that the models are able to carry out simple ordinal arithmetic. Generalizing the terminology of~\cite{HamkinsLeibmanLoewe2015:StructuralConnectionsForcingClassAndItsModalLogic}, let us say that $\mathcal{W}$ admits a {\df long ratchet} starting at world $W$, if there are a sequence of statements $r_\alpha$ for $\alpha\in\Ord^U$, each an unpushed button in $W$, such that each $r_\alpha$ necessarily implies all earlier $r_\beta$ for $\beta<\alpha$, no world satisfies all the $r_\alpha$ and in any world in which $r_\alpha$ is not yet true (perhaps because $\alpha$ does not yet exist in that world), then there is an accessible world in which $\alpha$ exists and $r_\alpha$ is true, but $r_{\alpha+1}$ is not true (perhaps because $\alpha+1$ does not yet exist). The ratchet is {\df uniform} if there is a formula $\eta$ such that $r_\alpha=\eta(\alpha)$, expressed with parameter $\alpha$. It is not expected necessarily that the worlds of $\mathcal{W}$ each have all the ordinals $\alpha$ of the limit model.

The main observation, originally made by George Leibman, is that a uniform long ratchet can be used as in~\cite[theorem~12]{HamkinsLeibmanLoewe2015:StructuralConnectionsForcingClassAndItsModalLogic} to provide an arbitrarily large finite ratchet mutually independent with as many finitely many independent switches as desired or with a dial. Basically, ordinals of the form $\omega\cdot k+j$ are viewed as having simulated ratchet volume $k$ and dial value $j\mod m$, or switch values given by the binary digits of $j$. Thus, one can view the assertion $r_{\omega\cdot k+j}$ as expressing simulated volume $k$ and dial value $j\mod m$, and the point is that you can change the dial value to any desired number below $m$, without increasing the simulated ratchet volume, simply by increasing $r_\alpha$ while staying in the same $\omega$-block of ordinals. For ordinals above $\omega^2$, one views them as $\lambda+j$ for some limit ordinal $\lambda$, and then takes $r_{\lambda+j}$ as asserting the top ratchet volume with dial value $j\mod m$. Thus, a uniform long ratchet provides arbitrarily long finite ratchets mutually independent with arbitrarily large dials or switches. Note that although the long ratchet $r_\alpha$ itself uses the ordinals $\alpha$ as parameters, the ratchet volume in any world is a definable ordinal (or $\Ord$ itself in that world) and we may therefore definably extract the corresponding finite ratchet and dial values without need for any parameter. We therefore may deduce:

\begin{theorem}\label{Theorem.Long-ratchet=S4.3}
  If $\mathcal{W}$ is potentialist system of transitive models of set theory admitting a uniform long ratchet, then at the worlds of $\mathcal{W}$ where the ratchet has not yet started to crank, the modal validities are contained within the modal theory S4.3, and the validities of any particular world of $\mathcal{W}$ is contained within S5.
\end{theorem}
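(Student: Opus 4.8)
The plan is to reduce the statement directly to theorems~\ref{Theorem.Ratchets+switches=S4.3} and~\ref{Theorem.Switches-S5} by showing that a uniform long ratchet manufactures, at every relevant world, arbitrarily large finite ratchets that are mutually independent with arbitrarily large dials. Once that manufacturing is carried out, theorem~\ref{Theorem.Ratchets+switches=S4.3} yields that the validities at worlds of zero simulated ratchet volume lie within S4.3, while the dial structure that survives after the finite ratchet is exhausted gives the S5 bound at every world via theorem~\ref{Theorem.Switches-S5}.

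First I would pin down the \emph{ratchet volume} of a world $W$. Since $r_\alpha$ necessarily implies $r_\beta$ for $\beta<\alpha$, the set $\{\alpha\mid W\satisfies_{\mathcal{W}} r_\alpha\}$ is an initial segment of $\Ord^W$, and its supremum $v(W)$ (an ordinal of $W$, or $\Ord^W$ itself) is a \emph{definable} ordinal: uniformity gives $r_\alpha=\eta(\alpha)$, so $v(W)$ is defined without parameters. The defining property of a long ratchet then says precisely that $v$ can be cranked strictly upward to any not-yet-reached value and never decreases, and that no world realizes $v(W)=\Ord^U$ in the limit.

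Next comes the simulation. Fix a target finite ratchet length $n$ and dial size $m$. Writing each ordinal below $\omega\cdot n$ uniquely as $\omega\cdot k+j$ with $k<n$ and $j<\omega$, I declare the \emph{simulated ratchet volume} to be $k$ and the \emph{dial value} to be $j\bmod m$; ordinals at or above $\omega\cdot n$ are assigned simulated volume $n$. Concretely, let $R_k$ assert $v(W)\geq\omega\cdot k$, and let $d_j$ assert that $v(W)$ has the form $\lambda+i$ for a limit ordinal $\lambda$ with $i\equiv j\pmod m$; all of these are parameter-free by the previous paragraph. The statements $R_k$ form a finite ratchet: each is an unpushed button in a world of volume zero, $R_{k+1}$ necessarily implies $R_k$, and from a world with $R_k$ but not $R_{k+1}$ one can crank $v$ up to $\omega\cdot(k+1)$. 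The dial is mutually independent with this ratchet because from any world one can crank $v$ \emph{forward within the same} $\omega$-block to realize any residue $j\bmod m$, changing the dial value while leaving the simulated volume $k$ fixed; the hypothesis that $\Ord^U$ has limit height at least $\omega^2$ guarantees a full further $\omega$-block is always available, and room above $\omega\cdot n$, so that the top cluster's dial still cycles.

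The step I expect to require the most care is the bookkeeping at the top together with the parameter-freeness. One must check that at simulated volume $n$ the dial genuinely cycles through all $m$ values with no way to crank the finite ratchet further---there being no simulated volume above $n$---which is exactly why the top volume absorbs all ordinals from $\omega\cdot n$ on and why the $\omega^2$ height hypothesis is invoked; and one must confirm that $R_k$ and $d_j$ are definable without the ordinal parameters $\alpha$ that the underlying $r_\alpha$ secretly use, so that the resulting validities hold for substitution instances from $\mathcal{L}$ and not merely from $\mathcal{L}_W$. With those verifications in hand, the simulated finite ratchet and dial meet the hypotheses of theorem~\ref{Theorem.Ratchets+switches=S4.3} at every world where the long ratchet has not yet started to crank (so that the simulated ratchet volume there is zero), giving the S4.3 bound, and the surviving dial meets the hypotheses of theorem~\ref{Theorem.Switches-S5} at every world, giving the S5 bound and completing the argument.
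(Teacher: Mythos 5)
Your proposal follows essentially the same route as the paper: the paper's argument (given in the paragraph preceding the theorem, crediting Leibman) likewise reads off a simulated finite ratchet volume $k$ and dial value $j\bmod m$ from the ordinal $\omega\cdot k+j$ at which the long ratchet currently sits, notes that this volume is definable without the ordinal parameters so the control statements are parameter-free, and then invokes theorems~\ref{Theorem.Ratchets+switches=S4.3} and~\ref{Theorem.Switches-S5}. Your write-up just makes the definitions of $R_k$ and $d_j$ and the cranking-within-an-$\omega$-block verification more explicit than the paper does.
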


In the previous several theorems, where we provided upper bounds on the propositional modal validities of a given Kripke model, we should like to point out that the failing substitution instances we constructed were of the form $\varphi(\psi_{p_0},\dots,\psi_{p_n})$, where the sentences $\psi_p$ were various Boolean combinations of the switch, button and ratchet assertions appearing in the hypotheses of the theorems. In particular, in the case that the Kripke model $\mathcal{W}$ is a collection of $\mathcal{L}$-structures under the potentialist semantics and the control statements are $\mathcal{L}$-sentences, then the arguments show that we achieve the upper bound modal theories S4.3, S4.2 with respect to $\mathcal{L}$-substitutions. These arguments therefore provide a slightly stronger result than achieving the upper bounds with respect to $\mathcal{L}^\Diamond$-substitutions, since otherwise the possibility might remain that there could be additional validities that work with respect to $\mathcal{L}$-substitutions, even if the stated bound is optimal for $\mathcal{L}^\Diamond$-substitutions. This observation is relevant for the applications in the next section to the case of set-theoretic potentialism, where in most cases we are able to construct the relevant control statements in the language $\mathcal{L}$ rather than only in $\mathcal{L}^\Diamond$. Nevertheless, in a few cases in \S\ref{Section.set-theoretic-potentialism} we will need the extra power of $\mathcal{L}^\Diamond$ in order to achieve the desired control statements.

\bigskip\medskip\bigskip
\section{Set-theoretic potentialism}\label{Section.set-theoretic-potentialism}

\subsection{Introduction}
Let us now turn specifically to set-theoretic potentialism, our motivating focus. We shall introduce and consider several natural kinds of set-theoretic potentialism, and for each of them, we shall use the results of the previous section in order to identify exactly what are the modal validities of that variety of potentialism, and under what circumstances a particular world might exhibit a stronger collection of validities. We are particularly interested in identifying instances of the potentialist maximality principle, which occurs when S5 is valid at a world.

It may be useful to clarify the dialectic of the article. We are analyzing various kinds of set-theoretic potentialism, but for most of the work we do so only as far as they are simulated from the perspective of \ZFC\ set theory, which we shall use as our background theory. This choice of background theory is reasonable in light of its familiarity and because it is acceptable to actualists and potentialists alike. After all, since \ZFC\ is a non-modal theory, it doesn't take a stand on whether its universe $V$ should be regarded as fully completed or as incompletable and merely potential. The former option is suggested when the language of \ZFC\ is taken at face value, while the latter is permissible when we adopt the modal translation of the language of \ZFC\ described in theorem~\ref{Theorem.Potentialist-translation}.

For each of the kinds of set-theoretic potentialism we shall consider, it turns out that the collection of modal validities can depend on a great variety of contextual choices: on the particular world that is considered, on the language that is used, on the modal vocabulary that is allowed, and on the parameters that are allowed. For example, it is possible that a world $W$ validates S5 with respect to substitution instances by sentences in the language of set theory, but only S4.3 when parameters or modal vocabulary are allowed. For this reason, it becomes somewhat fussy or technical to provide a full account of the situation; but we have nevertheless strived to do so.

The most restrictive language we consider is the language of set theory $\mathcal{L}_\in$, which beyond the logical vocabulary has only the set-membership relation $\in$; the most generous language is generally $\mathcal{L}_{\in,W}^\Diamond$, which also allows the modal operators and arbitrary parameters from the world $W$ being considered. Since as we mentioned earlier, it is a more severe requirement for a propositional modal assertion to be valid with respect to substitution instances from a more expressive language, it follows in general that we have the following inclusions for any world $W$ in any potentialist system.
$$\begin{tikzpicture}[xscale=1.5,yscale=.8]
\node at (0,0) (1) {$\Val(W,\mathcal{L}_{\in,W}^\diamond)$};
\node at (2,1) (2) {$\Val(W,\mathcal{L}_\in^\Diamond)$};
\node at (2,-1) (3) {$\Val(W,\mathcal{L}_{\in,W})$};
\node at (4,0) (4) {$\Val(W,\mathcal{L}_\in)$};
\draw (1) edge[draw=none] node [sloped, auto=false, allow upside down] {$\subseteq$} (2);
\draw (1) edge[draw=none] node [sloped, auto=false, allow upside down] {$\subseteq$} (3);
\draw (2) edge[draw=none] node [sloped, auto=false, allow upside down] {$\subseteq$} (4);
\draw (3) edge[draw=none] node [sloped, auto=false, allow upside down] {$\subseteq$} (4);
\end{tikzpicture}$$

As a preview, the following table briefly summarizes our main conclusions for the main kinds of set-theoretic potentialism that we consider. Details and additional cases can be found in the rest of the section.

\renewcommand{\descriptionlabel}[1]{\hspace\labelsep\upshape\bfseries #1}
$$\begin{minipage}{.9\textwidth}\small
\begin{description}[itemsep=4pt,style=nextline]
    \item[Rank-potentialism]
        $\mathcal{V}=\set{V_\beta\mid\beta\in\Ord}$\\
        $\possible\varphi=$ true in some larger $V_\beta$\\
        height potentialist, width actualist\\
         $S4.3\ \of\  \Val_\mathcal{V}(V_\beta,\mathcal{L}^\Diamond_{\in,V_\beta})\ \of\ \Val_\mathcal{V}(V_\beta,\mathcal{L}_\in)\ \of\  S5$\\
    \item[Grothendieck-Zermelo potentialism]
        $\mathcal{Z}=\set{V_\kappa\mid\kappa\text{ inaccessible}}$\\
        $\possible\varphi=$ true in some larger inaccessible $V_\kappa$\\
        height potentialist, width actualist\\
        $ S4.3\ \of\  \Val_\mathcal{Z}(V_\kappa,\mathcal{L}^\Diamond_{\in,V_\kappa})\ \of\    \Val_\mathcal{Z}(V_\kappa,\mathcal{L}_\in)\ \of\  S5$\\
    \item[Transitive-set potentialism]
        $\mathcal{T}=\set{W\mid W\text{ transitive}}$\\
        $\possible\varphi=$ true in some larger transitive set\\
        height potentialist, width potentialist\\
        $S4.2\ \of\ \Val_\mathcal{T}(W,\mathcal{L}_{\in,W}^\Diamond)\ \of\ \Val_\mathcal{T}(W,\mathcal{L}_\in)\ \of\  S5$\\
    \item[Forcing potentialism]
        $\mathcal{M}=\set{V[G]\mid G\text{ is }V\text{-generic}}$\\
        $\possible\varphi=$ true in some forcing extension\\
        height actualist, width potentialist\\
        $S4.2\ = \ \Val_\mathcal{M}(W,\mathcal{L}_{\in,W})\ \of \ \Val_\mathcal{M}(W,\mathcal{L}_\in)\ \of \ S5$\\
    \item[Countable-transitive model potentialism]
        $\mathcal{C}=\set{M\mid M\satisfies\ZFC,\text{ countable, transitive}}$\\
        $\possible\varphi=$ true in some larger countable transitive model of \ZFC\\
        height potentialist, width potentialist\\
        $S4.2\ =\ \Val_\mathcal{C}(W,\mathcal{L}_{\in,W})\ \of \ \Val_\mathcal{C}(W,\mathcal{L}_\in)\ \of \ S5$\\
    \item[Countable-model potentialism]
        $\mathcal{W}=\set{M\mid M\satisfies\ZFC,\text{ countable}}$\\
        $\possible\varphi=$ true in some larger countable model of \ZFC\\
        height potentialist, width potentialist\\
        $S4.3\ =\ \Val_\mathcal{W}(W,\mathcal{L}_{\in,W}^\diamond)\ = \ \Val_\mathcal{W}(W,\mathcal{L}_{\in,W})\ \of \ \Val_\mathcal{W}(W,\mathcal{L}_\in)\ \of \ S5$\\
\end{description}
\end{minipage}$$

In each case, the indicated lower and upper bounds are realized in particular worlds, usually in the strongest possible way that is consistent with the stated inclusions, although in some cases, this is proved only under additional mild technical hypotheses. Indeed, some of the potentialist accounts are only undertaken with additional set-theoretic assumptions going beyond \ZFC. For example, the Grothendieck-Zermelo account of potentialism is interesting mainly only under the assumption that there are a proper class of inaccessible cardinals, and countable-transitive-model potentialism is more robust under the assumption that every real is an element of a countable transitive model of set theory, which can be thought of as a mild large-cardinal assumption.

We shall now consider the various forms of potentialism in detail, and not only the ones mentioned above but also further variations. Since the subsections are largely independent of one another, readers may choose to focus on the forms of potentialism they happen to find most interesting.

\subsection{Set-theoretic rank potentialism}\label{Subsection.Rank-potentialism} Let us begin with a very natural case of height potentialism combined with width actualism, namely, the case of set-theoretic {\df rank potentialism}, constituted by the worlds
$$\mathcal{V}=\set{V_\beta\mid\beta\in\Ord}.$$
These worlds $V_\beta$ are the rank-initial segments of the cumulative set-theoretic hierarchy, and the potentialist modality here is ``true in some larger rank-initial segment of the universe,'' so that $\possible\varphi$ is true at some $V_\beta$, if $\varphi$ is true in some larger $V_\delta$. This system clearly provides a potentialist account of its limit, the full background set-theoretic universe $V$.

\begin{theorem}\label{Theorem.rank-potentialism-bounds}
In set-theoretic rank potentialism $\mathcal{V}$, every world $V_\beta$ obeys
\begin{equation*}
    S4.3 \ \subseteq \ \Val_\mathcal{V}(V_\beta,\mathcal{L}^\Diamond_{\in,V_\beta})  \ \of \
    \Val_\mathcal{V}(V_\beta,\mathcal{L}_\in) \ \of \ S5.
\end{equation*}
In other words, S4.3 is valid in every world for these languages and the validities of every world are contained within S5.
\end{theorem}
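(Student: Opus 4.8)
The statement bundles three inclusions, of which only the two outer ones carry content. For the lower bound $S4.3\of\Val_\mathcal{V}(V_\beta,\mathcal{L}^\Diamond_{\in,V_\beta})$ I would simply observe that the potentialist accessibility relation on $\mathcal{V}$ is linear: $V_\beta$ accesses $V_\delta$ exactly when $\beta\leq\delta$, and the ordinals are linearly ordered. Theorem~\ref{Theorem.lower-bounds}(4) then applies directly and yields that S4.3 is valid at every world of $\mathcal{V}$ for all assertions of $\mathcal{L}^\Diamond$ with parameters, which is exactly the desired inclusion. The middle inclusion $\Val_\mathcal{V}(V_\beta,\mathcal{L}^\Diamond_{\in,V_\beta})\of\Val_\mathcal{V}(V_\beta,\mathcal{L}_\in)$ is free: it is an instance of the general monotonicity principle $\mathcal{L}\of\mathcal{L}'\Longrightarrow\Val(W,\mathcal{L}')\of\Val(W,\mathcal{L})$ recorded earlier, applied to $\mathcal{L}_\in\of\mathcal{L}^\Diamond_{\in,V_\beta}$, since a richer stock of substitution instances can only cut down the validities.

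All the work is in the upper bound $\Val_\mathcal{V}(V_\beta,\mathcal{L}_\in)\of S5$, and my plan is to exhibit a uniform long ratchet and then quote Theorem~\ref{Theorem.Long-ratchet=S4.3}. I would take $r_\alpha$ to be the assertion ``$\alpha$ exists,'' formalized uniformly by the formula $\eta(\alpha)=$``$\alpha$ is an ordinal'' with $\alpha$ supplied as a parameter; in $V_\beta$ this holds precisely when $\alpha<\beta$. The four defining clauses are then routine to check: each $r_\alpha$ is an unpushed pure button (an ordinal, once present, persists, and $\possible\necessary r_\alpha$ holds because one may always pass height $\alpha$); $r_\alpha$ necessarily implies $r_\gamma$ for $\gamma<\alpha$, since the presence of $\alpha$ forces the presence of every smaller ordinal; no $V_\beta$ satisfies all the $r_\alpha$, as no rank-initial segment contains every ordinal; and from any world in which $r_\alpha$ is not yet true one may pass to $V_{\alpha+1}$, where $\alpha$ exists while $\alpha+1$ does not, so that $r_\alpha$ becomes true but $r_{\alpha+1}$ does not. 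From the vantage of a fixed world $V_\beta$, the tail $\<r_\alpha\mid\alpha\geq\beta>$ of unpushed buttons is thus a uniform long ratchet, and Theorem~\ref{Theorem.Long-ratchet=S4.3} delivers that the validities of $V_\beta$ are contained in S5.

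The step I would handle most carefully---and the only genuine subtlety---is that the target upper bound is stated for the parameter-free language $\mathcal{L}_\in$, whereas $r_\alpha$ overtly uses the ordinal parameter $\alpha$. Here I would invoke the remark accompanying Theorem~\ref{Theorem.Long-ratchet=S4.3}: the arbitrarily large finite ratchet and dial (equivalently the independent switches demanded by Theorem~\ref{Theorem.Switches-S5}) are read off from the ratchet volume of a world, which is a \emph{definable} ordinal, so the resulting control statements are genuine parameter-free $\mathcal{L}_\in$-sentences. The failing substitution instances produced by the argument are therefore built from $\mathcal{L}_\in$-sentences, giving the bound against $\mathcal{L}_\in$-substitutions as required. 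I would also note that the hypothesis of Theorem~\ref{Theorem.Long-ratchet=S4.3} that the worlds carry out ordinal arithmetic is harmless here: every $V_\beta$ accesses arbitrarily tall $V_\delta$ in which the relevant switch patterns are realized, so the switches operate from each world and the S5 bound is uniform across $\mathcal{V}$, including the few degenerate small $V_\beta$.
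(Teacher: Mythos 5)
Your proof is correct, and the lower bound and the middle inclusion are handled exactly as in the paper (linearity of the $V_\beta$ plus theorem~\ref{Theorem.lower-bounds}(4), and monotonicity of $\Val$ in the language). Where you diverge is the upper bound. The paper's proof is lighter: it exhibits a single infinite dial directly in $\mathcal{L}_\in$ --- $d_j$ asserting that the ordinal height has the form $\lambda+j$ for $\lambda$ a limit or zero --- and then quotes theorems~\ref{Theorem.Switches-S5} and~\ref{Theorem.Switches-iff-dials} to conclude $\Val_\mathcal{V}(V_\beta,\mathcal{L}_\in)\of S5$, with no ratchet machinery at all. You instead build the uniform long ratchet $r_\alpha=$``$\alpha$ exists'' and invoke theorem~\ref{Theorem.Long-ratchet=S4.3}; this is sound (your handling of the parameter issue via the definability of the ratchet volume, and of the degenerate small $V_\beta$, matches the paper's own remarks), but it proves strictly more than the stated theorem requires. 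Indeed, your argument simultaneously establishes that the validities of the low worlds are \emph{exactly} S4.3, which is the content of the next result, theorem~\ref{Theorem.Rank-potentialism-lower-bound}; the paper even notes after that theorem that it ``could have alternatively been proved by using a long ratchet and applying theorem~\ref{Theorem.Long-ratchet=S4.3} \ldots\ with the ratchet $r_\alpha=$`$\alpha$ exists.''' So you have effectively merged the two proofs: the cost is invoking the heavier long-ratchet apparatus where a bare dial suffices, and the benefit is getting the sharpness of the lower bound for free.
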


\begin{proof}
Since the rank-initial segments $V_\beta$ are linearly ordered, it follows by theorem~\ref{Theorem.lower-bounds} that every assertion in S4.3 is valid for rank potentialism, with respect to any language, verifying the first inclusion. The central inclusion $\Val_\mathcal{V}(V_\beta,\mathcal{L}^\Diamond_{\in,V_\beta})\of\Val_\mathcal{V}(V_\beta,\mathcal{L}_\in)$ follows from the fact that it is a less severe requirement for a propositional modal assertion to be valid with respect to less expressive language. Finally, to show the last inclusion $\Val_\mathcal{V}(V_\beta,\mathcal{L}_\in)\of S5$, it suffices by theorems~\ref{Theorem.Switches-S5} and~\ref{Theorem.Switches-iff-dials} to show that we have an infinite dial that works in all these worlds. For $j<\omega$, let $d_j$ be the assertion that the height of the ordinals is $\lambda+j$, where $\lambda$ is a limit ordinal or zero. So $d_j$ is true in $V_\beta$ if and only if $\beta=\lambda+j$ for some limit ordinal $\lambda$ or zero. These statements are expressible in the language of set theory $\mathcal{L}_\in$ without parameters or modal vocabulary. And they form a dial, because every ordinal $\beta$ is uniquely expressed as $\lambda+j$ for some limit ordinal $\lambda$ or zero and some finite $j<\omega$, and from any $V_\beta$ we can extend to a larger $V_{\lambda+j}$ so as to realize any desired $j$.
\end{proof}

\begin{theorem}\label{Theorem.Rank-potentialism-lower-bound}
The lower bound of theorem~\ref{Theorem.rank-potentialism-bounds} is sharp, for there are some worlds $V_\beta$ in the rank-potentialist system $\mathcal{V}$ with
  $$S4.3 = \Val_\mathcal{V}(V_\beta,\mathcal{L}^\Diamond_{\in,V_\beta})=
  \Val_\mathcal{V}(V_\beta,\mathcal{L}_\in).$$
\end{theorem}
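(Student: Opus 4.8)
The plan is to exhibit particular worlds $V_\beta$ at which one can install a ratchet together with the very height dial already used in theorem~\ref{Theorem.rank-potentialism-bounds}, so that theorem~\ref{Theorem.Ratchets+switches=S4.3} pins the validities down to S4.3. Theorem~\ref{Theorem.rank-potentialism-bounds} already gives $S4.3\subseteq\Val_\mathcal{V}(V_\beta,\mathcal{L}^\Diamond_{\in,V_\beta})\subseteq\Val_\mathcal{V}(V_\beta,\mathcal{L}_\in)$ at every world, and since $\Val_\mathcal{V}(V_\beta,\mathcal{L}^\Diamond_{\in,V_\beta})\subseteq\Val_\mathcal{V}(V_\beta,\mathcal{L}_\in)$ in general, it suffices to find worlds at which the larger set satisfies $\Val_\mathcal{V}(V_\beta,\mathcal{L}_\in)\subseteq S4.3$; the intermediate set is then squeezed to equality. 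The key is to arrange all the control statements to be $\mathcal{L}_\in$-sentences, so that by the remark following theorem~\ref{Theorem.Long-ratchet=S4.3} the S4.3 upper bound is achieved already for $\mathcal{L}_\in$-substitution instances, which is exactly what the generous set $\Val_\mathcal{V}(V_\beta,\mathcal{L}_\in)$ demands.

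For the ratchet I would count an absolutely defined cofinal class of ordinal milestones. Let $r_i$ be the $\mathcal{L}_\in$-sentence asserting that there are at least $i$ admissible ordinals above $\omega$, that is, at least $i$ ordinals $\gamma>\omega$ with $L_\gamma\satisfies\KP$ (one could equally well count the $L$-cardinals). In $\ZFC$ the admissible ordinals form a proper class, so no world satisfies all the $r_i$; each $r_i$ necessarily implies $r_j$ for $j<i$; and from any world in which $r_i$ is not yet true one may climb past the $i$-th admissible but stop short of the $(i+1)$-st, making $r_i$ true while $r_{i+1}$ stays false. Truncating this $\omega$-indexed sequence yields arbitrarily large finite ratchets. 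For the dial I reuse $d_j=$ ``the ordinal height is $\lambda+j$ for a limit $\lambda$ or zero'' from theorem~\ref{Theorem.rank-potentialism-bounds}. These are mutually independent, since adjusting the finite part $j$ of the height (moving within or between $\omega$-blocks lying well away from the sparse admissibles) resets the dial freely without passing a new admissible, hence without increasing the ratchet volume.

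Now choose any $\beta$ with $\omega^2\leq\beta\leq\omegaCK$, for instance $\beta=\omegaCK$ itself. Below such a $\beta$ there are no admissible ordinals above $\omega$, so every $r_i$ is unpushed and $V_\beta$ has ratchet volume zero, while $\beta\geq\omega^2$ guarantees the world can carry out the ordinal arithmetic needed to express the dial and to crank the ratchet upward through $\omegaCK,\omega_2^{\mathrm{CK}},\dots$ as one ascends. Theorem~\ref{Theorem.Ratchets+switches=S4.3} then yields $\Val_\mathcal{V}(V_\beta,\mathcal{L}_\in)\subseteq S4.3$, and combined with the lower bound of theorem~\ref{Theorem.rank-potentialism-bounds} this forces $\Val_\mathcal{V}(V_\beta,\mathcal{L}_\in)=S4.3$. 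The chain of inclusions then collapses, so that $\Val_\mathcal{V}(V_\beta,\mathcal{L}^\Diamond_{\in,V_\beta})=\Val_\mathcal{V}(V_\beta,\mathcal{L}_\in)=S4.3$ as well.

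The main obstacle is the choice of milestone. For each $r_i$ to be a genuine (pure) button, the property of ``being one of the counted ordinals below the current height'' must be upward absolute from $V_\beta$ to every taller $V_{\beta'}$. This is precisely why I count admissibles rather than, say, the cardinal levels or the $\Sigma_n$-correct levels of $V_\beta$: admissibility of $\gamma$ is an internal property of $L_\gamma$ and is therefore computed identically in every $V_{\beta'}$ with $\beta'>\gamma$, whereas cardinality or correctness relative to the current top can be destroyed as the universe grows taller, which would break the ratchet. The secondary point requiring care is the verification of mutual independence of the ratchet with the height dial, which rests on the sparseness of the admissibles between successive limit blocks.
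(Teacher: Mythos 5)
Your argument is correct and follows essentially the same route as the paper's proof: an arbitrarily large finite ratchet of parameter-free $\mathcal{L}_\in$-sentences, mutually independent with the ordinal-height dial of theorem~\ref{Theorem.rank-potentialism-bounds}, fed into theorem~\ref{Theorem.Ratchets+switches=S4.3}. The only difference is the choice of milestones: the paper uses the more elementary ratchet $r_k=$ ``the ordinal $\omega\cdot k$ exists'' (counting limit ordinals), which then requires passing to the dial taken modulo $n$ to preserve independence, whereas your sparser admissible-ordinal milestones let you keep the raw height dial; both work.
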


\begin{proof}
Let $r_k$ be the assertion that the ordinal $\omega\cdot k$ exists. These statements are expressible in the language of set theory without parameters, since $r_k$ is equivalent in any $V_\beta$ to the assertion that there are at least $k$ distinct limit ordinals. The statements constitute a ratchet for the system of rank potentialism, since once the ordinal $\omega\cdot k$ exists in $V_\beta$, then all smaller $\omega\cdot r$ also exist; they continue to exist in all larger $V_\delta$; and if $\omega\cdot k$ does not yet exist in $V_\beta$, then we can extend $V_\beta$ to $V_{\omega\cdot k+1}$, where $\omega\cdot k$ exists but $\omega\cdot(k+1)$ does not yet exist. For any fixed $n<\omega$ and any $i<n$, let $d_i^*$ assert that the dial value $d_j$ of the previous theorem has $i=j\mod n$. That is, we use the dial values of the previous theorem modulo $n$. The point is that we can realize any dial value for $d_i^*$ without raising the ratchet volume, since we simply move to the next block of size $n$, and so we have independent ratchet and dials of any desired size, and these are expressible in the language of set theory without parameters or modal operators. It follows by theorem~\ref{Theorem.Ratchets+switches=S4.3} that the validities of $V_\beta$ for the language of set theory are precisely S4.3, for sufficiently small $\beta$, as desired.
\end{proof}

We could have alternatively proved theorem~\ref{Theorem.Rank-potentialism-lower-bound} by using a long ratchet and applying theorem~\ref{Theorem.Long-ratchet=S4.3}. Indeed, the proof we gave amounts to using the method described in theorem~\ref{Theorem.Long-ratchet=S4.3} with the ratchet $r_\alpha=$``$\alpha$ exists.''

\begin{corollary}
The modal logic of rank potentialism with respect to the language of set theory---the propositional modal assertions valid in all worlds---is exactly $\Val_\mathcal{V}(\mathcal{L}_\in) = S4.3$. Furthermore, $\Val_\mathcal{V}(\mathcal{L}^+_\in) = S4.3$ for any language $\mathcal{L}^+_\in$ extending the language of set theory $\mathcal{L}_\in$ by parameters, relations or modal vocabulary.
\end{corollary}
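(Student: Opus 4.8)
The plan is to assemble the corollary from the two preceding rank-potentialism theorems, using the fact that $\Val_\mathcal{V}(\mathcal{L})$ is by definition the intersection $\bigcap_\beta\Val_\mathcal{V}(V_\beta,\mathcal{L})$ of the validities over all worlds $V_\beta$. For the first equation I would argue the two inclusions separately. The containment $S4.3\of\Val_\mathcal{V}(\mathcal{L}_\in)$ is immediate from theorem~\ref{Theorem.lower-bounds}: the worlds $V_\beta$ are linearly ordered by the accessibility relation, so every assertion of S4.3 is valid at each one, hence valid at all of them. For the reverse inclusion I would invoke theorem~\ref{Theorem.Rank-potentialism-lower-bound}, which supplies at least one world $V_\beta$ (any sufficiently small one) with $\Val_\mathcal{V}(V_\beta,\mathcal{L}_\in)=S4.3$. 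Since validity in all worlds is a stronger requirement than validity at a single world, we have $\Val_\mathcal{V}(\mathcal{L}_\in)\of\Val_\mathcal{V}(V_\beta,\mathcal{L}_\in)$ for that particular $\beta$, giving $\Val_\mathcal{V}(\mathcal{L}_\in)\of S4.3$. The two inclusions combine to the desired equality.

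For the ``furthermore'' clause I would treat an arbitrary language $\mathcal{L}^+_\in\fo\mathcal{L}_\in$ and again argue both directions. The lower bound $S4.3\of\Val_\mathcal{V}(\mathcal{L}^+_\in)$ comes once more from theorem~\ref{Theorem.lower-bounds}: the validity of the $(.3)$ axiom, and the rest of S4.3, is a property of the linear frame and therefore holds for substitution instances drawn from any class, however expressive the ambient language. The upper bound is where the language-monotonicity of validity does the work. Since $\mathcal{L}_\in\of\mathcal{L}^+_\in$, the displayed monotonicity principle of section~\ref{Section.semantics-of-potentialism} gives $\Val_\mathcal{V}(V_\beta,\mathcal{L}^+_\in)\of\Val_\mathcal{V}(V_\beta,\mathcal{L}_\in)$, and for the special small $\beta$ of theorem~\ref{Theorem.Rank-potentialism-lower-bound} this last set equals $S4.3$. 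Intersecting over worlds then yields $\Val_\mathcal{V}(\mathcal{L}^+_\in)\of\Val_\mathcal{V}(V_\beta,\mathcal{L}^+_\in)\of S4.3$, which with the lower bound gives $\Val_\mathcal{V}(\mathcal{L}^+_\in)=S4.3$.

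There is no serious obstacle here; the corollary is essentially bookkeeping on top of theorems~\ref{Theorem.rank-potentialism-bounds} and~\ref{Theorem.Rank-potentialism-lower-bound}. The one point deserving care is the direction of the language inequality---passing to a \emph{more} expressive language can only \emph{shrink} the set of validities, since it imposes more substitution instances to be satisfied---so enlarging $\mathcal{L}_\in$ to $\mathcal{L}^+_\in$ cannot create new validities beyond the S4.3 already pinned down at a small world. Equivalently, and perhaps more transparently, I could observe that the failing substitution instances produced in the proof of theorem~\ref{Theorem.Rank-potentialism-lower-bound} are built entirely from the ratchet statements ``$\omega\cdot k$ exists'' and the derived dial statements, which are parameter-free sentences of $\mathcal{L}_\in$; these same witnesses are automatically $\mathcal{L}^+_\in$-substitution instances, so every $\varphi\notin S4.3$ already fails with respect to $\mathcal{L}^+_\in$ at the relevant world. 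Either phrasing closes the upper bound for all the extended languages simultaneously.
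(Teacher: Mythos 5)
Your proposal is correct and takes essentially the same route as the paper: the lower bound comes from linearity via theorem~\ref{Theorem.lower-bounds} (a frame property, hence insensitive to the language of substitution instances), and the upper bound comes from the single world of theorem~\ref{Theorem.Rank-potentialism-lower-bound} validating only S4.3, with language-monotonicity (or, as you note, the observation that the failing witnesses are parameter-free $\mathcal{L}_\in$-sentences) handling the extended languages. The paper's proof is just a terser version of the same bookkeeping.
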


\begin{proof}
Theorems~\ref{Theorem.rank-potentialism-bounds} and~\ref{Theorem.Rank-potentialism-lower-bound} show that S4.3 is valid in all rank-potentialist worlds $V_\beta$, and there is some world realizing only the validities in S4.3. So it is precisely S4.3 that is valid in all worlds, and consequently $\Val_\mathcal{V}(\mathcal{L}_\in) = S4.3$. It follows by theorem~\ref{Theorem.lower-bounds} that $\Val_\mathcal{V}(\mathcal{L}_\in^+) = S4.3$ for any extension of the language, since the $V_\beta$ are linearly ordered.
\end{proof}

Let us turn now to the upper bound of theorem~\ref{Theorem.rank-potentialism-bounds}, which also is sharp. Indeed, it is sharp in a way that we find interesting and attractive, for the worlds realizing the upper bound fulfill what we call the potentialist maximality principle, an interesting set-theoretic principle of its own.

\begin{definition}\rm
Let us define that a world $W$ in a potentialist system $\mathcal{W}$ fulfills the {\df potentialist maximality principle} for a class of assertions, if the modal theory S5 is valid at $W$ for those assertions under the potentialist semantics.
\end{definition}

Since S4 is valid in any potentialist system by theorem~\ref{Theorem.lower-bounds}, the potentialist maximality principle amounts to the validity of axiom 5
 $$\possible\necessary\varphi\implies\varphi,$$
which expresses a kind of maximality principle, since it asserts that whenever assertion $\varphi$ is potentially necessarily true, then it is already actually true. Under this principle, the collection of potentially necessarily true statements that are actually true is maximized.

An ordinal $\delta$ is {\df $\Sigma_3$-correct}, if $V_\delta\elesub_{\Sigma_3} V$, meaning that $V_\delta$ and $V$ agree on the truth of $\Sigma_3$ formulas with parameters from $V_\delta$. Any such ordinal $\delta$ is a $\beth$-fixed point and a limit of such fixed points, and more. The usual proof of the \Levy-Montague reflection theorem shows that the class of all $\Sigma_3$-correct cardinals, denoted $C^{(3)}$, is closed and unbounded in the ordinals.

\goodbreak
\begin{theorem}\label{Theorem.rank-potentialism-S5-language-of-set-theory}
The following are equivalent for any ordinal $\delta$:
 \begin{enumerate}
   \item $V_\delta$ satisfies the rank-potentialism maximality principle for assertions in the language of set theory with parameters from $V_\delta$. That is,
        $$\Val_{\mathcal{V}}(V_\delta,\mathcal{L}_{\in,V_\delta})=S5.$$
   \item $\delta$ is $\Sigma_3$-correct.
 \end{enumerate}
\end{theorem}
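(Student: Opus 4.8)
The plan is to reduce the assertion $\Val_{\mathcal V}(V_\delta,\mathcal L_{\in,V_\delta})=S5$ to a single concretely checkable scheme. By theorem~\ref{Theorem.rank-potentialism-bounds} we already have $S4.3\of\Val_{\mathcal V}(V_\delta,\mathcal L_{\in,V_\delta})\of S5$, so the content of~(1) is exactly that the characteristic S5 axiom is valid at $V_\delta$ for substitution instances from $\mathcal L_{\in,V_\delta}$. Taken in the form $\possible p\to\necessary\possible p$, validity of this axiom at $V_\delta$ propagates upward along the (linear) accessibility relation and so in fact secures all of S5 there; hence I would first reduce to this one scheme. Unwinding the potentialist semantics, it says: for every first-order $\varphi$ with parameters in $V_\delta$, if $V_\delta\satisfies_{\mathcal V}\possible\necessary\varphi$, meaning there is some $\gamma\geq\delta$ with $V_\eta\satisfies\varphi$ for all $\eta\geq\gamma$, then $V_\delta\satisfies\varphi$.

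The key technical point is that, for a fixed first-order $\varphi$, the relation $V_\eta\satisfies\varphi(\bar a)$ is $\Delta_1$ in the parameters $\eta,\bar a$, since satisfaction in the set model $V_\eta$ is absolute and the map $\eta\mapsto V_\eta$ has $\Delta_1$ graph. Consequently the assertion that $\varphi$ eventually becomes permanently true, namely $\exists\gamma\,\forall\eta\geq\gamma\,(V_\eta\satisfies\varphi)$, is $\Sigma_2$ over $V$ (in the parameters $\bar a$), while the assertion that $\varphi$ is true cofinally often, namely $\forall\gamma\,\exists\eta\geq\gamma\,(V_\eta\satisfies\varphi)$, is $\Pi_2$. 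It is through these low-complexity reformulations that the \Levy--Montague correctness of $\delta$ can interact with the modal operators.

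For the direction $(2)\Rightarrow(1)$, assuming $V_\delta\elesub_{\Sigma_3}V$, I would verify the scheme contrapositively. Suppose $V_\delta\satisfies\neg\varphi$. Then for each $\gamma<\delta$ the $\Sigma_1$ statement $\exists\eta>\gamma\,(V_\eta\satisfies\neg\varphi)$ is true in $V$, witnessed by $\eta=\delta$ itself, so by $\Sigma_1$-correctness it holds in $V_\delta$, producing some $\eta\in(\gamma,\delta)$ with $V_\eta\satisfies\neg\varphi$. Thus $\neg\varphi$ is cofinal below $\delta$, which is exactly $V_\delta\satisfies\Theta$ for the $\Pi_2$ assertion $\Theta=$``$\neg\varphi$ holds cofinally.'' By $\Pi_2$-correctness $V\satisfies\Theta$, so $\neg\varphi$ is cofinal in the whole hierarchy and $\varphi$ is not eventually permanently true. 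This is the contrapositive of the scheme, so the $.5$ axiom holds and $\Val_{\mathcal V}(V_\delta,\mathcal L_{\in,V_\delta})=S5$.

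For the harder direction $(1)\Rightarrow(2)$, I would argue contrapositively: from $V_\delta\not\elesub_{\Sigma_3}V$ I would manufacture an $\mathcal L_{\in,V_\delta}$-assertion $\varphi$ witnessing a failure of the $.5$ axiom at $V_\delta$. The $\Sigma_2$-reflection extracted above shows the only available disagreement is a true-in-$V$ instance $\exists x\,\vartheta(x)$ with $\vartheta\in\Pi_2$ all of whose witnesses have rank $\geq\delta$, so the task is to convert this late appearance of a witness into an assertion that is possibly necessary yet false at $V_\delta$, exploiting that the $\Pi_2$ matrix $\vartheta$ is, by the cofinality analysis, governed by the $V_\eta$'s. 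The main obstacle is precisely this construction: producing, from a $\Sigma_3$ failure, a single assertion whose pattern of truth across the $V_\eta$ stabilizes—so that it is possibly necessary—while its value at the base world $V_\delta$ is wrong. This is where the third level of the hierarchy, rather than the second, is expected to be genuinely needed, and where the delicate interplay between the ranks of the $\vartheta$-witnesses and the cofinal truth of $\vartheta$ must be made to cooperate.
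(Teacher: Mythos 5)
Your direction $(1)\Rightarrow(2)$ is not a proof: you explicitly stop at ``the main obstacle is precisely this construction,'' and that construction is the entire content of the direction. The obstacle dissolves, however, if you argue directly rather than contrapositively and use the \Levy\ normal form for $\Sigma_3$ assertions: every $\Sigma_3$ statement is equivalent to one of the form $\exists x\,\forall\beta\,(V_\beta\satisfies\psi(x,\vec a))$, and \emph{this assertion itself} is the possibly-necessary statement you are looking for---once the hierarchy is tall enough to contain a witness $x$, the assertion becomes true in $V_\theta$ and remains true in every taller $V_{\theta'}$, so $V_\delta\satisfies\possible\necessary(\exists x\,\forall\beta\, V_\beta\satisfies\psi(x,\vec a))$, and axiom~5 hands you a witness inside $V_\delta$. (You also need the preliminary step, which you omit: instances of axiom~5 applied to the buttons ``$\beth_\alpha$ exists'' show $\delta$ is a $\beth$-fixed point, whence $V_\delta=H_\delta$ and $\delta$ is $\Sigma_1$-correct by \Levy\ reflection, which is what makes the normal form and the upward $\Sigma_2$/$\Pi_2$ transfer available.) This is exactly the paper's argument, and it is short; your plan to ``manufacture a failing instance from a $\Sigma_3$ failure'' makes the direction look harder than it is.

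In direction $(2)\Rightarrow(1)$ your outline is workable (it is the contrapositive of the paper's direct argument, which reflects the $\Sigma_3$ assertion ``eventually permanently $\varphi$'' down to get a witness $\lambda<\delta$ and then transfers the $\Pi_2$ fact about that $\lambda$ back up), but your complexity bookkeeping is off by one level throughout, and not harmlessly so. The assertion $\exists\eta>\gamma\,(V_\eta\satisfies\neg\varphi)$ is $\Sigma_2$, not $\Sigma_1$: asserting the existence of the set $V_\eta$ is a locally verifiable, hence $\Sigma_2$, matter (``$x=V_\eta$'' is $\Pi_1$, not $\Delta_0$ or $\Sigma_1$). Consequently your $\Theta=$``$\neg\varphi$ holds cofinally'' is $\Pi_3$, not $\Pi_2$---it is precisely the negation of the paper's $\Sigma_3$ assertion. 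The argument survives because $\Sigma_3$-correctness subsumes $\Pi_3$-correctness, but as literally written your proof purports to derive the maximality principle from mere $\Pi_2$-correctness of $\delta$; combined with the (correct) converse this would show every $\Sigma_2$-correct cardinal is $\Sigma_3$-correct, which is false. This sanity check is worth internalizing: the theorem is sharp at $\Sigma_3$ exactly because ``eventually permanently true'' genuinely sits at the third level. Finally, your reduction of full S5-validity to the single axiom-5 scheme is glossed about as briefly as in the paper; note that what the arguments actually deliver and need is the strong form $\possible\necessary\varphi\to\necessary\varphi$, which then holds at every world above $V_\delta$ as well.
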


\begin{proof}
($2\implies 1$) Assume that $\delta$ is $\Sigma_3$-correct. Since we already know that S4.3 is valid at $V_\delta$, it suffices to verify the validity of axiom 5. So suppose that $V_\delta\satisfies \possible\necessary\varphi(\vec a)$ for some assertion $\varphi$ in the language of set theory with parameters $\vec a\in V_\delta$, using the rank-potentialist semantics. This means that there is some  $\lambda\geq\delta$, such that for all $\theta\geq\lambda$ we have $V_\theta\satisfies\varphi(\vec a)$. The assertion $\exists \lambda\,\forall\theta\geq\lambda\,V_\theta\satisfies\varphi(\vec a)$ has complexity $\Sigma_3(\vec a)$ in the language of set theory\footnote{The first author has a summary blog post at~\cite{Hamkins.blog2014:Local-properties-in-set-theory}, which some readers may find helpful, concerning locally verifiable properties in set theory and the accompanying characterization of $\Sigma_2$ and $\Pi_2$ assertions.}, and so this assertion must be true in $V_\delta$, thereby verifying this instance of S5. And so the potentialist maximality principle is true at $V_\delta$ for assertions in the language of set theory with parameters from $V_\delta$.

($1\implies 2$) Conversely, assume that S5 is valid in $V_\delta$ for assertions in the language of set theory with parameters in $V_\delta$. This implies that $\delta$ is a $\beth$-fixed point, since for any ordinal $\alpha$ it is possibly necessary that $\beth_\alpha$ exists (one need only grow the universe sufficiently tall), and therefore, for any $\alpha<\delta$ we know that $\beth_\alpha$ exists already in $V_\delta$, and so $\delta=\beth_\delta$. This implies $V_\delta=H_\delta$ and so $V_\delta\elesub_{\Sigma_1} V$ by the \Levy\ reflection theorem; so $\delta$ is $\Sigma_1$-correct. A similar argument works more generally to establish $\Sigma_3$-correctness. Specifically, suppose that a $\Sigma_3$ assertion holds in $V$, using some parameters $\vec a\in V_\delta$. Every $\Sigma_3$ assertion is equivalent to an assertion of the form $\exists x\forall \beta\,V_\beta\satisfies\psi(x,\vec a)$. Thus, $V_\delta\satisfies\possible\necessary\exists x\forall\beta\,V_\beta\satisfies\psi(x,\vec a)$, since one need only grow the universe sufficiently tall until the witness $x$ is found. By S5, we may conclude that $V_\delta\satisfies\exists x\forall\beta\, V_\beta\satisfies\psi(x,\vec a)$, showing that the $\Sigma_3$ assertion holds in $V_\delta$. So $\delta$ is $\Sigma_3$-correct, as desired.
\end{proof}

\begin{corollary}
 The upper bound of theorem~\ref{Theorem.rank-potentialism-bounds} is sharp, for there are worlds $V_\delta$ with
 $$\Val_{\mathcal{V}}(V_\delta,\mathcal{L}_{\in,V_\delta})=\Val_\mathcal{V}(V_\delta,\mathcal{L}_\in)=S5.$$
\end{corollary}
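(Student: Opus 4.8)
The plan is to observe that the genuine content has already been established in theorem~\ref{Theorem.rank-potentialism-S5-language-of-set-theory}, so that the corollary follows by a short squeezing argument exploiting the orientation of the parameter-inclusions. First I would fix an ordinal $\delta$ that is $\Sigma_3$-correct; such ordinals exist in abundance, since by the \Levy--Montague reflection argument recalled above the class $C^{(3)}$ of $\Sigma_3$-correct ordinals is closed and unbounded in the ordinals, and in particular nonempty. (Any member of $C^{(3)}$ will do as a witness $V_\delta$.)

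By theorem~\ref{Theorem.rank-potentialism-S5-language-of-set-theory}, the $\Sigma_3$-correctness of $\delta$ yields $\Val_{\mathcal{V}}(V_\delta,\mathcal{L}_{\in,V_\delta})=S5$; that is, the potentialist maximality principle already holds at $V_\delta$ even when parameters from $V_\delta$ are permitted. The key point is that allowing parameters is the more severe requirement, so $\Val_{\mathcal{V}}(V_\delta,\mathcal{L}_{\in,V_\delta})$ is the \emph{smaller} of the two validity sets, and we obtain the chain
$$S5 \ = \ \Val_{\mathcal{V}}(V_\delta,\mathcal{L}_{\in,V_\delta}) \ \of \ \Val_\mathcal{V}(V_\delta,\mathcal{L}_\in) \ \of \ S5,$$
where the central inclusion records that parameter-free validity is the less demanding notion and the final inclusion is exactly the upper bound furnished by theorem~\ref{Theorem.rank-potentialism-bounds}. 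Since both ends of this chain equal $S5$, every set sandwiched between them must also equal $S5$, and in particular $\Val_\mathcal{V}(V_\delta,\mathcal{L}_\in)=S5$, as desired.

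There is no real obstacle here: all of the substantive work lies in theorem~\ref{Theorem.rank-potentialism-S5-language-of-set-theory}, and the only thing to watch is the direction of the parameter-inclusion---namely that the parameter-free validities form the \emph{larger} set, so that they are the ones being squeezed from above. Conceptually, the content is that a single witness, a $\Sigma_3$-correct $V_\delta$, simultaneously realizes the maximality principle in both the parameter-free language $\mathcal{L}_\in$ and the parameter-allowing language $\mathcal{L}_{\in,V_\delta}$, thereby exhibiting the upper bound $S5$ of theorem~\ref{Theorem.rank-potentialism-bounds} as attained.
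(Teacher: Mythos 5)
Your proposal is correct and follows essentially the same route as the paper: the paper's own proof simply cites theorem~\ref{Theorem.rank-potentialism-S5-language-of-set-theory} together with the fact that the $\Sigma_3$-correct cardinals form a proper-class club, leaving the squeezing argument via $\Val_{\mathcal{V}}(V_\delta,\mathcal{L}_{\in,V_\delta})\of\Val_\mathcal{V}(V_\delta,\mathcal{L}_\in)\of S5$ implicit. You have merely made that final step explicit, and you have the direction of the parameter-inclusion right.
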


\begin{proof}
  Theorem~\ref{Theorem.rank-potentialism-S5-language-of-set-theory} shows that this is true for any $\Sigma_3$-correct cardinal $\delta$, and there is a proper-class club of such cardinals.
\end{proof}

The maximality principle for the full potentialist language $\mathcal{L}_\in^\Diamond$, not just the language $\mathcal{L}_\in$ of set theory, turns out to be strictly stronger. Indeed, the following theorem shows that the full maximality principle at a world $V_\delta$ reveals that this $V_\delta$ is in a sense a miniature replica of the full ambient universe $V$ in which it sits, having all the same truths about the objects in $V_\delta$. A cardinal $\delta$ is {\df correct}, if it is $\Sigma_n$-correct for every $n$, or in other words, if it realizes the scheme $V_\delta\elesub V$. In light of Tarski's theorem on the non-definability of truth, this concept is not expressible as a single assertion in the language of set theory, although it can be expressed as a scheme of statements, the assertion that $\delta$ realizes a certain type.

\goodbreak
\begin{theorem}\label{Theorem.rank-potentialist-S5-potentialist-language}
The following schemes are equivalent for any ordinal $\delta$:
 \begin{enumerate}
   \item $V_\delta$ satisfies the rank-potentialist maximality principle for assertions in the potentialist language $\mathcal{L}_{\in,V_\delta}^\Diamond$ allowing parameters from $V_\delta$. That is,
         $$\Val_{\mathcal{V}}(V_\delta,\mathcal{L}_{\in,V_\delta}^\Diamond)=S5.$$
   \item $\delta$ is a correct cardinal.
 \end{enumerate}
\end{theorem}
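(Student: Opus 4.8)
The plan is to follow the template of Theorem~\ref{Theorem.rank-potentialism-S5-language-of-set-theory}: since S4.3 is already valid at every world of $\mathcal{V}$ by Theorem~\ref{Theorem.lower-bounds}, it suffices in the direction ($2\implies 1$) to verify axiom~5, and in the direction ($1\implies 2$) to squeeze full correctness out of the extra modal expressive power. Two observations organize everything. First, for each fixed modal formula $\varphi$ the potentialist satisfaction relation is internally first-order: there is an $\mathcal{L}_\in$-formula $\hat\varphi(\beta,\vec x)$, defined by the obvious recursion (with $\possible$ and $\necessary$ becoming $\exists\gamma\geq\beta$ and $\forall\gamma\geq\beta$ over ranks, and quantifiers relativized to $V_\beta$), such that $V\satisfies\hat\varphi(\beta,\vec a)\Iff V_\beta\satisfies_{\mathcal{V}}\varphi(\vec a)$. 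Second, by the potentialist translation (Theorem~\ref{Theorem.Potentialist-translation}) every translated formula $\psi^\Diamond$ is \emph{stable}: the value of $V_\beta\satisfies_{\mathcal{V}}\psi^\Diamond(\vec b)\Iff V\satisfies\psi(\vec b)$ is the same at every world containing $\vec b$.

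For ($2\implies 1$), assume $\delta$ is correct, so $V_\delta\elesub V$, and suppose $V_\delta\satisfies_{\mathcal{V}}\possible\necessary\varphi(\vec a)$ with $\vec a\in V_\delta$. Unwinding through $\hat\varphi$, this says $V\satisfies\exists\lambda\,\forall\theta\geq\lambda\,\hat\varphi(\theta,\vec a)$; that is, $\varphi$ is eventually permanently true as the rank grows, so there is a least stabilization ordinal $\lambda_0$, defined by a fixed $\mathcal{L}_\in(\vec a)$-formula $\rho(\lambda,\vec a)$. The key point is that a correct $\delta$ is closed under ordinals definable from parameters in $V_\delta$: since $V\satisfies\exists\lambda\,\rho(\lambda,\vec a)$ and $V_\delta\elesub V$, there is $\lambda^*<\delta$ with $V_\delta\satisfies\rho(\lambda^*,\vec a)$, whence $V\satisfies\rho(\lambda^*,\vec a)$ by elementarity, and uniqueness of the stabilization point forces $\lambda_0=\lambda^*<\delta$. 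Now instantiating $\forall\theta\geq\lambda_0\,\hat\varphi(\theta,\vec a)$ at $\theta=\delta\geq\lambda_0$ yields $V\satisfies\hat\varphi(\delta,\vec a)$, i.e.\ $V_\delta\satisfies_{\mathcal{V}}\varphi(\vec a)$, verifying this instance of axiom~5.

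For ($1\implies 2$), I would prove $V_\delta\elesub V$ by induction on the $\mathcal{L}_\in$-formula $\psi$, showing $V_\delta\satisfies\psi(\vec a)\Iff V\satisfies\psi(\vec a)$ for $\vec a\in V_\delta$. The atomic and Boolean cases are immediate and the upward direction of the quantifier case is routine, so the work is to reflect an existential witness downward: if $V\satisfies\exists x\,\psi(x,\vec a)$ then, by the potentialist translation, $V_\delta\satisfies_{\mathcal{V}}\possible\exists x\,\psi^\Diamond(x,\vec a)$. Because $\psi^\Diamond$ is stable, once a witness for $\exists x\,\psi^\Diamond(x,\vec a)$ appears it persists and the formula stays satisfied, so this formula is necessary at any world where it holds; hence $V_\delta\satisfies_{\mathcal{V}}\possible\necessary\exists x\,\psi^\Diamond(x,\vec a)$. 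Applying S5 (valid at $V_\delta$ by hypothesis~(1)) gives $V_\delta\satisfies_{\mathcal{V}}\exists x\,\psi^\Diamond(x,\vec a)$, so some $b\in V_\delta$ has $V_\delta\satisfies_{\mathcal{V}}\psi^\Diamond(b,\vec a)$, i.e.\ $V\satisfies\psi(b,\vec a)$; the induction hypothesis then delivers $V_\delta\satisfies\psi(b,\vec a)$ and hence $V_\delta\satisfies\exists x\,\psi(x,\vec a)$. The universal case follows by duality.

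I expect the main obstacle to be the self-referential role of the top world in ($2\implies 1$): the truth of $\varphi$ at $V_\delta$ is $\hat\varphi(\delta,\vec a)$, but $\delta\notin V_\delta$, so correctness cannot be applied to it directly. The decisive move is to relocate the content into the \emph{stabilization ordinal} $\lambda_0$, which \emph{is} a parameter-definable object that elementarity pins down below $\delta$; once $\lambda_0<\delta$ is secured, instantiating the permanent truth at $\theta=\delta$ closes the gap. A secondary point requiring care is that this reflection must work at arbitrary modal depth, which is exactly why the full (rather than $\Sigma_3$) correctness scheme appears: each modal formula $\varphi$ produces its own $\hat\varphi$ of some fixed but unbounded complexity, and $V_\delta\elesub V$ is precisely what reflects all of them simultaneously.
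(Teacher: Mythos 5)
Your proposal is correct and follows essentially the same route as the paper's proof: for ($2\implies 1$) you exploit the first-order expressibility of the rank-potentialist satisfaction relation together with $V_\delta\elesub V$ to reflect the stabilization ordinal below $\delta$ and then instantiate at $\theta=\delta$, and for ($1\implies 2$) you use the stability of the potentialist translation $\psi^\Diamond$ to turn existential witnessing into a possibly-necessary assertion and apply S5 to pull the witness into $V_\delta$. The only cosmetic difference is that you induct on formulas where the paper inducts on the $\Sigma_n$ levels, which amounts to the same argument.
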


\begin{proof}
($2\implies 1$) Assume that $\delta$ is correct, so that we have the scheme $V_\delta\elesub V$, and suppose that $\possible\necessary\varphi(\vec a)$ is true at $V_\delta$, where $\varphi$ is an assertion in the potentialist language $\mathcal{L}_\in^\Diamond$ and $\vec a\in V_\delta$. Thus, $\exists \lambda\forall\theta\geq\lambda\, V_\theta\satisfies\varphi(\vec a)$ is true in $V$. Since the rank-potentialist modalities are expressible in the language of set theory, it follows from the $V_\delta\elesub V$ scheme that $V_\delta\satisfies\exists\lambda\forall\theta\geq\lambda\, V_\theta\satisfies\varphi(\vec a)$. Note that since $V_\delta\elesub V$, it follows that $V_\delta$ and $V$ agree on whether some particular $V_\theta$ satisfies a given rank-potentialist assertion. Since $V_\delta\elesub V$, it follows that the witness $\lambda<\delta$ also works in $V$, and so $V_\delta\satisfies\varphi(\vec a)$, verifying this instance of S5. So $V_\delta$ satisfies the desired potentialist maximality principle.

($1\implies 2$) Conversely, suppose that S5 holds at $V_\delta$ for assertions in the potentialist language $\mathcal{L}_\in^\Diamond$ with parameters in $V_\delta$. We know from statement 1 that $\delta$ is $\Sigma_3$ correct. Suppose that $\delta$ is $\Sigma_n$-correct, and that a $\Sigma_{n+1}$ assertion is true in $V$. So $V\satisfies\exists x\,\psi(x,\vec a)$, where $\psi$ has complexity $\Pi_n$. By theorem~\ref{Theorem.Potentialist-translation}, the set-theoretic assertion $\psi(x,\vec a)$ is true in $V$ if and only if $\psi^\Diamond(x,\vec a)$ is true at any $V_\beta$ containing $x$ and $\vec a$. Thus, $V_\delta\satisfies\possible\necessary\exists x\,\psi^\Diamond(x,\vec a)$, since once one goes high enough, the witness $x$ from $V$ will exists. By S5, therefore, there is $x\in V_\delta$ with $V_\delta\satisfies\psi^\Diamond(x,\vec a)$, which means that $V\satisfies\psi(x,\vec a)$, and so we have found the desired witness $x$ inside $V_\delta$. So $\delta$ is $\Sigma_{n+1}$-correct, as desired.
\end{proof}

Although \ZFC\ proves that there are numerous $\Sigma_3$-correct cardinals $\delta$ and therefore numerous instances $V_\delta$ of the potentialist maximality principle for assertions in the language of set theory with parameters, it is meanwhile not provable in \ZFC, if consistent, that there is a cardinal $\delta$ fulfilling the scheme $V_\delta\elesub V$, and so we do not necessarily have instances of the potentialist maximality principle in the full potentialist language of set theory $\mathcal{L}_\in^\Diamond$ with parameters. Meanwhile, the $V_\delta\elesub V$ scheme is equiconsistent with and indeed conservative over \ZFC, for every model $M\satisfies\ZFC$ has an elementary extension in which this scheme is realized. To see this, consider the elementary diagram of $M$ together with the scheme asserting $V_\delta\elesub V$, in the language with $\delta$ as a new constant symbol; every finite subtheory is satisfiable by the reflection theorem applied in $M$, and so by compactness there is an elementary extension of $M$ with a correct cardinal (see also \cite[lemma~5.4]{Hamkins2003:MaximalityPrinciple}).

Note that each of the statements in theorem~\ref{Theorem.rank-potentialist-S5-potentialist-language} is a full scheme of assertions in the language of set theory. In particular, one cannot express the concept of ``$\Sigma_n$-correct for every $n$'' by a single statement in the language of set theory; instead, one makes a separate assertion for each $n$ in the metatheory. Similarly, the assertion that a modal assertion is valid for all $\mathcal{L}_\in^\Diamond$ assertions is not a single assertion in the language of set theory, but one can assert all instances of it as a scheme. In order to formalize theorem~\ref{Theorem.rank-potentialist-S5-potentialist-language} in \ZFC, therefore, we may view it as two theorem schemes, one proving every instance of statement 2, assuming the theory expressed by statement 1, and one proving every instance of statement 1, assuming the theory expressed by statement 2. This subtle formalization issue did not arise in theorem~\ref{Theorem.rank-potentialism-S5-language-of-set-theory}, because the bounded complexity of the correctness assumption there and the fact that the arbitrary set-theoretic assertions are evaluated there only in various set structures $V_\beta$ meant we could formalize it all as a single assertion in \ZFC.

We expect (and suggest as a good graduate-student project) that one can provide a somewhat tighter analysis connecting the precise degree of correctness of $\delta$ and the modal-operator complexity of the assertions allowed to be substituted in the S5 axioms. For example, if $\delta$ is $\Sigma_4$-correct, then S5 would be valid at world $V_\delta$ with respect to substitution instances in the language $\mathcal{L}_\in^\Diamond$ having at most one modal operator at the front. As the correctness of $\delta$ improves, one can accommodate more complex modal assertions in the substitution instances, and theorem~\ref{Theorem.rank-potentialist-S5-potentialist-language} is simply the amalgamation of all these level-by-level results.

\begin{corollary}
 It is relatively consistent with \ZFC\ that the upper bound of theorem~\ref{Theorem.rank-potentialism-bounds} is sharp in a stronger way, with a world $V_\delta$ having
  $$\Val_\mathcal{V}(V_\delta,\mathcal{L}_{\in,V_\delta}^\Diamond)=
  \Val_\mathcal{V}(V_\beta,\mathcal{L}_\in)=S5.$$
\end{corollary}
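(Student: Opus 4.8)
The plan is to reduce the corollary to the characterization of the full potentialist maximality principle already established in theorem~\ref{Theorem.rank-potentialist-S5-potentialist-language}, and then to invoke the relative consistency of a correct cardinal. By that theorem, a world $V_\delta$ satisfies $\Val_\mathcal{V}(V_\delta,\mathcal{L}_{\in,V_\delta}^\Diamond)=S5$ precisely when $\delta$ is a correct cardinal, that is, when $\delta$ realizes the scheme $V_\delta\elesub V$. So it suffices to produce, consistently with \ZFC, a model possessing such a cardinal.

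First I would appeal to the compactness argument sketched in the discussion following theorem~\ref{Theorem.rank-potentialist-S5-potentialist-language}: given any $M\satisfies\ZFC$, the theory consisting of the elementary diagram of $M$ together with the scheme $V_\delta\elesub V$ (in the language augmented by a new constant symbol $\delta$) is finitely satisfiable, since each finite fragment of the scheme amounts to a single instance $V_\delta\elesub_{\Sigma_n} V$, which is witnessed in $M$ by the \Levy--Montague reflection theorem. By compactness, $M$ has an elementary extension containing a correct cardinal $\delta$. This shows not merely that a correct cardinal is relatively consistent with \ZFC\ (assuming $\Con(\ZFC)$), but that the $V_\delta\elesub V$ scheme is conservative over \ZFC. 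In any such model, theorem~\ref{Theorem.rank-potentialist-S5-potentialist-language} then delivers a world $V_\delta$ with $\Val_\mathcal{V}(V_\delta,\mathcal{L}_{\in,V_\delta}^\Diamond)=S5$.

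It remains to upgrade this to the stated double equality, and here I would use a simple squeeze. From the inclusions displayed at the start of this section we have $\Val_\mathcal{V}(V_\delta,\mathcal{L}_{\in,V_\delta}^\Diamond)\of\Val_\mathcal{V}(V_\delta,\mathcal{L}_\in)$, while the upper bound of theorem~\ref{Theorem.rank-potentialism-bounds} gives $\Val_\mathcal{V}(V_\delta,\mathcal{L}_\in)\of S5$. Since the smaller set already equals $S5$, the larger set is pinned between $S5$ and $S5$, whence $\Val_\mathcal{V}(V_\delta,\mathcal{L}_\in)=S5$ as well, completing the argument.

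The one point requiring care---indeed the only real obstacle---is the scheme-level formalization already flagged in the text: correctness of $\delta$ and the validity of $S5$ for all $\mathcal{L}_{\in,V_\delta}^\Diamond$-substitutions are each expressible only as schemes, not as single $\mathcal{L}_\in$-sentences. Accordingly the assertion being proved is a relative consistency (equivalently, conservativity) statement about the theory \ZFC\ together with this scheme, and the compactness argument is precisely what is needed to handle the scheme uniformly; no single-formula reflection principle in \ZFC\ would suffice, by Tarski's theorem on the non-definability of truth.
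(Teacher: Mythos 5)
Your proposal is correct and follows essentially the same route as the paper: invoke theorem~\ref{Theorem.rank-potentialist-S5-potentialist-language} to reduce the claim to the existence of a correct cardinal, and then use the compactness/conservativity argument (already sketched in the paper's surrounding discussion) to get relative consistency. The extra details you supply---the squeeze between the two validity sets and the remark about scheme-level formalization---are accurate elaborations of points the paper leaves implicit.
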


\begin{proof}
Theorem~\ref{Theorem.rank-potentialist-S5-potentialist-language} shows that this is true exactly at the (fully) correct cardinals $\delta$. The existence of a correct cardinal is independent of but equiconsistent with \ZFC.
\end{proof}

\subsection{Variations on set-theoretic rank potentialism}

Some height-potentialists may prefer a more restrictive class of universe fragments. For example, perhaps one wants to consider $V_\beta$ only when $\beta$ is itself a strong limit cardinal, or a $\Sigma_n$-correct cardinal, or where $V_\beta$ exhibits some other feature, such as satisfying some definable theory $T$. These situations are unified by the case where we have a proper class of ordinals $A\of\Ord$ and we consider the potentialist system
    $$\mathcal{V}_A=\set{V_\beta\mid \beta\in A}.$$

\begin{theorem}\label{Theorem.Definable-class-A-rank-potentialism}
 If $A\of\Ord$ is a definable proper class of ordinals in the set-theoretic universe $V$, and the definition of $A$ is absolute to $V_\beta$ for all $\beta\in A$, then every world $V_\beta$ in the relativized rank-potentialist system $\mathcal{V}_A$ obeys
    $$S4.3 \ \of\ \Val_\mathcal{V}(V_\beta,\mathcal{L}^\Diamond_{\in,V_\beta})  \ \of \ \Val_\mathcal{V}(V_\beta,\mathcal{L}_\in) \ \of \ S5.$$
 If $A$ is definable but not necessarily absolute, then nevertheless every world obeys at least
    $$S4.3 \ \of \Val_\mathcal{V}(V_\beta,\mathcal{L}^\Diamond_{\in,V_\beta})  \ \of \ \Val_\mathcal{V}(V_\beta,\mathcal{L}^\Diamond_\in)\ \of \ S5.$$
 Furthermore, the lower bounds in each case are realized.
\end{theorem}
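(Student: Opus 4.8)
The plan is to follow the template of theorems~\ref{Theorem.rank-potentialism-bounds} and~\ref{Theorem.Rank-potentialism-lower-bound}, replacing the role of the height $\beta$ by the \emph{$A$-index} of $\beta$, namely the order type $\gamma_\beta$ of $A\cap\beta$. Two of the inclusions are immediate in both cases and for every language. The lower inclusion $S4.3\of\Val_{\mathcal{V}_A}$ holds because the worlds $V_\beta$ for $\beta\in A$ are linearly ordered by the ordinals, so statement~4 of theorem~\ref{Theorem.lower-bounds} applies with respect to $\mathcal{L}^\Diamond_{\in,V_\beta}$. The central inclusions are again instances of the monotonicity principle that validity against a less expressive class of substitution instances is a weaker requirement. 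So the entire content lies in the upper bound $\of S5$ and in realizing the lower bound.

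For the upper bound I would exhibit an infinite dial and invoke theorems~\ref{Theorem.Switches-S5} and~\ref{Theorem.Switches-iff-dials}. As the dial I take the finite part of the $A$-index: writing $\gamma_\beta=\lambda+j$ with $\lambda$ a limit or zero and $j<\omega$, let $d_j$ assert that $A\cap(\text{current height})$ has order type with finite part $j$. These partition truth, since $\gamma_\beta$ has a unique finite part, and any value is realizable from any world: because $A$ is a proper class, the indices $\gamma_{\beta'}$ for $\beta'\in A$ above $\beta$ range over all ordinals exceeding $\gamma_\beta$, and in particular over ordinals of every prescribed finite part.

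The one genuine point, and the step I expect to be the main obstacle, is \emph{expressing} $d_j$. When the definition $\varphi_A$ of $A$ is absolute to the worlds $V_\beta$ for $\beta\in A$ (the first case), each $V_\beta$ computes $A\cap\beta=A^{V_\beta}$ correctly, and the finite part of its order type is an $\mathcal{L}_\in$-assertion without parameters: one need only speak of the top finitely many elements of $A\cap\beta$ (is there a maximum, is the maximum of what lies below it again a maximum, and so on for $j$ steps, terminating at a limit or at the empty set), which keeps all quantification bounded inside $V_\beta$ and avoids having to name an order isomorphism. This yields the dial in $\mathcal{L}_\in$, hence $\Val_{\mathcal{V}_A}(V_\beta,\mathcal{L}_\in)\of S5$. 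When $\varphi_A$ is not absolute (the second case), $A^{V_\beta}$ may differ from $A\cap\beta$, and the remedy is to replace the predicate ``$x\in A$'' throughout by its potentialist translation $\varphi_A^\Diamond(x)$. Since $\mathcal{V}_A$ provides a potentialist account of $V$ (its worlds have ranks cofinal in $\Ord$ and exhaust $V$) and $\varphi_A$ defines $A$ in $V$, theorem~\ref{Theorem.Potentialist-translation} guarantees $V_\beta\satisfies_{\mathcal{V}_A}\varphi_A^\Diamond(\alpha)\iff\alpha\in A$ for every $\alpha<\beta$. Thus $V_\beta$ modally recovers $A\cap\beta$ correctly, the same top-elements formula with $\varphi_A^\Diamond$ in place of the membership predicate defines $d_j$ in $\mathcal{L}_\in^\Diamond$, and we obtain $\Val_{\mathcal{V}_A}(V_\beta,\mathcal{L}_\in^\Diamond)\of S5$.

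Finally, to realize the lower bounds I would adapt the ratchet of theorem~\ref{Theorem.Rank-potentialism-lower-bound} in the same manner. Let $r_k$ assert that $\gamma_\beta\geq\omega\cdot k$, that is, that at least $\omega\cdot k$ elements of $A$ lie below the current height; this is a pure button, unpushed at the bottom worlds, since raising the height only enlarges $A\cap\beta$, so once $r_k$ is true it is necessary, and each $r_k$ necessarily implies all $r_{k'}$ with $k'<k$. From a world where $r_k$ fails one can pass, using again that the $A$-indices are cofinal, to a world with $\omega\cdot k\leq\gamma_{\beta'}<\omega\cdot(k+1)$. Letting $d_i^*$ record the finite part of $\gamma_\beta$ modulo $n$ gives a dial mutually independent with this ratchet, because one can move within a single $\omega$-block of indices to reset the dial without raising the ratchet volume. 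As before these control statements are expressible in $\mathcal{L}_\in$ in the first case and in $\mathcal{L}_\in^\Diamond$ via $\varphi_A^\Diamond$ in the second, so theorem~\ref{Theorem.Ratchets+switches=S4.3} bounds the validities of the ratchet-volume-zero worlds by S4.3 with respect to the corresponding substitution class; together with the lower bound this yields $\Val_{\mathcal{V}_A}(V_\beta,\mathcal{L}_\in)=S4.3$ in the first case and $\Val_{\mathcal{V}_A}(V_\beta,\mathcal{L}_\in^\Diamond)=S4.3$ in the second at those bottom worlds. Equivalently, one could package $r_\alpha=$``$\gamma_\beta>\alpha$'' as a uniform long ratchet and appeal directly to theorem~\ref{Theorem.Long-ratchet=S4.3}.
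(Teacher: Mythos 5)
Your proposal is correct and follows essentially the same route as the paper: linearity gives the S4.3 lower bound, the count of elements of $A$ below the current height serves as a (long) ratchet whose finite parts furnish the dial, and the non-absolute case is repaired exactly as in the paper by replacing ``$\xi\in A$'' with its potentialist translation $(\xi\in A)^\Diamond$ at the cost of restricting to $\mathcal{L}_\in^\Diamond$-substitutions. The paper simply packages your finite ratchets and dials directly as the uniform long ratchet $r_\alpha=$ ``there are at least $\alpha$ many ordinals in $A$'' and invokes theorem~\ref{Theorem.Long-ratchet=S4.3}, which is the alternative you note at the end.
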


\begin{proof}
Since the worlds $V_\beta$ in $\mathcal{V}_A$ are linearly ordered, it follows that every assertion of S4.3 will be valid for the potentialist semantics with respect to any of the languages. To show that some worlds validate only S4.3, it suffices by theorem~\ref{Theorem.Long-ratchet=S4.3} to find uniform long ratchets. Consider first the case where the definition of $A$ is absolute to $V_\beta$ for every $\beta\in A$. For example, this would be true if $A$ was the class of strong limit cardinals, or the class of $\Sigma_n$-correct cadinals, or if $A$ was the class of $\beta$ for which $V_\beta$ satisfied a certain specific c.e.~theory $T$. For any ordinal $\alpha$, let $r_\alpha$ be the assertion that there are at least $\alpha$ many ordinals in $A$. These statements are expressible in the language of set theory and they form a long ratchet, since once they are true in some $V_\beta$, they remain true in all taller models $V_\delta$, by the absoluteness assumption on $A$; they are all false in the smallest model $V_\beta$ where $\beta$ is the least element of $A$; they have the necessary downward implications; and for any $\alpha$, we can let $\beta$ be the $(\alpha+1)^{\rm st}$ element of $A$ and observe that $V_\beta$ thinks that there are precisely $\alpha$ many elements of $A$ (again using the absoluteness of $A$); it follows that $r_\alpha$ is true in $V_\beta$ but not $r_{\alpha+1}$. Because we have therefore produced a uniform long ratchet, expressible in the language of set theory, it follows from theorem~\ref{Theorem.Long-ratchet=S4.3} that the propositional modal validities of any sufficiently small world $V_\beta$ of $\mathcal{V}_A$, that is, where the ratchets have not yet been cranked, will be exactly
    $$S4.3=\Val_{\mathcal{V}_A}(V_\beta,\mathcal{L}_{\in,V_\beta}^\Diamond)=\Val_{\mathcal{V}_A}(V_\beta,\mathcal{L}_\in),$$
realizing the lower bound in a strong way. Furthermore, theorem~\ref{Theorem.Long-ratchet=S4.3} also shows that every world of $\mathcal{V}_A$ will have its validities contained in S5, establishing the upper bound.

Consider now the case where $A$ is definable, but not necessarily absolute to all these $V_\beta$. In this case, the statements $r_\alpha$ of the previous paragraph might not be a ratchet, since perhaps some $V_\beta$ thinks wrongly that there are $\alpha$ many elements of $A$, but a larger $V_\delta$ recognizes that many of those ordinals are not actually in $A$. Nevertheless, we can fix this problem simply by making our ratchet assertions in the potentialist modal language $\mathcal{L}_\in^\Diamond$, rather than in the language of set theory. Specifically, for our ratchet statements, we use instead the assertion $\bar r_\alpha$ that asserts that there are $\alpha$ many ordinals $\xi$ for which $(\xi\in A)^\Diamond$, using the potentialist translation of the definition of $A$ in $V$. By theorem~\ref{Theorem.Potentialist-translation}, the assertion $(\xi\in A)^\Diamond$ in some $V_\beta$ is equivalent to $\xi\in A$ in $V$, and so $\bar r_\alpha$ is true in some $V_\beta$ just in case there are at least $\alpha$ many elements of $A$ below $\beta$. (This is a sneaky trick, and the cost is that our analysis on the bounds will apply only for $\mathcal{L}^\Diamond$-substitution instances; it could be that additional validities hold when one considers only substitution instances in the language of set theory $\mathcal{L}_\in$ alone.) These revised statements in effect are able to refer via the potentialist translation to the actual class $A$, and they form a long ratchet for the same reasons that $r_\alpha$ did in the previous paragraph. Thus, in any case, the modal validities of the smallest world of $\mathcal{V}_A$ is exactly S4.3 for assertions in the potentialist language, again realizing the lower bound, and every world has its validities for that language contained in S5, establishing the upper bound.
\end{proof}

Let us turn now to the question of whether the upper bounds are realized, for which we shall undertake a similar analysis as in the case of set-theoretic rank potentialism. For any class $A$, a cardinal $\delta$ is {\df $\Sigma_3(A)$-correct}, if $\<V_\delta,\in,A\intersect V_\delta>\elesub_{\Sigma_3}\<V,\in,A>$. The proofs of theorems~\ref{Theorem.rank-potentialism-S5-language-of-set-theory} and~\ref{Theorem.rank-potentialist-S5-potentialist-language} can be adapted to establish the following.

\begin{theorem} Suppose that $A$ is a definable class of ordinals, which is absolute to $V_\delta$ for any $\delta\in A$. Then the following are equivalent for the potentialist semantics of $\mathcal{V}_A=\set{V_\beta\mid \beta\in A}$.
  \begin{enumerate}
    \item $V_\delta$ satisfies the potentialist maximality principle for assertions in the language of set theory with parameters in $V_\delta$.
    \item $\delta$ is $\Sigma_3(A)$-correct.
  \end{enumerate}
\end{theorem}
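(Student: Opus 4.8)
The plan is to imitate the proofs of Theorems~\ref{Theorem.rank-potentialism-S5-language-of-set-theory} and~\ref{Theorem.rank-potentialist-S5-potentialist-language}, but to carry the class predicate $A$ throughout and to measure all complexity in the expanded structure $\<V,\in,A>$. Throughout I take $\delta\in A$, so that $V_\delta$ is genuinely a world of $\mathcal{V}_A$, and I use that the potentialist modalities of $\mathcal{V}_A$ are relativized to $A$: at $V_\delta$ we have $\possible\varphi$ just in case $V_\lambda\satisfies\varphi$ for some $\lambda\in A$ with $\lambda\geq\delta$, and dually for $\necessary$.

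For $(2\implies1)$, since $\mathcal{V}_A$ is linearly ordered, S4.3 is already valid at $V_\delta$ by Theorem~\ref{Theorem.lower-bounds}, so it suffices to verify each instance of axiom~5. Suppose $V_\delta\satisfies\possible\necessary\varphi(\vec a)$ for $\varphi\in\mathcal{L}_\in$ and $\vec a\in V_\delta$; by the $\mathcal{V}_A$-semantics this says there is $\lambda\in A$, $\lambda\geq\delta$, with $V_\theta\satisfies\varphi(\vec a)$ for every $\theta\in A$ above $\lambda$. The crucial observation, exactly as in Theorem~\ref{Theorem.rank-potentialism-S5-language-of-set-theory}, is that ``$V_\theta\satisfies\varphi(\vec a)$'' is $\Pi_2$ in the language of set theory \emph{uniformly in $\theta$}: its complexity does not grow with that of $\varphi$, since once the rank-initial segment $V_\theta$ has been formed, Tarskian satisfaction in it is absolute. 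Hence the assertion
$$\exists\lambda\,\bigl(\lambda\in A\wedge\forall\theta\,((\theta\in A\wedge\theta\geq\lambda)\implies V_\theta\satisfies\varphi(\vec a))\bigr)$$
is $\Sigma_3$ over $\<V,\in,A>$ with parameters $\vec a$, because ``$\theta\in A$'' is atomic there. Being true in $\<V,\in,A>$, it descends by $\Sigma_3(A)$-correctness to $\<V_\delta,\in,A\intersect V_\delta>$, yielding some $\lambda\in A\intersect\delta$ with $V_\theta\satisfies\varphi(\vec a)$ for every $\theta\in A\intersect[\lambda,\delta)$. It then remains to pass from ``$\varphi$ holds at a tail of $A$-levels below $\delta$'' to ``$V_\delta\satisfies\varphi(\vec a)$'', for which one invokes \Levy--Montague reflection for the expanded structure $\<V_\delta,\in,A\intersect V_\delta>$ to produce a level $\theta\in A$, $\lambda\leq\theta<\delta$, with $V_\theta\elesub_\varphi V_\delta$, so that $V_\theta\satisfies\varphi(\vec a)$ forces $V_\delta\satisfies\varphi(\vec a)$.

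For $(1\implies2)$, assuming $\Val_{\mathcal{V}_A}(V_\delta,\mathcal{L}_{\in,V_\delta})=S5$, I would first extract from the maximality principle, exactly as in the unrelativized case, that $\delta$ is a $\beth$-fixed point and a limit of $A$-levels, so that $A\intersect V_\delta$ faithfully records the $A$-worlds below $\delta$. I would then establish $\Sigma_3(A)$-correctness one level at a time, checking downward preservation $\<V,\in,A>\satisfies\sigma(\vec a)\implies\<V_\delta,\in,A\intersect V_\delta>\satisfies\sigma(\vec a)$ for $\Sigma_3(A)$-assertions $\sigma$, which suffices for full $\Sigma_3$-elementarity. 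The engine is the relativized normal form: every $\Sigma_3(A)$-assertion is equivalent over $\<V,\in,A>$ to one of shape $\exists x\,\forall\beta\,\<V_\beta,\in,A\intersect V_\beta>\satisfies\psi(x,\vec a)$, and by the absoluteness of $A$ the inner clause is an ordinary set-theoretic assertion $\rho(\vec a)$. Since a witness $x$ found in $V$ already lies in some sufficiently tall $V_\lambda$ with $\lambda\in A$, we obtain $V_\delta\satisfies\possible\necessary\rho(\vec a)$, whence S5 delivers $V_\delta\satisfies\rho(\vec a)$, which unwinds to $\<V_\delta,\in,A\intersect V_\delta>\satisfies\sigma(\vec a)$.

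The complexity bookkeeping and the normal-form manipulations transfer mechanically, so I expect the genuinely delicate point, and the place where the absoluteness hypothesis on $A$ is indispensable, to be the final reflection step of $(2\implies1)$: after descending the $\Sigma_3(A)$-assertion one knows only that $\varphi$ holds at the $A$-indexed levels below $\delta$, and these need be neither closed nor stationary in $\delta$, so an arbitrary \Levy--Montague reflection point need not lie in $A$. The hypothesis that $A$ is absolute to each $V_\theta$ with $\theta\in A$ is precisely what lets one run the reflection argument inside the expanded structure $\<V_\delta,\in,A\intersect V_\delta>$ and land at a level genuinely in $A$, so that the tail information is actually applicable. Confirming that such reflecting levels can be located within $A$ is the crux; once it is in hand, the rest follows the two theorems being adapted.
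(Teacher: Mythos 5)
Your overall plan---relativize the proofs of theorems~\ref{Theorem.rank-potentialism-S5-language-of-set-theory} and~\ref{Theorem.rank-potentialist-S5-potentialist-language} to the expanded structure $\<V,\in,A>$---is exactly the adaptation the paper intends, and your $(1\implies 2)$ sketch follows that template correctly. But there is a genuine gap in your $(2\implies 1)$ direction, precisely at the step you yourself flag as ``the crux.'' After pulling the $\Sigma_3(A)$ assertion down to $\<V_\delta,\in,A\intersect V_\delta>$, you try to convert ``$\varphi$ holds at a tail of $A$-levels below $\delta$'' into ``$V_\delta\satisfies\varphi(\vec a)$'' by locating a \Levy--Montague reflecting level inside $A\intersect[\lambda,\delta)$. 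That step cannot be completed as stated: the levels reflecting $\varphi$ form a club in $\delta$, but $A\intersect\delta$ is merely unbounded, and a club can be disjoint from an unbounded set (even at countable cofinality); the absoluteness hypothesis on $A$ does nothing to push a reflection point into $A$. For a $\varphi$ of high quantifier complexity there is simply no reason any level of $A$ below $\delta$ should agree with $V_\delta$ about $\varphi(\vec a)$, so this route is a dead end rather than a missing lemma.

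The intended finish requires no reflection at all and is spelled out in the paper's proof of theorem~\ref{Theorem.GZ-S5-LST}: having obtained a specific witness $\lambda\in A\intersect\delta$ for which $\<V_\delta,\in,A\intersect V_\delta>$ satisfies ``every $\theta\in A$ with $\theta\geq\lambda$ has $V_\theta\satisfies\varphi(\vec a)$,'' observe that this statement about the parameters $\lambda$ and $\vec a$ is $\Pi_2$ in the language with the predicate for $A$. Since $\Sigma_3(A)$-correctness gives two-way agreement on $\Pi_2(A)$ assertions, the same $\lambda$ works in $\<V,\in,A>$; now instantiate the universal quantifier at $\theta=\delta$ itself, which is legitimate because $\delta\in A$ and $\delta>\lambda$, yielding $V_\delta\satisfies\varphi(\vec a)$. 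So the hypothesis doing the work at this point is $\delta\in A$ (that $V_\delta$ really is a world of $\mathcal{V}_A$), not the location of reflection points below $\delta$; the absoluteness of $A$ is needed earlier, to ensure that the worlds $V_\theta$ compute membership in $A$ correctly so that the potentialist modalities of $\mathcal{V}_A$ really are captured by the displayed $\Sigma_3(A)$ assertion.
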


\begin{theorem}\label{Theorem.Definable-class-A-maximality-principle} Suppose that $A$ is a definable class of ordinals. Then the following are equivalent for the potentialist semantics of $\mathcal{V}_A=\set{V_\beta\mid \beta\in A}$.
  \begin{enumerate}
    \item $V_\delta$ satisfies the potentialist maximality principle for assertions in the potentialist language of set theory $\mathcal{L}_\in^\Diamond$ with parameters in $V_\delta$.
    \item $\delta$ is a correct cardinal.
  \end{enumerate}
\end{theorem}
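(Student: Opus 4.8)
The plan is to adapt the proof of Theorem~\ref{Theorem.rank-potentialist-S5-potentialist-language} almost verbatim, the essential new point being that the possibly non-absolute class $A$ causes no trouble once one works in the full potentialist language $\mathcal{L}_\in^\Diamond$ and insists on full correctness. Throughout I consider only worlds $V_\delta$ of $\mathcal{V}_A$, that is, ordinals $\delta\in A$, so that statement~(1) is meaningful. Two facts are used at the outset. First, since $A$ is a definable proper class of ordinals, it is unbounded, and hence $\mathcal{V}_A$ converges to the full universe $V$: every $V_\beta$ with $\beta\in A$ is a substructure of $V$, and every $a\in V$ lies in some such $V_\beta$. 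Thus Theorem~\ref{Theorem.Potentialist-translation} applies, giving $V\satisfies\psi(\vec a)\Iff V_\beta\satisfies_{\mathcal{V}_A}\psi^\Diamond(\vec a)$ for any world $V_\beta$ containing $\vec a$. Second, the worlds of $\mathcal{V}_A$ are linearly ordered by $\in$, so by Theorem~\ref{Theorem.lower-bounds} the theory S4.3 is valid at every world; the substance in either direction therefore concerns only axiom~5. I would also stress the contrast with the immediately preceding $\Sigma_3(A)$-correctness theorem and with Theorem~\ref{Theorem.Definable-class-A-rank-potentialism}: there absoluteness of $A$ was hypothesized and the characterization was $A$-relative, whereas here the answer is plain correctness precisely because the full language lets us reroute every reference to $A$ and to satisfaction-in-worlds through genuine truth in $V$.

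For $(2\implies1)$, assume $\delta$ is correct, so $V_\delta\prec V$ as a scheme, and suppose $V_\delta\satisfies_{\mathcal{V}_A}\possible\necessary\varphi(\vec a)$ for some $\varphi\in\mathcal{L}_\in^\Diamond$ and $\vec a\in V_\delta$. For a fixed $\varphi$, the relation ``$V_\theta\satisfies_{\mathcal{V}_A}\varphi(\vec a)$'' is definable in the language of set theory (using the definition of $A$ to interpret the modalities), and so the displayed modal truth is equivalent to the assertion $\exists\lambda\in A\,\forall\theta\in A\,(\theta\geq\lambda\to V_\theta\satisfies_{\mathcal{V}_A}\varphi(\vec a))$, which makes no reference to $\delta$ and holds in $V$. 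By $V_\delta\prec V$ this same assertion holds in $V_\delta$, furnishing a threshold $\lambda_0<\delta$ with $\lambda_0\in A^{V_\delta}$. Here is the crucial use of full correctness: because $V_\delta\prec V$ agrees with $V$ on the defining formula of $A$, we have $A^{V_\delta}=A\intersect\delta$, and likewise $V_\delta$ and $V$ agree on whether a given $V_\theta$ satisfies the $\mathcal{V}_A$-assertion $\varphi$; thus the threshold $\lambda_0$ works in $V$ as well. Since $\delta\in A$ and $\delta\geq\lambda_0$, taking $\theta=\delta$ yields $V_\delta\satisfies_{\mathcal{V}_A}\varphi(\vec a)$, verifying this instance of axiom~5.

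For $(1\implies2)$, assume S5 holds at $V_\delta$ for $\mathcal{L}_\in^\Diamond$-assertions with parameters in $V_\delta$, and prove by induction on $n$ that $\delta$ is $\Sigma_n$-correct, so that altogether $V_\delta\prec V$. The case $n=0$ is automatic by transitivity, and I would run a uniform inductive step from $\Sigma_n$-correctness to $\Sigma_{n+1}$-correctness. Given a true $\Sigma_{n+1}$ fact $V\satisfies\exists x\,\psi(x,\vec a)$ with $\psi\in\Pi_n$ and $\vec a\in V_\delta$, Theorem~\ref{Theorem.Potentialist-translation} converts $\psi$ into its translation: $V\satisfies\psi(x,\vec a)$ iff $V_\beta\satisfies_{\mathcal{V}_A}\psi^\Diamond(x,\vec a)$ at any world containing $x,\vec a$. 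Moving to a world $V_\lambda$ large enough to contain the $V$-witness $x$, the translation holds there and at every larger world, so $V_\lambda\satisfies_{\mathcal{V}_A}\necessary\exists x\,\psi^\Diamond(x,\vec a)$ and hence $V_\delta\satisfies_{\mathcal{V}_A}\possible\necessary\exists x\,\psi^\Diamond(x,\vec a)$. Axiom~5 then delivers $x\in V_\delta$ with $V_\delta\satisfies_{\mathcal{V}_A}\psi^\Diamond(x,\vec a)$, which by Theorem~\ref{Theorem.Potentialist-translation} applied at $V_\delta$ means $V\satisfies\psi(x,\vec a)$; the inductive hypothesis ($\Pi_n$-correctness) reflects this down so that $V_\delta\satisfies\exists x\,\psi(x,\vec a)$. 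This establishes $\Sigma_{n+1}$-correctness.

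The main obstacle, and the place I would take most care, is exactly the interaction with the non-absoluteness of $A$. In the $(2\implies1)$ direction this is handled by observing that $V_\delta\prec V$ is strong enough to force $A^{V_\delta}=A\intersect\delta$ and agreement of the two satisfaction relations, while in the $(1\implies2)$ direction it never intrudes, because every reference is rerouted through truth in the limit structure $V$ by the potentialist translation. This is precisely why neither an absoluteness hypothesis nor an $A$-relativized notion of correctness is needed, in contrast to the two companion theorems. Finally, as with Theorem~\ref{Theorem.rank-potentialist-S5-potentialist-language}, both statements are genuine schemes, so the result should be read and proved as a pair of theorem schemes, one deriving each instance of statement~(2) from the theory expressed by statement~(1) and conversely.
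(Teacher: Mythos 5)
Your proposal is correct and is precisely the adaptation of the proof of theorem~\ref{Theorem.rank-potentialist-S5-potentialist-language} that the paper itself prescribes (the paper gives no separate proof, only the instruction to adapt, plus the remark that no predicate for $A$ is needed because correctness yields its absoluteness to $V_\delta$ --- which is exactly how you handle it). Your explicit treatment of the non-absoluteness of $A$, the restriction to worlds $\delta\in A$, and the reading of both statements as schemes all match the paper's intent.
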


Thus, the upper bounds of theorem~\ref{Theorem.Definable-class-A-rank-potentialism} are sharp. Notice that we do not need the predicate for $A$ in theorem~\ref{Theorem.Definable-class-A-maximality-principle}, since it is definable and we get the absoluteness of $A$ to $V_\delta$ from $\Sigma_n$-correctness, once $n$ is large enough.

Let us briefly generalize the analysis to the case of an arbtitrary class $A\of\Ord$, not necessarily definable, in \Godel-Bernays set theory \GBC. Consider the potentialist system $\mathcal{W}_A=\set{\<V_\beta,\in,A\intersect\beta>\mid\beta\in A}$ in the language of set theory with a predicate for $A$, a language we denote by $\mathcal{L}_\in(A)$.

\begin{theorem}
 Every world $W$ in $\mathcal{W}_A$ obeys
        $$S4.3\ \of\ \Val(W,\mathcal{L}^\Diamond_{\in}(A)_W)\ \of\ \Val(W,\mathcal{L}_\in(A))\ \of\  S5.$$
 The lower bound is sharp, in that there are some worlds $W$ with
        $$S4.3\ =\ \Val(W,\mathcal{L}^\Diamond_\in(A)_W)\ =\ \Val(W,\mathcal{L}_\in(A)).$$
\end{theorem}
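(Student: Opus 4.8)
The plan is to establish the three inclusions $S4.3\of\Val(W,\mathcal{L}^\Diamond_{\in}(A)_W)\of\Val(W,\mathcal{L}_\in(A))\of S5$ in turn, in each case reducing to a result already in hand, the genuinely new feature being only that the predicate for $A$ now lets us name $A$ directly inside every world. For the first inclusion, observe that the worlds $\<V_\beta,\in,A\intersect\beta>$ for $\beta\in A$ are linearly ordered by the ordinal ordering on $\beta$, each a substructure of those above it; hence by statement~(4) of theorem~\ref{Theorem.lower-bounds} every S4.3 assertion is valid at every world for all of $\mathcal{L}^\Diamond_{\in}(A)$ with parameters. The central inclusion is an instance of the general monotonicity principle that validity with respect to a more expressive language is a more severe requirement, applied to $\mathcal{L}_\in(A)\of\mathcal{L}^\Diamond_{\in}(A)_W$.

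For the upper bound $\Val(W,\mathcal{L}_\in(A))\of S5$ at every world, I would mirror the definable-class argument of theorem~\ref{Theorem.Definable-class-A-rank-potentialism}, exhibiting a uniform long ratchet and invoking theorem~\ref{Theorem.Long-ratchet=S4.3}. Let $r_\alpha$ be the $\mathcal{L}_\in(A)$-assertion ``there are at least $\alpha$ ordinals in $A$,'' which in the world $\<V_\beta,\in,A\intersect\beta>$ says that $A\intersect\beta$ has order type at least $\alpha$. The essential point---and the place where having $A$ as a genuine predicate rather than merely a definition does the real work---is that every world knows its own $A\intersect\beta$ exactly, so no world can be mistaken about which ordinals below its height lie in $A$. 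Consequently the $r_\alpha$ require none of the potentialist-translation correction that was forced upon the non-absolute case of theorem~\ref{Theorem.Definable-class-A-rank-potentialism}; they are outright $\mathcal{L}_\in(A)$-statements. One then checks the long-ratchet clauses: each $r_\alpha$ is an unpushed button (once $A\intersect\beta$ has order type $\geq\alpha$ it keeps it in every larger world, since the predicate only grows), $r_\alpha$ necessarily implies every earlier $r_\beta$, no single world realizes all of them (the order type of $A\intersect\beta$ is a set ordinal, at most $\beta$, so $r_\alpha$ fails for large $\alpha$), and from any world in which $r_\alpha$ is not yet true one can pass to the world $\<V_{a_\alpha},\in,A\intersect a_\alpha>$, where $a_\alpha\in A$ is the element below which $A$ has order type exactly $\alpha$, so that $r_\alpha$ holds there but $r_{\alpha+1}$ does not. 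This is a uniform long ratchet with $\eta(\alpha)=r_\alpha$, so theorem~\ref{Theorem.Long-ratchet=S4.3} (through theorems~\ref{Theorem.Switches-S5} and~\ref{Theorem.Switches-iff-dials} applied to the finite ratchets and dials it manufactures) yields that the validities of every world are contained in S5.

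Finally, for sharpness of the lower bound, the same theorem~\ref{Theorem.Long-ratchet=S4.3} gives $\Val(W,\mathcal{L}_\in(A))\of S4.3$ at a world where the long ratchet has not yet begun to crank, namely (the least suitable) $\<V_\beta,\in,A\intersect\beta>$ of simulated ratchet volume $0$; chaining this against the first inclusion through the central inclusion collapses the whole chain to $S4.3$, giving the asserted equalities. Since the extracted finite ratchet and dial values are definable from the $A$-predicate without parameters---the ratchet volume in a world being the definable order type of its copy of $A$---these upper bounds already hold for $\mathcal{L}_\in(A)$-substitutions, and the equality propagates to $\mathcal{L}^\Diamond_{\in}(A)_W$ via the central inclusion. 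The one thing to verify with care, and the only real obstacle, is that $\mathcal{W}_A$ meets the standing hypotheses of the long-ratchet machinery: coherence with limit $\<V,\in,A>$ is immediate since $A$ is unbounded, so every set lies in some $V_\beta$ with $\beta\in A$; and the simulation's demand that the models carry out simple ordinal arithmetic is harmless because the cranking and pattern-realization take place in the tail of $A$, among worlds $V_\beta$ of arbitrarily large height, so one simply takes as the lower-bound world the least element of $A$ from which this tail is reached (restricting, if one likes, to the elements of $A$ above $\omega^2$).
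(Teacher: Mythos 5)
Your proposal is correct and follows essentially the same route as the paper, which proves this result precisely by adapting theorem~\ref{Theorem.Definable-class-A-rank-potentialism}: adding $A$ as a predicate makes it automatically absolute between the worlds, so the ratchet assertions $r_\alpha$ (``there are at least $\alpha$ ordinals in $A$'') work outright in $\mathcal{L}_\in(A)$ without the potentialist-translation trick, yielding the stronger (absolute-case) conclusions in that language. Your write-up merely fills in the details that the paper leaves implicit.
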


\begin{proof}
 This is simply an adaptation of theorem~\ref{Theorem.Definable-class-A-rank-potentialism}. Since we have added $A$ explicitly to the structures, it is in effect absolute between these structures, and so we get the stronger conclusions of that theorem, but only in the language in which $A$ is a predicate.
\end{proof}

Some set-theorists may prefer to use the sets $H_\kappa$, the collection of sets of hereditary size less than $\kappa$, for a regular cardinal $\kappa$, in place of the rank-initial segments $V_\beta$, and the analysis in this case is basically the same.

\begin{theorem}
  The previous theorems also hold if one uses the sets $H_\kappa$ for regular cardinals $\kappa$, in place of the rank-initial segments $V_\beta$. In particular, the potentialist account consisting of all $H_\kappa$, or of $H_\kappa$ for regular cardinals from some definable proper class $A$, is in each case precisely S4.3.
  \begin{enumerate}
    \item S4.3 is valid for $H_\kappa$-potentialism in every world $H_\kappa$.
    \item Some worlds $H_\kappa$ validate only S4.3.
    \item Every world $H_\kappa$ has its validities contained within S5.
  \end{enumerate}
\end{theorem}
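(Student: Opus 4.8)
The plan is to run the rank-potentialism analysis of \S\ref{Subsection.Rank-potentialism} essentially verbatim, but with $H_\kappa$ in place of $V_\beta$ and a ratchet that counts \emph{infinite cardinals} rather than ordinals. First I would dispatch the claim that S4.3 is valid in every world: for regular cardinals $\kappa\leq\lambda$ one has $H_\kappa\of H_\lambda$, these sets are transitive and nested, and $H_\kappa$ accesses $H_\lambda$ precisely when $\kappa\leq\lambda$, so the accessibility relation of the system is linear. Theorem~\ref{Theorem.lower-bounds}(4) then gives that every assertion of S4.3 is valid at each $H_\kappa$, for all the relevant languages and with parameters, which is the first claim and the leftmost inclusion.

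For the containment of every world's validities in S5 and the sharpness of the S4.3 lower bound, the plan is to produce a uniform long ratchet and apply theorem~\ref{Theorem.Long-ratchet=S4.3}; its hypotheses hold because the limit model is the full universe $V$, which is transitive, carries out ordinal arithmetic, and has height $\Ord\geq\omega^2$. I would take $r_\alpha$ to be the assertion ``$\aleph_\alpha$ exists,'' a uniform family via the formula $\eta(x)$ saying that the $x$-th infinite cardinal exists, with $\alpha$ as parameter. Each $r_\alpha$ is a pure, initially unpushed button: a cardinal below $\kappa$ is computed correctly in $H_\kappa$ and remains a cardinal in every larger world, so $\aleph_\alpha$ never disappears once it appears; moreover $r_\alpha$ necessarily implies $r_\beta$ for $\beta<\alpha$ since $\aleph_\beta<\aleph_\alpha$, and no world can satisfy all the $r_\alpha$, as $H_\kappa$ contains only the cardinals below $\kappa$. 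Theorem~\ref{Theorem.Long-ratchet=S4.3} then delivers both remaining claims at once: the validities of every world lie in S5, and the small worlds (such as the bottom world $H_\omega=V_\omega$, in which no $\aleph_\alpha$ yet exists) validate only S4.3, which together with the first claim pins them to exactly S4.3.

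The one genuinely new point---and the step I expect to be the main obstacle---is the downward clause of the long ratchet: from a world in which $r_\alpha$ is not yet true I must reach an accessible world in which $r_\alpha$ holds while $r_{\alpha+1}$ fails. In the $V_\beta$ setting this was trivial, since one can halt the height at any ordinal, e.g.\ at $\omega\cdot k+1$; but here the only available heights are regular cardinals, so the dial used in the proof of theorem~\ref{Theorem.rank-potentialism-bounds}, based on the finite remainder of the height, is simply unavailable. The resolution is that $\aleph_{\alpha+1}$ is a successor cardinal and therefore regular, so $H_{\aleph_{\alpha+1}}$ is itself a world of the system, and in it $\aleph_\alpha$ exists while $\aleph_{\alpha+1}\notin H_{\aleph_{\alpha+1}}$ does not---exactly the configuration required. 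The finite ratchet-and-dial is then extracted from an $\omega$-block of the $\aleph$-index as in the long-ratchet simulation: the world $H_{\aleph_{\omega\cdot k+j+1}}$ has simulated ratchet volume $k$ and dial value $j\bmod m$, and one resets the dial freely by advancing $j$ inside its block without raising $k$.

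Finally, I would note that the parenthetical generalizations follow by the same adaptations already used for $V_\beta$. Restricting to regular $\kappa$ from a definable proper class $A$ is handled exactly as in theorem~\ref{Theorem.Definable-class-A-rank-potentialism}: when the defining property is absolute to the relevant $H_\kappa$ the ratchet stays in $\mathcal{L}_\in$, and otherwise one uses the potentialist translation of the definition of $A$, at the cost of restricting the conclusion to $\mathcal{L}_\in^\Diamond$-substitutions. The maximality-principle characterizations of theorems~\ref{Theorem.rank-potentialism-S5-language-of-set-theory} and~\ref{Theorem.rank-potentialist-S5-potentialist-language} carry over as well---now identifying the worlds $H_\delta$ at which S5 holds with appropriate correctness of $\<H_\delta,\in>$ in $V$---because those arguments relied only on the linearity of the system and on the potentialist modality being expressible in $\mathcal{L}_\in$, both of which persist for $H_\kappa$-potentialism.
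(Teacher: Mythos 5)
Your proposal is correct and follows essentially the same route as the paper: linearity of the $H_\kappa$ gives S4.3 as a lower bound via theorem~\ref{Theorem.lower-bounds}, and a uniform long ratchet together with theorem~\ref{Theorem.Long-ratchet=S4.3} gives both the S5 containment and the sharpness of S4.3 at the bottom worlds. The only difference is the choice of ratchet---the paper counts the regular cardinals $\kappa$ for which $H_\kappa$ is an allowed world (via the potentialist translation, so as to cover the class-$A$ case uniformly), whereas you use ``$\aleph_\alpha$ exists,'' which works equally well for the unrestricted system and whose downward clause you correctly secure by passing to $H_{\aleph_{\alpha+1}}$, deferring to the theorem~\ref{Theorem.Definable-class-A-rank-potentialism} machinery for the restricted case exactly as the paper does.
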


\begin{proof}
  The proof is essentially identical to the proofs of theorems~\ref{Theorem.rank-potentialism-bounds} and~\ref{Theorem.Rank-potentialism-lower-bound}. Namely, the S4.3 assertions are valid since the $H_\kappa$ are linearly ordered. Conversely, one can define ratchets for the $H_\kappa$ by letting $r_\alpha$ assert that there are $\alpha$ many regular cardinals $\kappa$ for which $H_\kappa$ is allowed as a universe fragment, using the potentialist translation as in the proof of theorem~\ref{Theorem.Definable-class-A-rank-potentialism}. So there are worlds $H_\kappa$ in which only S4.3 is valid, and meanwhile, every world has its validities within S5.
\end{proof}

Since $V_\delta=H_\delta$ for any $\beth$-fixed point $\delta$, which happens on a closed unbounded class of cardinals, the differences between $V_\delta$-potentialism and $H_\delta$-potentialism tend to evaporate once one has the idea to restrict the class of $\delta$.

\subsection{Grothendieck universe potentialism}

In current mathematical practice, the potentialist perspective is well illustrated by the category-theoretic usage of Grothendieck universes, or Grothendieck-Zermelo universes, as we shall call them. These are the rank-initial segments of the cumulative hierarchy $V_\kappa$, for an inaccessible cardinal $\kappa$. Zermelo introduced and studied these universes in 1930 (\cite{Zermelo:1930}), proving that they are exactly the models of second-order set theory $\ZFC_2$, and they have been studied by set theorists continuously since that time---they form the beginnings of the intensely studied large cardinal hierarchy. For example, the consistency strength of a single Mahlo cardinal is strictly stronger than \ZFC\ with a proper class of inaccessible cardinals. Grothendieck rediscovered these universes in the 1960s, also considering the empty universe $\emptyset$ and $V_\omega$ as instances, and used them to serve as a suitable universe concept in category theory.

In the category-theoretic practice, mathematical claims are made relative to a given universe, rather than to the entire set-theoretic universe, but one feels free at any time to move to a larger universe. This practice therefore illustrates almost perfectly the potentialist idea, and accords very well also with how Zermelo himself perceived his universes. With the second-order system $\ZFC_2$ in mind, Zermelo wrote:

\begin{quote}
%    But [the set-theoretic paradoxes] are only apparent `contradictions', and depend solely on confusing set theory itself , which is not categorically determined by its axioms, with individual models representing it.
What appears as an `ultrafinite non- or super-set' in one model is, in the succeeding model, a perfectly good, valid set with both a cardinal number and an ordinal type, and is itself a foundation stone for the construction of a new domain.~\cite{Zermelo:1930}
\end{quote}
That is, what is a proper class from the point of view of one model of $\ZFC_2$ is merely a set from the point of view of some extended model. Zermelo is thus explicitly viewing these $V_\kappa$ as set-theoretic worlds.

So let us refer to {\df Grothendieck-Zermelo (GZ) potentialism} as concerned with the potentialist system
    $$\mathcal{Z}=\set{V_\kappa\mid\kappa\text{ inaccessible}}.$$
Meanwhile, the {\df Grothendieck universe axiom} is the assertion that every set is an element of a Grothendieck-Zermelo universe, or equivalently, that the inaccessible cardinals are unbounded in the ordinals.

\begin{theorem}
 Assume the Grothendieck universe axiom holds. Then Grothendieck-Zermelo potentialism provides a potentialist account of the set-theoretic universe $V$, and every Grothendieck-Zermelo universe $W$ obeys
        $$S4.3 \ \of\ \Val_\mathcal{Z}(W,\mathcal{L}^\Diamond_{\in,W})  \ \of \ \Val_\mathcal{Z}(W,\mathcal{L}_\in) \ \of \ S5.$$
 The lower bound is sharp, in that some Grothendieck-Zermelo universes $W$ obey
        $$S4.3 \ = \Val_\mathcal{Z}(W,\mathcal{L}^\Diamond_{\in,W})  \ =\ \Val_\mathcal{Z}(W,\mathcal{L}_\in).$$
\end{theorem}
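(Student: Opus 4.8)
The plan is to recognize Grothendieck--Zermelo potentialism as an instance of the relativized rank potentialism of Theorem~\ref{Theorem.Definable-class-A-rank-potentialism}, taking $A$ to be the class of inaccessible cardinals, so that $\mathcal{Z}=\mathcal{V}_A$. First I would dispatch the potentialist-account claim: the Grothendieck universe axiom says precisely that the inaccessible cardinals are unbounded in $\Ord$, so for any world $V_\kappa$ and any set $a\in V$ there is an inaccessible $\lambda>\kappa$ with $a\in V_\lambda$; since $V_\kappa$ accesses $V_\lambda$ in $\mathcal{Z}$, every world can be extended to accommodate any individual of $V$, and $V=\bigcup_\kappa V_\kappa$ is the limit. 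Thus $\mathcal{Z}$ provides a potentialist account of $V$, and in particular it is a coherent collection of transitive models of $\ZFC$, so the hypotheses of the ratchet machinery (limit height well beyond $\omega^2$, models doing ordinal arithmetic) are met.

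The two easy inclusions come for free. The worlds $V_\kappa$ are linearly ordered by the underlying inaccessible cardinals, so by Theorem~\ref{Theorem.lower-bounds}(4) every assertion of S4.3 is valid at each world with respect to any of the languages, giving $S4.3\of\Val_\mathcal{Z}(W,\mathcal{L}_{\in,W}^\Diamond)$. The central inclusion $\Val_\mathcal{Z}(W,\mathcal{L}_{\in,W}^\Diamond)\of\Val_\mathcal{Z}(W,\mathcal{L}_\in)$ is simply the monotonicity of validity in the expressiveness of the substitution language, since $\mathcal{L}_\in\subseteq\mathcal{L}_{\in,W}^\Diamond$.

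For the upper bound and the sharpness of the lower bound simultaneously, I would exhibit a uniform long ratchet and invoke Theorem~\ref{Theorem.Long-ratchet=S4.3}, which yields both that the validities of every world are contained in S5 and that at worlds where the ratchet has not yet cranked the validities are contained in S4.3. Take $r_\alpha$ to be the assertion ``there are at least $\alpha$ inaccessible cardinals,'' a uniform family since $r_\alpha=\eta(\alpha)$ for $\eta(x)=$``there are at least $x$ many inaccessibles.'' I would check the ratchet conditions against the enumeration $\langle\kappa_\xi\mid\xi\in\Ord\rangle$ of the inaccessibles: at $V_{\kappa_\xi}$ the count of inaccessibles is $\xi$, so exactly the $r_\alpha$ with $\alpha\leq\xi$ hold; hence each $r_\alpha$ necessarily implies the earlier ones, no world satisfies all of them, and from any world in which $r_\alpha$ fails (so its index is below $\alpha$) one moves up to $V_{\kappa_\alpha}$, where $\alpha$ exists, $r_\alpha$ holds and $r_{\alpha+1}$ fails. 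At the least inaccessible world $V_{\kappa_0}$ all the $r_\alpha$ for $\alpha\geq 1$ are unpushed buttons, so the ratchet starts there; combining Theorem~\ref{Theorem.Long-ratchet=S4.3} with the lower bound then gives $S4.3=\Val_\mathcal{Z}(W,\mathcal{L}_{\in,W}^\Diamond)=\Val_\mathcal{Z}(W,\mathcal{L}_\in)$ at $W=V_{\kappa_0}$, together with S5-containment at every world.

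The one point requiring genuine care---the main obstacle---is verifying that $r_\alpha$ is expressible in the plain language $\mathcal{L}_\in$ and counts the \emph{same} inaccessibles in every world, i.e.\ that the class $A$ of inaccessibles is absolute to $V_\kappa$ for each inaccessible $\kappa$, which is exactly the absoluteness hypothesis of Theorem~\ref{Theorem.Definable-class-A-rank-potentialism}. Here I would observe that for $\gamma<\kappa$ the model $V_\kappa=H_\kappa$ contains $\mathcal{P}(\delta)$ and all functions $\delta\to\gamma$ for $\delta<\gamma$, so $V_\kappa$ correctly computes regularity and the strong-limit condition, and hence $V_\kappa\satisfies$``$\gamma$ is inaccessible'' exactly when $\gamma$ really is. With this absoluteness the ratchet is a genuine parameter-free $\mathcal{L}_\in$ long ratchet, so the upper bound and the sharpness hold already for $\mathcal{L}_\in$; had absoluteness failed one would instead retreat to the potentialist-translation trick of Theorem~\ref{Theorem.Definable-class-A-rank-potentialism} and settle for $\mathcal{L}_\in^\Diamond$-substitutions.
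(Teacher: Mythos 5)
Your proposal is correct and follows essentially the same route as the paper, which proves this result as an immediate consequence of theorem~\ref{Theorem.Definable-class-A-rank-potentialism} applied to the definable class $A$ of inaccessible cardinals, noting precisely the absoluteness of inaccessibility to $V_\kappa$ that you verify in order to obtain the stronger $\mathcal{L}_\in$ conclusions. You have simply unfolded the details of that cited theorem (the linearity lower bound, the long ratchet ``there are at least $\alpha$ inaccessibles,'' and theorem~\ref{Theorem.Long-ratchet=S4.3}) rather than invoking it wholesale, and all the steps check out.
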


\begin{proof}
  This is an immediate consequence of theorem~\ref{Theorem.Definable-class-A-rank-potentialism}. Note that because the class of inaccessible cardinals is absolute to $V_\kappa$, we obtain the stronger conclusions of that theorem.
\end{proof}

Let us now turn to the sharpness of the upper bound, which is a statement with large cardinal strength. A cardinal $\kappa$ is {\df $\Sigma_n$-reflecting}, if it is inaccessible and $\Sigma_n$-correct.

\begin{theorem}\label{Theorem.GZ-S5-LST}
 For any inaccessible cardinal $\kappa$, the following are equivalent:
 \begin{enumerate}
   \item The Grothendieck universe axiom holds and $V_\kappa$ satisfies the GZ-potentialist maximality principle for assertions in the language of set theory with parameters from $V_\kappa$. That is, $$\Val_\mathcal{Z}(V_\kappa,\mathcal{L}_{\in,V_\kappa})=S5.$$
   \item $\kappa$ is a $\Sigma_3$-reflecting cardinal.
 \end{enumerate}
\end{theorem}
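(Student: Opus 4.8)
The plan is to adapt the proof of Theorem~\ref{Theorem.rank-potentialism-S5-language-of-set-theory} to the restricted tower of inaccessible ranks. Since $\kappa$ is assumed inaccessible throughout the statement, ``$\Sigma_3$-reflecting'' reduces to ``$\Sigma_3$-correct,'' i.e.\ $V_\kappa\prec_{\Sigma_3}V$, and I may freely use the weaker $\Sigma_2$-, $\Pi_2$-, $\Pi_3$-correctness that this entails. The new feature relative to rank potentialism is that every quantifier over worlds must now be relativized to inaccessible ranks.

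For the direction ($2\implies 1$) I would first derive the Grothendieck universe axiom from the assumption that $\kappa$ is inaccessible and $\Sigma_3$-correct. The key enabling observation is that inaccessibility is $\Pi_1$-expressible: regularity is plainly $\Pi_1$, and ``$\mu$ is a strong limit'' can be written as $\forall\alpha<\mu\,\neg\exists g\,(g\colon\mu\hookrightarrow\{x: x\subseteq\alpha\})$, which avoids reference to the completed power set. Consequently, for each $\gamma<\kappa$ the assertion ``there is an inaccessible above $\gamma$'' is $\Sigma_2$ and true in $V$ (witnessed by $\kappa$), so by $\Sigma_2$-correctness it reflects into $V_\kappa$, yielding an inaccessible in $(\gamma,\kappa)$; thus $\kappa$ is a limit of inaccessibles and $V_\kappa$ satisfies ``the inaccessibles are unbounded.'' As this last assertion is $\Pi_3$, it transfers upward from $V_\kappa$ to $V$ by $\Pi_3$-correctness, giving the Grothendieck axiom. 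For the maximality principle, since the inaccessible ranks are linearly ordered, S4.3 is already valid at $V_\kappa$ by Theorem~\ref{Theorem.lower-bounds}, so it suffices to verify axiom~5. Given $V_\kappa\satisfies\possible\necessary\varphi(\vec a)$ in the GZ semantics, $\varphi$ holds at all sufficiently large inaccessible ranks, so the statement $\Psi=\exists\lambda\,(\lambda\text{ inacc}\wedge\forall\theta\ge\lambda\,(\theta\text{ inacc}\to V_\theta\satisfies\varphi(\vec a)))$, which is $\Sigma_3$ by the $\Pi_1$ presentation of inaccessibility, holds in $V$. Reflecting $\Psi$ into $V_\kappa$ produces an inaccessible threshold $\beta<\kappa$ beyond which $\varphi$ holds at every inaccessible rank below $\kappa$; applying $\Pi_2$-correctness to the parametrized statement $\forall\theta\ge\beta\,(\theta\text{ inacc}\to V_\theta\satisfies\varphi(\vec a))$ transfers this threshold upward to $V$. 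The crucial point is that $\kappa$ is itself inaccessible and exceeds $\beta$, so instantiating the now-verified universal statement at $\theta=\kappa$ gives $V_\kappa\satisfies\varphi(\vec a)$, as required.

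For the direction ($1\implies 2$) I would mirror the $(1\implies 2)$ half of Theorem~\ref{Theorem.rank-potentialism-S5-language-of-set-theory}, bootstrapping correctness level by level but reading the modality $\possible$ as ascent through inaccessible ranks. Inaccessibility of $\kappa$ already delivers $V_\kappa=H_\kappa$ and hence $V_\kappa\prec_{\Sigma_1}V$, so the base case is free. For the inductive step, a $\Sigma_3$ truth of $V$ is put in the normal form $\chi=\exists x\,\forall\beta\,V_\beta\satisfies\psi(x,\vec a)$; the Grothendieck axiom (the standing hypothesis of statement~1) guarantees that one can always ascend to a larger inaccessible rank containing any desired witness $x$, whence $V_\kappa\satisfies\possible\necessary\chi$, and the maximality principle yields $V_\kappa\satisfies\chi$, i.e.\ the $\Sigma_3$ assertion reflects to $V_\kappa$. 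This is precisely where the Grothendieck axiom is indispensable: without a proper class of inaccessibles the ascent to find witnesses fails, which is why statement~1 must carry the axiom as an explicit conjunct.

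The main obstacle I anticipate is the complexity bookkeeping forced by the inaccessibility predicate. In rank potentialism the worlds exhaust all of the $V_\beta$ and the relevant ``eventually true'' statement is cleanly $\Sigma_3$; here a careless relativization to inaccessible ranks pushes the complexity of ``the inaccessibles are unbounded'' and of $\Psi$ above $\Sigma_3$, breaking the reflection. Landing exactly at $\Sigma_3$ (and $\Pi_2$ for the parametrized threshold statement) hinges entirely on the $\Pi_1$ presentation of inaccessibility, and I would verify carefully that each reflected statement genuinely occupies the $\Sigma_3/\Pi_3$ range. The second delicate point, already flagged, is that reflecting a universal-over-worlds statement into $V_\kappa$ constrains only inaccessibles strictly below $\kappa$; recovering $\varphi$ at $V_\kappa$ itself relies essentially on $\kappa$ being an inaccessible world lying above the reflected threshold, so that it is captured by the upward-transferred universal statement.
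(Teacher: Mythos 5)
Your proposal is correct and follows essentially the same route as the paper's proof: derive the Grothendieck universe axiom by reflecting ``there is an inaccessible above $\gamma$'' into $V_\kappa$ and transferring the resulting $\Pi_3$ statement back up, verify axiom~5 by reflecting the $\Sigma_3$ assertion $\Psi$ down and the $\Pi_2$ threshold statement up so that it can be instantiated at $\theta=\kappa$, and for the converse bootstrap $\Sigma_n$-correctness level by level using the maximality principle to find witnesses. Your explicit attention to the $\Pi_1$-expressibility of inaccessibility just makes precise the complexity bookkeeping the paper leaves implicit, and the only cosmetic gap is that you do not spell out the intermediate $\Sigma_2$-correctness step, which your level-by-level framing already covers.
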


\begin{proof}
($2\to 1$) Assume that $\kappa$ is a $\Sigma_3$-reflecting cardinal. So $\kappa$ is inaccessible and $V_\kappa$ is a Grothendieck-Zermelo universe. Since $V_\kappa\elesub_{\Sigma_3}V$ and $V$ has an inaccessible cardinal, it follows that $V_\kappa$ must have an inaccessible cardinal. Similarly, for any $\beta<\kappa$ we know that $V$ has an inaccessible cardinal above $\beta$, and so $V_\kappa$ must agree. So the inaccessible cardinals are unbounded in $\kappa$. Thus, $V_\kappa$ satisfies the Grothendieck universe axiom. This axiom is itself a $\Pi_3$ assertion, and so it is true in $V$. So the Grothendieck universe axiom holds. Now suppose that $\possible\necessary\varphi(\vec a)$ holds at $V_\kappa$ in the GZ-potentialist semantics, where $\varphi$ is an assertion in the language of set theory and $\vec a\in V_\kappa$. Thus, there is some inaccessible cardinal $\lambda$ such that for all inaccessible cardinals $\theta\geq\lambda$ we have $V_\theta\satisfies\varphi$. As before, this is a $\Sigma_3$ assertion, and so it is true in $V_\kappa$. So there is a $\lambda<\kappa$, such that every inaccessible cardinal $\theta\geq\lambda$ in $V_\kappa$ has $V_\theta\satisfies\varphi(\vec a)$. The assertion that $\lambda$ has this property has complexity $\Pi_2$, and so this $\lambda$ also works in $V$. Therefore, $V_\kappa\satisfies\varphi(\vec a)$, verifying this instance of the maximality principle, as desired.

($1\to 2$) Assume that the Grothendieck universe axiom holds and that the GZ-potentialist maximality principle is true at GZ-univese $V_\kappa$. In particular, $\kappa$ is an inaccessible cardinal. So it is a $\beth$-fixed point and therefore $\Sigma_1$-correct. If a $\Sigma_2$ assertion is true in $V$, then this is witnessed in all large enough $V_\theta$, including when $\theta$ is inaccessible, and so by the maximality principle, it is witnessed in $V_\kappa$, so $\kappa$ is $\Sigma_2$-correct. Finally, suppose that some $\Sigma_3$-assertion is true. All such statements have the form $\exists x\forall y\, \psi(x,y,\vec a)$, where $\psi$ has complexity $\Sigma_1$. This implies $\possible\necessary\exists x\forall y\, \psi(x,y,\vec a)$ at $V_\kappa$, since once $\lambda$ is large enough, then $x$ will exist in $V_\theta$ for any larger $\theta$. So by the maximality principle, we conclude that $V_\kappa\satisfies\exists x\forall y\,\psi(x,y,\vec a)$, verifying this instance of $\Sigma_3$-correctness. Since $\kappa$ is inaccessible, we have proved that $\kappa$ is $\Sigma_3$-reflecting.
\end{proof}

A cardinal $\kappa$ is {\df reflecting}, if it is inaccessible and correct, so that it realizes the scheme $V_\kappa\elesub V$. In other words, it is $\Sigma_n$-reflecting for every natural number $n$. As with the correct cardinals, this is not a first-order expressible concept, although it can be expressed as a scheme.

\begin{theorem}
 For any inaccessible cardinal $\kappa$, the following are equivalent:
 \begin{enumerate}
   \item The Grothendieck universe axiom holds and $V_\kappa$ satisfies the GZ-potentialist maximality principle for assertions in the potentialist language $\mathcal{L}_\in^\Diamond$ with parameters from $V_\kappa$. That is,
       $$\Val_\mathcal{Z}(V_\kappa,\mathcal{L}_{\in,V_\kappa}^\Diamond)=S5.$$
   \item $\kappa$ is a reflecting cardinal.
 \end{enumerate}
\end{theorem}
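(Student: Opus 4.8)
The plan is to run the argument in close parallel with Theorem~\ref{Theorem.rank-potentialist-S5-potentialist-language}, its rank-potentialist counterpart, but with the worlds restricted to the inaccessible levels $V_\kappa$ and with the Grothendieck universe axiom tracked throughout as in Theorem~\ref{Theorem.GZ-S5-LST}. As in those results, I would read the whole equivalence as a pair of interderivable theorem schemes, since neither ``$\kappa$ is correct'' nor ``S5 is valid for all $\mathcal{L}_\in^\Diamond$-assertions'' is expressible by a single first-order sentence.

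For ($2\implies 1$), assume $\kappa$ is reflecting, so that $\kappa$ is inaccessible and $V_\kappa\elesub V$. I would first recover the Grothendieck universe axiom exactly as in Theorem~\ref{Theorem.GZ-S5-LST}: for each $\alpha<\kappa$ the assertion ``there is an inaccessible above $\alpha$'' is true in $V$, witnessed by $\kappa$, hence true in $V_\kappa$ by correctness, so the inaccessibles are unbounded in $V_\kappa$; the axiom is $\Pi_3$, so it transfers back up to $V$ using the $\Sigma_3$-agreement contained in full correctness. To verify axiom $5$, suppose $V_\kappa\satisfies_{\mathcal{Z}}\possible\necessary\varphi(\vec a)$ with $\varphi\in\mathcal{L}_\in^\Diamond$ and $\vec a\in V_\kappa$. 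The point is that, for a fixed $\varphi$, the GZ-potentialist satisfaction ``$V_\theta\satisfies_{\mathcal{Z}}\varphi(\vec a)$'' unfolds into a genuine $\mathcal{L}_\in$-formula in $\theta$ and $\vec a$, the modal operators becoming quantifiers over inaccessible cardinals; thus the hypothesis is literally $V\satisfies\Theta(\vec a)$, where $\Theta(\vec a)$ abbreviates ``$\exists$ inaccessible $\lambda\,\forall$ inaccessible $\theta\geq\lambda\,(V_\theta\satisfies_{\mathcal{Z}}\varphi(\vec a))$.'' By $V_\kappa\elesub V$ we obtain an inaccessible $\lambda<\kappa$ such that, inside $V_\kappa$, every inaccessible $\theta\geq\lambda$ has $V_\theta\satisfies_{\mathcal{Z}}\varphi(\vec a)$; applying elementarity once more to this assertion, whose only parameters $\lambda,\vec a$ lie in $V_\kappa$, pushes it back up to $V$. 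The crucial step is then to instantiate the universal quantifier at $\theta=\kappa$: since $\kappa$ is itself inaccessible and $\kappa\geq\lambda$, we conclude $V_\kappa\satisfies_{\mathcal{Z}}\varphi(\vec a)$.

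For ($1\implies 2$), I would show by induction on $n$ that $\kappa$ is $\Sigma_n$-correct. The base case is $\Sigma_3$-correctness: since S5 for $\mathcal{L}_\in^\Diamond$ entails S5 for the smaller language $\mathcal{L}_\in$, Theorem~\ref{Theorem.GZ-S5-LST} applies and $\kappa$ is $\Sigma_3$-reflecting. For the inductive step, assume $\Sigma_n$-correctness and let $\exists x\,\psi(x,\vec a)$ be a true $\Sigma_{n+1}$ assertion in $V$, with $\psi$ of complexity $\Pi_n$ and $\vec a\in V_\kappa$. Because the Grothendieck universe axiom holds, $\mathcal{Z}$ provides a potentialist account of $V$, so Theorem~\ref{Theorem.Potentialist-translation} applies: fixing a witness $x_0\in V$, the translation $\psi^\Diamond(x_0,\vec a)$ holds at every GZ-world containing $x_0$ and $\vec a$, so $\exists x\,\psi^\Diamond(x,\vec a)$ becomes upward-persistent past any sufficiently large inaccessible world and hence $V_\kappa\satisfies_{\mathcal{Z}}\possible\necessary\exists x\,\psi^\Diamond(x,\vec a)$. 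Axiom $5$ then supplies $x\in V_\kappa$ with $V_\kappa\satisfies_{\mathcal{Z}}\psi^\Diamond(x,\vec a)$, which by the potentialist translation again means $V\satisfies\psi(x,\vec a)$; the converse reflection is immediate from $\Pi_n$-correctness, so $\kappa$ is $\Sigma_{n+1}$-correct. By induction $\kappa$ is correct, and being inaccessible it is reflecting.

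The main obstacle I anticipate is getting the interaction of full elementarity with inaccessibility exactly right in ($2\implies 1$): one must verify that ``$V_\theta\satisfies_{\mathcal{Z}}\varphi$'' really is a single $\mathcal{L}_\in$-formula, so that correctness can transport it between $V_\kappa$ and $V$, and above all that the final instantiation $\theta=\kappa$ is legitimate. It is precisely this last instantiation that uses the \emph{inaccessibility} of $\kappa$ rather than merely its correctness, and it is what distinguishes this theorem from its rank-potentialist analogue, where the corresponding world $V_\delta$ is always available for instantiation with no inaccessibility needed. A secondary care-point is the scheme-level formalization already flagged above.
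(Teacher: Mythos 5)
Your proposal is correct and is essentially the paper's own proof: the paper simply instructs the reader to adapt theorem~\ref{Theorem.rank-potentialist-S5-potentialist-language} in the same way that theorem~\ref{Theorem.GZ-S5-LST} adapts theorem~\ref{Theorem.rank-potentialism-S5-language-of-set-theory}, and your argument carries out exactly that adaptation, including the two points that genuinely need care (the expressibility of $V_\theta\satisfies_{\mathcal{Z}}\varphi$ in $\mathcal{L}_\in$ so that correctness can transport it, and the final instantiation at $\theta=\kappa$, which is where inaccessibility of $\kappa$ is used). The scheme-level formalization remark likewise matches the paper's discussion following the rank-potentialist version.
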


\begin{proof}
Simply adapt the proof of theorem~\ref{Theorem.rank-potentialist-S5-potentialist-language} in the same way that theorem~\ref{Theorem.GZ-S5-LST} adapts the proof of theorem~\ref{Theorem.rank-potentialism-S5-language-of-set-theory}.
\end{proof}

\subsection{Transitive-set potentialism}

Consider next the potentialist account of $V$ arising from the class of all transitive sets
 $$\mathcal{T}=\set{W\mid W\text{ is transitive}}.$$
Thus, $\possible\psi$ is true at a transitive set $W$ if there is another transitive set $U$ with $W\of U$ and $U\satisfies_{\mathcal{T}}\psi$. Let us call this {\df transitive-set potentialism}. Since transitive sets can grow both in height and in width, this form of potentialism can be viewed as a form both of height and width potentialism. But it does not exhibit potentiality equally in height and width, since any transitive set $W$ can ultimately be completed with respect to width, by moving to the smallest $V_\beta$ containing it, and then no additional subsets of sets in $W$ will ever arise in $\mathcal{T}$. Because of this, the transitive-set potentialist account is somewhat milder with respect to  width potentialism than it is with respect to height potentialism: any set appearing in a universe fragment will eventually get all its possible subsets, after which time it will not grow in width any further, although the heights of the transitive sets never stabilize in this way, since every transitive set is included in another transitive set with additional larger ordinals. In this sense, worlds in transitive-set potentialism are width-completable in a way that they are not height-completable.

\begin{theorem}\label{Theorem.Transitive-sets-bounds}
 In transitive-set potentialism, every world $W$ obeys
    $$S4.2\ \of \ \Val_\mathcal{T}(W,\mathcal{L}_{\in,W}^\Diamond)\ \of \ \Val_\mathcal{T}(W,\mathcal{L}_\in)\ \of \ S5.$$
 The lower bound is sharp, in that some worlds $W$ have
    $$S4.2\ =\ \Val_\mathcal{T}(W,\mathcal{L}_{\in,W}^\Diamond)\ = \ \Val_\mathcal{T}(W,\mathcal{L}_\in).$$
\end{theorem}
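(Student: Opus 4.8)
The plan is to handle the two easy inclusions and the $S5$ upper bound first, and then devote the real work to pinning down a world that realizes $S4.2$ exactly. For the lower bound $S4.2\of\Val_\mathcal{T}(W,\mathcal{L}^\Diamond_{\in,W})$, I would first observe that the accessibility relation of $\mathcal{T}$ is directed: given two transitive sets $U_1,U_2$ extending a common world, the union $U_1\union U_2$ is again transitive and extends both. Hence Theorem~\ref{Theorem.lower-bounds}(3) applies and shows that S4.2 is valid at every world, for all assertions of $\mathcal{L}^\Diamond_{\in,W}$ with parameters. The central inclusion $\Val_\mathcal{T}(W,\mathcal{L}^\Diamond_{\in,W})\of\Val_\mathcal{T}(W,\mathcal{L}_\in)$ is simply the general monotonicity of validity under shrinking the class of substitution instances, since $\mathcal{L}_\in$ is less expressive than $\mathcal{L}^\Diamond_{\in,W}$.

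For the upper bound $\Val_\mathcal{T}(W,\mathcal{L}_\in)\of S5$ I would reuse the height dial from rank potentialism essentially verbatim. For $j<\omega$ let $d_j$ assert that $\Ord^W=\lambda+j$ for some limit ordinal $\lambda$ (or zero); in a transitive set $W$ the ordinals form an ordinal $\Ord^W$, so exactly one $d_j$ holds, and from any $W$ one may adjoin ordinals to reach height $\lambda+j$ for any prescribed $j$ while remaining transitive. This is an infinite dial expressible in $\mathcal{L}_\in$ without parameters, so Theorems~\ref{Theorem.Switches-iff-dials} and~\ref{Theorem.Switches-S5} place the validities of every world inside S5.

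The heart of the matter is the sharpness claim, for which I would exhibit independent buttons that witness the directed-but-nonlinear character through width growth, and then invoke Theorem~\ref{Theorem.Buttons+switches-S4.2}. Fix a parameter-free definable partition of $\omega$ into infinitely many infinite pieces $\langle B_i\mid i<\omega\rangle$, none of them an ordinal (for each standard $i$ the set $B_i$ is definable without parameters). Let $b_i$ be the $\mathcal{L}_\in$-sentence ``there is an infinite set $A$ with $A\of B_i$''. Each $b_i$ is a pure button: once such an $A$ has appeared it persists, and from any world one may grow to include $\omega$ and then adjoin the set $B_i\of\omega$ to push it, so $\possible\necessary b_i$ holds everywhere. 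In the world $W=\omega+1$ every $b_i$ is unpushed, since its only infinite element is $\omega$, which is contained in no $B_i$.

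The main obstacle is verifying mutual independence, which is exactly where the width dimension does its work. Because the $B_i$ are pairwise disjoint, adjoining an infinite subset of $B_i$ never creates an infinite subset of $B_{i'}$ for $i'\neq i$, so any finite family of not-yet-pushed buttons can be pushed simultaneously without disturbing the others. Independence from the dial holds because the witnessing sets $A\of B_i$ are not ordinals, so pushing buttons leaves $\Ord^W$ fixed, while conversely adjoining ordinals above $\omega$ adds no new subset of any $B_i$ and hence pushes no button. Thus $\mathcal{T}$ admits arbitrarily large finite families of independent buttons mutually independent with the height dial, all expressed in $\mathcal{L}_\in$ without parameters. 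By Theorem~\ref{Theorem.Buttons+switches-S4.2} the validities of $W=\omega+1$, where no button is pushed, are contained in $S4.2$; and since the control statements lie in $\mathcal{L}_\in$, this bound already holds for $\mathcal{L}_\in$-substitution instances, giving $\Val_\mathcal{T}(W,\mathcal{L}_\in)\of S4.2$. Combining with the lower bound valid at every world yields $S4.2=\Val_\mathcal{T}(W,\mathcal{L}^\Diamond_{\in,W})=\Val_\mathcal{T}(W,\mathcal{L}_\in)$, as desired. The conceptual point I would emphasize is that it is precisely the \emph{width} of transitive-set potentialism---freely adjoinable, mutually incomparable subsets of $\omega$---that supplies genuinely independent buttons and so pins the lower bound at $S4.2$ rather than the $S4.3$ of the linearly ordered rank-potentialist system.
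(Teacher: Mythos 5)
Your proof is correct and follows essentially the same route as the paper: directedness of $\mathcal{T}$ for the S4.2 lower bound, the ordinal-height dial for the S5 upper bound, and an independent family of buttons built from definable pairwise-disjoint infinite subsets of $\omega$, fed into theorem~\ref{Theorem.Buttons+switches-S4.2}, for sharpness of the lower bound. The only difference is cosmetic: the paper's buttons assert that $V_\omega$ and the set $B_k$ itself exist as elements of the world, whereas yours assert the existence of some infinite subset of $B_i$; both are upward absolute and mutually independent for the same disjointness reason.
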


\begin{proof}
Since the union of two transitive sets is transitive, it follows that the potentialist system of all transitive sets is directed and therefore by theorem~\ref{Theorem.lower-bounds} every assertion of S4.2 is valid. To achieve S5 as an upper bound, it suffices by theorems~\ref{Theorem.Switches-S5} and~\ref{Theorem.Switches-iff-dials} to show that this system admits an infinite dial. For this, we can use the same dial $d_j$ as in theorem~\ref{Theorem.rank-potentialism-bounds}, namely, the assertion $d_j$ that the ordinals have the form $\lambda+j$ where $\lambda$ is either a limit ordinal or zero. This is expressible by a sentence in the language of set theory, correctly interpreted inside any transitive set, and since any transitive set can be extended to any desired larger ordinal height, it forms a dial.

To show that the lower bound is sharp, it suffices by theorem~\ref{Theorem.Buttons+switches-S4.2} to find an independent family of buttons, independent of this dial. For this, let $\<B_k\mid k\in \omega>$ be an infinite list of distinct arithmetically definable infinite, co-infinite sets of natural numbers. Let $b_k$ be the assertion, ``$V_\omega$ exists and also $B_k$ exists.'' Note that this assertion is upward absolute for transitive sets, since if $V_\omega$ and $B_k$ exist in some transitive set, then this will continue to be true in any larger transitive set. So each $b_k$ is a button. Furthermore, these assertions form an independent family of buttons, since if $M$ is any transitive set, then $M\union V_\omega\union\singleton{B_k}$ is the union of transitive sets and hence also transitive, and we have added $B_k$ without adding any other $B_j$ for $j\neq k$. Since these buttons and dial values can be controlled independently of each other without interference, it follows by theorem~\ref{Theorem.Buttons+switches-S4.2} that the modal validities in any world where infinitely many of the buttons are not yet pushed will be contained within and hence equal to S4.2, as desired.
\end{proof}

The upper bound of theorem~\ref{Theorem.Transitive-sets-bounds} also is sharp, in light of the following two theorems.

\begin{theorem}\label{Theorem.transitive-set-potentialism-S5}
 The following are equivalent in transitive-set potentialism.
 \begin{enumerate}
   \item The potentialist maximality principle holds for world $M$ for assertions in the language of set theory with parameters from $M$. That is, $$\Val_\mathcal{T}(M,\mathcal{L}_{\in,M})=S5.$$
   \item $M=V_\delta$ for some $\Sigma_2$-correct cardinal $\delta$.
 \end{enumerate}
\end{theorem}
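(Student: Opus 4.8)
The plan is to follow the template of Theorem~\ref{Theorem.rank-potentialism-S5-language-of-set-theory}, but first to isolate the structural feature that distinguishes transitive-set potentialism from rank potentialism. Because the system is directed and every transitive set can be enlarged to accommodate any given set, the modal assertion $\possible\necessary\varphi$ admits a \emph{world-independent} reformulation of lower quantifier complexity, and this is exactly what lowers the correctness demand from $\Sigma_3$ to $\Sigma_2$. Concretely, I would show that for every transitive world $M$ and every $\vec a\in M$,
$$M\satisfies_\mathcal{T}\possible\necessary\varphi(\vec a)\quad\Iff\quad V\satisfies\Theta_\varphi(\vec a),$$
where $\Theta_\varphi(\vec a)$ asserts ``there is a transitive set $N$ with $\vec a\in N$ such that every transitive $U\supseteq N$ satisfies $\varphi(\vec a)$.'' The forward direction simply unpacks the potentialist semantics; the reverse uses that if $N_0$ witnesses $\Theta_\varphi$, then $N_0\cup M$ is a transitive superset of $M$ all of whose transitive extensions still satisfy $\varphi(\vec a)$. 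The two salient points are that the right-hand side does not mention $M$, and that, since the inner clause is $\Pi_1$ in $N,\vec a$ (satisfaction in a set model being $\Delta_1$), the whole of $\Theta_\varphi$ is $\Sigma_2(\vec a)$.

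For $(2\implies 1)$, assume $V_\delta\elesub_{\Sigma_2}V$; since S4 already holds by theorem~\ref{Theorem.lower-bounds}, it suffices to verify axiom~5. Suppose $V_\delta\satisfies\possible\necessary\varphi(\vec a)$, so $V\satisfies\Theta_\varphi(\vec a)$. I would reflect this $\Sigma_2$ statement downward to obtain a witness $N\in V_\delta$ for which $V_\delta$ believes the $\Pi_1$ kernel, and then reflect that $\Pi_1$ kernel back up to $V$ using $\Sigma_2$-, hence $\Pi_1$-, elementarity. Since $N\in V_\delta$ and $V_\delta$ is itself a transitive superset of $N$ in $V$ of rank above $N$, applying the $\Pi_1$ clause to the particular transitive extension $U=V_\delta$ yields $V_\delta\satisfies\varphi(\vec a)$, as required.

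For $(1\implies 2)$, I would first force $M$ to be a rank-initial segment using pure buttons. The assertions ``$V_{\rank(x)}$ exists'' (parameter $x\in M$) and ``$\mathcal{P}(a)$ exists'' (parameter $a\in M$) are pure buttons whose $\possible\necessary$ forms always hold, so S5 forces them true at $M$; hence $M$ is closed under the true power set and contains $V_\gamma$ for each $\gamma\in\Ord\cap M$. A short rank computation—observing that a top-rank element together with power-set closure would force $\Ord\cap M\in M$, a contradiction—then yields $M=V_\delta$ with $\delta=\Ord\cap M$. The buttons ``$\beth_\gamma$ exists'' likewise force $\delta$ to be a $\beth$-fixed point, so $V_\delta=H_\delta\elesub_{\Sigma_1}V$ by \Levy\ reflection. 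Finally, for each $\Sigma_2$ truth $\sigma(\vec a)$ of $V$ with $\vec a\in V_\delta$, a witness together with downward $\Pi_1$-absoluteness to all transitive extensions shows $V\satisfies\Theta_\sigma(\vec a)$; thus $V_\delta\satisfies\possible\necessary\sigma(\vec a)$, and S5 delivers $V_\delta\satisfies\sigma(\vec a)$. This downward $\Sigma_2$-reflection, combined with $\Sigma_1$-elementarity, upgrades to full $V_\delta\elesub_{\Sigma_2}V$.

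I expect the main obstacle to lie in the two-directional reflection in $(2\implies 1)$: one must pull $\Theta_\varphi$ down to secure a witness $N$ \emph{inside} $V_\delta$ while simultaneously pushing its $\Pi_1$ kernel back up, so that $V_\delta$ itself qualifies as one of the transitive extensions over which $\varphi(\vec a)$ is guaranteed. It is precisely the legitimacy of $V_\delta$ as an accessible world of rank above $N$ that converts potential necessity into actual truth, and arranging the complexity accounting so that mere $\Sigma_2$-elementarity—rather than the $\Sigma_3$-correctness of the rank-potentialist case—suffices is the delicate point.
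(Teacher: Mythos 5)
Your proposal follows essentially the same route as the paper's proof: your explicit reformulation of $\possible\necessary\varphi(\vec a)$ as the world-independent assertion $\Theta_\varphi(\vec a)$, which is $\Sigma_2$ with a $\Pi_1$ kernel, is exactly the complexity computation the paper performs (implicitly) in the $(2\implies 1)$ direction, and your down-then-up reflection using $V_\delta$ itself as one of the transitive extensions of the reflected witness $N$ is the paper's argument verbatim. One step of $(1\implies 2)$ needs more care than you give it: the truth at $M$ of ``$\mathcal{P}(a)$ exists'' only says that $M$ contains a set collecting those subsets of $a$ that happen to lie in $M$, so it does not by itself yield closure under the \emph{true} power set. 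The paper closes this gap by noting that S5 in fact gives $\necessary$``$\mathcal{P}(a)$ exists'' at $M$, whereas if some $u\of a$ were missing from $M$, then $M\union\singleton{u}$ would be an accessible transitive world in which ``$\mathcal{P}(a)$ exists'' fails. Relatedly, these assertions are buttons but not \emph{pure} buttons, for precisely this reason---they can revert from true to false in passing to $M\union\singleton{u}$. With that one supplement (and the analogous check that $M$'s versions of the $V_\alpha$ are the real ones), your outline is correct; your extra intermediate stop at $\Sigma_1$-correctness via $\beth$-fixed points and \Levy\ reflection is harmless and in fact supplies the upward half of $\Sigma_2$-correctness, which the paper leaves tacit.
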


\begin{proof}
($2\implies 1$) Consider $M=V_\delta$, where $\delta$ is $\Sigma_2$-correct, and assume $\possible\necessary\varphi(\vec a)$ holds in $M$. So there is a transitive set $N$ such that in all larger transitive sets $U\fo N$, we have $U\satisfies\varphi(\vec a)$. This is a $\Sigma_2$ assertion about $\vec a$, and so it must already be true in $M$. So there is transitive set $N\in M$ such that, inside $M$, every transitive $U\fo N$ satisfies $\varphi(\vec a)$. This is a $\Pi_1$-property about $\vec a$ and this particular $N$, which therefore holds in $V$. Since $M$ itself is such a transitive set, we conclude $M\satisfies\varphi$, as desired.

($1\implies 2$) Assume that the potentialist maximality principle holds in $M$ for assertions in the language of set theory with parameters from $M$. First, we claim that $M$ is correct about power sets. If $a$ is any set in $M$, then we claim that $\possible\necessary\exists b\forall u\, u\of a\iff u\in b$. In other words, the existence of the power set of $a$ is possibly necessary. This is simply because once you move to a transitive set that has the actual power set of $a$, then it necessarily is the power set of $a$ in all larger transitive sets. So $M$ thinks $P(a)$ exists (that is, the power set relativized to $M$). If $M$ does not have the actual power set of $a$, then there is some $u\of a$ with $u\notin M$, in which case $M\union\singleton{u}$ is a transitive set containing $M$, which does not think that the power set of $a$ exists, contrary to this being necessary over $M$. So $M$ computes the power sets correctly. A similar argument shows that $M$ computes $V_\alpha$ correctly for any ordinal $\alpha\in M$, since the existence of $V_\alpha$ is possibly necessary, and $M$ cannot have a fake version of some $V_\alpha$, since in that case it would have to be wrong about some power sets. Also, for any set $a$ it is possible necessary that $a\in V_\alpha$ for some ordinal $\alpha$, and so this is already true in $M$. Thus, $M=V_\delta$ for some ordinal $\delta$. To see that $\delta$ must be $\Sigma_2$-correct, suppose that a $\Sigma_2$ assertion $\varphi(\vec a)$ is true in $V$. This is witnessed by the existence of some ordinal $\beta$ for which $V_\beta\satisfies\psi(\vec a)$ for some assertion $\psi$. So it is possibly necessary that, ``there is an ordinal $\beta$ for which $V_\beta$ exists and satisfies $\psi(\vec a)$.'' Thus, this must already be true in $M$, and so $\varphi(\vec a)$ is true already in $M$, as desired. So $\delta$ is $\Sigma_2$-correct.
\end{proof}

\begin{theorem}
 The following are equivalent in transitive-set potentialism.
 \begin{enumerate}
   \item The potentialist maximality principle holds for world $M$ for assertions in the potentialist language $\mathcal{L}_\in^\Diamond$ with parameters from $M$. That is, $$\Val_\mathcal{T}(M,\mathcal{L}_{\in,M}^\Diamond)=S5.$$
   \item $M\elesub V$. In other words, $M=V_\delta$ for a correct cardinal $\delta$, realizing the scheme $V_\delta\elesub V$.
 \end{enumerate}
\end{theorem}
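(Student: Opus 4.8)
The plan is to prove this in exactly the way the preceding theorem for rank potentialism, Theorem~\ref{Theorem.rank-potentialist-S5-potentialist-language}, was proved, adapting it to the transitive-set context just as Theorem~\ref{Theorem.transitive-set-potentialism-S5} adapts the language-of-set-theory version Theorem~\ref{Theorem.rank-potentialism-S5-language-of-set-theory}. Three preliminary facts set everything up. First, $\mathcal{T}$ provides a potentialist account of $V$: every transitive set is an $\in$-substructure of $V$, and from any transitive $W$ and any $a\in V$ one reaches the transitive set $W\union\TC(\singleton{a})$, so Theorem~\ref{Theorem.Potentialist-translation} is available, giving $V\satisfies\psi(\vec a)\iff W\satisfies_\mathcal{T}\psi^\Diamond(\vec a)$ for transitive $W\ni\vec a$. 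Second, for each fixed modal formula $\varphi\in\mathcal{L}_\in^\Diamond$ the transitive-set potentialist satisfaction relation ``$U\satisfies_\mathcal{T}\varphi(\vec a)$'' is definable in $V$ by an $\mathcal{L}_\in$-formula $\mathrm{Sat}_\varphi(U,\vec a)$: atomic and Boolean clauses are immediate, quantifiers are bounded by $U$, and each $\possible$ unfolds to the (unbounded but first-order) quantifier ``$\exists U'$ transitive with $U\of U'$.'' Third, since S4.2 is already valid at every world by Theorem~\ref{Theorem.Transitive-sets-bounds}, the maximality principle reduces, in the $(2\implies1)$ direction, to verifying axiom~5, namely $\possible\necessary\varphi(\vec a)\implies\varphi(\vec a)$ for all $\varphi\in\mathcal{L}_\in^\Diamond$ and $\vec a\in M$.

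For $(2\implies1)$, assume $M=V_\delta$ with $V_\delta\elesub V$ and suppose $\possible\necessary\varphi(\vec a)$ holds at $M$. The key is to express this as a statement free of the parameter $M=V_\delta$, which is legitimate because the system is directed: $\possible\necessary\varphi(\vec a)$ at $M$ implies that in $V$ there is a transitive set $N\ni\vec a$ such that every transitive $U\fo N$ satisfies $\mathrm{Sat}_\varphi(U,\vec a)$. This is a single $\mathcal{L}_\in$-sentence with parameter $\vec a\in V_\delta$, so by $V_\delta\elesub V$ it reflects down: inside $V_\delta$ there is a transitive $N$ with $\vec a\in N$ for which $V_\delta\satisfies\forall U(U\text{ transitive}\wedge N\of U\to\mathrm{Sat}_\varphi(U,\vec a))$. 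Now transfer this universal statement back up by full elementarity, so that it holds in $V$ with $U$ ranging over \emph{all} transitive supersets of $N$. Since $N\in V_\delta$ is transitive we have $N\of V_\delta$, so $V_\delta$ is itself a transitive set extending $N$; taking $U=V_\delta$ yields $V\satisfies\mathrm{Sat}_\varphi(V_\delta,\vec a)$, that is, $M\satisfies_\mathcal{T}\varphi(\vec a)$. This verifies axiom~5 and hence S5 for $\mathcal{L}_{\in,M}^\Diamond$.

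For $(1\implies2)$, assume S5 holds at $M$ for $\mathcal{L}_{\in,M}^\Diamond$. Because $\mathcal{L}_\in\subseteq\mathcal{L}_\in^\Diamond$ forces $\Val_\mathcal{T}(M,\mathcal{L}_{\in,M}^\Diamond)\of\Val_\mathcal{T}(M,\mathcal{L}_{\in,M})\of S5$, the S5-hypothesis gives $\Val_\mathcal{T}(M,\mathcal{L}_{\in,M})=S5$, so by Theorem~\ref{Theorem.transitive-set-potentialism-S5} we already have $M=V_\delta$ with $\delta$ being $\Sigma_2$-correct. I then run the level-by-level induction of Theorem~\ref{Theorem.rank-potentialist-S5-potentialist-language}: assuming $\delta$ is $\Sigma_n$-correct for some $n\geq2$, let a $\Sigma_{n+1}$ assertion $\exists x\,\psi(x,\vec a)$ (with $\psi\in\Pi_n$, $\vec a\in V_\delta$) hold in $V$, witnessed by some $x_0$. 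By Theorem~\ref{Theorem.Potentialist-translation}, $\psi^\Diamond(x_0,\vec a)$ holds at every transitive set containing $x_0,\vec a$, so picking such a world $W_0$ and setting $N=\TC(M\union W_0)$ shows $\necessary\exists x\,\psi^\Diamond(x,\vec a)$ at $N\fo M$, hence $\possible\necessary\exists x\,\psi^\Diamond(x,\vec a)$ at $M$. Applying S5 to this $\mathcal{L}_{\in,M}^\Diamond$-instance gives $x_1\in M$ with $M\satisfies_\mathcal{T}\psi^\Diamond(x_1,\vec a)$, whence $V\satisfies\psi(x_1,\vec a)$ by Theorem~\ref{Theorem.Potentialist-translation}; since $\psi$ is $\Pi_n$, $\Sigma_n$-correctness pulls $\psi(x_1,\vec a)$ down to $V_\delta$, so $V_\delta\satisfies\exists x\,\psi(x,\vec a)$ and $\delta$ is $\Sigma_{n+1}$-correct. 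The induction yields $V_\delta\elesub V$.

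I expect the $(2\implies1)$ direction to be the main obstacle, since the natural formulation of $\possible\necessary\varphi$ at $M$ refers to the world $M=V_\delta$, which cannot serve as a parameter inside $V_\delta$; the argument only goes through once one notices that directedness converts $\possible\necessary\varphi$ into a parameter-free ``tail'' statement, and that the decisive step is not a downward reflection but the \emph{upward} transfer of the reflected universal clause, which re-enlarges the range of $U$ to include $M$ itself. As in Theorem~\ref{Theorem.rank-potentialist-S5-potentialist-language}, one should note the formalization: since correctness and $\mathcal{L}_\in^\Diamond$-validity are both schemes and $\mathrm{Sat}_\varphi$ is defined formula-by-formula by Tarski's theorem, the statement is properly read as two \ZFC-provable theorem schemes, one deriving each instance of~(2) from the theory expressed by~(1) and one conversely.
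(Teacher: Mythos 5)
Your proposal is correct and follows essentially the same route as the paper: the forward direction is obtained by combining the $\Sigma_2$-correctness characterization of Theorem~\ref{Theorem.transitive-set-potentialism-S5} with the level-by-level bootstrapping via the potentialist translation exactly as in Theorem~\ref{Theorem.rank-potentialist-S5-potentialist-language}, and the converse reflects the tail-witness $N$ down into $V_\delta$ by elementarity and then transfers the universal clause back up so that $U=V_\delta$ itself qualifies. Your added care about the definability of the potentialist satisfaction relation, the role of directedness, and the scheme-theoretic formalization only makes explicit what the paper leaves implicit.
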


\begin{proof}
 This is like the proof of theorem~\ref{Theorem.rank-potentialist-S5-potentialist-language} combined with the ideas of theorem~\ref{Theorem.transitive-set-potentialism-S5}. If the maximality principle holds for world $M$ in the potentialist language, then we know by theorem~\ref{Theorem.transitive-set-potentialism-S5} that $M=V_\delta$ for a $\Sigma_2$-correct cardinal $\delta$. But since the transitive sets form a potentialist account of $V$ in the manner of theorem~\ref{Theorem.Potentialist-translation}, we can as in theorem~\ref{Theorem.rank-potentialist-S5-potentialist-language} show that $\delta$ is $\Sigma_n$-correct for every $n$.

 Conversely, assume that $M=V_\delta\elesub V$ and $\possible\necessary\varphi(\vec a)$ holds in $M$, where $\varphi$ is a $\mathcal{L}_\in^\Diamond$ assertion. So there is a transitive set $N$, such that in all larger transitive sets $U\fo N$ we have $U\satisfies\varphi(\vec a)$. Since the existence of a such a set $N$ and the potentialist semantics are expressible in the language of set theory, it follows from $M\elesub V$ that there is such a set already in $M$, and consequently $M\satisfies\varphi(\vec a)$, as desired.
\end{proof}

Just as with rank potentialism, one may not want to allow \emph{all} transitive sets, but only some, say, those that satisfy a given c.e.~theory $T$. In this case, the collection of modal validities can depend on the theory and on the set-theoretic background.

\begin{theorem}
  Suppose that $T$ is a (sufficient) c.e.~theory and every set $x$ in $V$ is an element of a transitive model of $T$. Then the collection of all such models $$\mathcal{T}_T=\set{W\mid W\text{ is transitive and }W\satisfies T}$$ provides a potentialist account of $V$. Furthermore, every world $W$ in this system obeys
    $$S4.2\ \of \ \Val_{\mathcal{T}_T}(W,\mathcal{L}_{\in,W}^\Diamond)\ \of \
                  \Val_{\mathcal{T}_T}(W,\mathcal{L}_\in)\ \of \ S5.$$
  Reaching toward the lower bound, there is a world $W$ obeying
    $$\Val_{\mathcal{T}_T}(W,\mathcal{L}_{\in,W}^\Diamond)\ \of \
                  \Val_{\mathcal{T}_T}(W,\mathcal{L}_\in)\of S4.3.$$
  Examples show that the modal validities true at every world of $\mathcal{T}_T$ can be exactly S4.2, exactly S4.3, or exactly some other intermediate modal theory, depending on the theory $T$ and on the set-theoretic background $V$.
\end{theorem}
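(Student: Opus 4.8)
The plan is to follow the proofs of Theorems~\ref{Theorem.Transitive-sets-bounds} and~\ref{Theorem.Definable-class-A-rank-potentialism}, with the covering hypothesis---that every set lies in a transitive model of $T$---supplying the extensions that arbitrary transitive sets supplied before. First I would dispatch the potentialist account and directedness together: given a world $W\in\mathcal{T}_T$ and any $a\in V$ (or a second world $W'$), apply the covering hypothesis to the set $\{W,a\}$ (respectively $\{W,W'\}$) to find a transitive $U\models T$ containing it; since $U$ is transitive, $\{W,a\}\in U$ forces $W\subseteq U$ and $a\in U$, so $W$ accesses $U$ and accommodates $a$, and likewise any two worlds have a common extension in $\mathcal{T}_T$. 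Directedness then yields $S4.2\of\Val$ by Theorem~\ref{Theorem.lower-bounds}, and the central inclusion $\Val(W,\mathcal{L}_{\in,W}^\Diamond)\of\Val(W,\mathcal{L}_\in)$ is automatic, since validity for the less expressive language is the weaker demand.

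The crux is the $S5$ upper bound, and with it the $S4.3$ world of the third assertion; I would obtain both at once by constructing a uniform long ratchet and invoking Theorem~\ref{Theorem.Long-ratchet=S4.3}. The obstacle is that the naive ratchet $r_\alpha=$``$\alpha$ exists'' fails here: a transitive model of a sufficient theory $T$ has limit ordinal height, so once $\alpha$ exists so does $\alpha+1$, and the ratchet cannot stop at volume $\alpha$. Instead I would count, through the potentialist translation as in Theorem~\ref{Theorem.Definable-class-A-rank-potentialism}, a suitably sparse absolute class of ordinals $A$ (say the $\xi$ with $V_\xi\models T$, or the cardinals), letting $r_\alpha$ assert via $(\,\cdot\,)^\Diamond$ that at least $\alpha$ elements of $A$ have appeared; by Theorem~\ref{Theorem.Potentialist-translation} this faithfully reports the true $A$ of $V$. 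The genuinely hard point is realizability: to have the ratchet stop at volume $\alpha$ one needs a model of $T$ whose height lies strictly between the $\alpha$-th and $(\alpha+1)$-th element of $A$, and producing models of $T$ of such intermediate heights---a ``trimming'' that arbitrary transitive sets make trivial but models of $T$ do not---is where the covering hypothesis and the sufficiency of $T$ must be used. Once such a uniform long ratchet is in hand, Theorem~\ref{Theorem.Long-ratchet=S4.3} gives $\Val\of S5$ at every world and $\Val\of S4.3$ at the worlds of ratchet volume zero. For the $S5$ upper bound in isolation a dial suffices, so if only the residues of the $A$-count (rather than exact stopping values) are realizable one still secures $\Val\of S5$ via Theorems~\ref{Theorem.Switches-S5} and~\ref{Theorem.Switches-iff-dials}.

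Finally, for the claim that the validities common to all worlds can be exactly $S4.2$, exactly $S4.3$, or an intermediate theory, I would exhibit tailored $T$ and $V$. When the height-trimming above succeeds the long ratchet forces $S4.3$. When instead $\mathcal{T}_T$ carries independent buttons (for instance distinct reals that appear cofinally and can be adjoined one at a time while remaining a model of $T$) independent of the dial, but no ratchet climbing to $S4.3$, Theorem~\ref{Theorem.Buttons+switches-S4.2} caps the common validities at $S4.2$, which with the directedness lower bound makes them exactly $S4.2$. Intermediate theories are arranged by engineering $T$ and $V$ so that the available control statements realize a bounded-branching frame condition strictly between directedness and linearity. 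It is precisely the sensitivity of this realizability analysis to $T$ and to the background universe that produces the range of possible answers.
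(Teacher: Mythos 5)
Your first paragraph matches the paper's proof exactly: applying the covering hypothesis to $\{W,a\}$ or to $\{W_0,W_1\}$ yields both the potentialist account of $V$ and full directedness, hence the S4.2 lower bound via theorem~\ref{Theorem.lower-bounds}, and the central inclusion is automatic. The trouble is the ratchet. You rightly observe that ``$\alpha$ exists'' no longer works, but the replacement you propose---counting, via the potentialist translation, the elements of a sparse absolute class $A$ of ordinals---has precisely the defect you yourself flag: to stop the ratchet at volume $\alpha$ one needs a transitive model of $T$ whose height falls strictly between the $\alpha$-th and $(\alpha+1)$-st elements of $A$, and nothing in the covering hypothesis guarantees such ``trimmed'' models exist, since the heights of transitive models of $T$ need not interleave with $A$ at all. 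You name this as the genuinely hard point and then leave it unresolved, so neither the S4.3 bound at a bottom world nor even the S5 upper bound is actually established (your dial fallback inherits the same realizability problem). The paper avoids the issue by choosing a ratchet intrinsic to the system itself: $r_\alpha$ asserts that the $\in$-relation on the transitive models of $T$ has rank at least $\alpha$. This relation is well-founded, the sufficiency of $T$ lets each world compute its rank, the rank can only increase along inclusion, and one stops at volume $\alpha$ by extending to a transitive model of $T$ of the appropriate rank; theorem~\ref{Theorem.Long-ratchet=S4.3} then delivers both the S4.3 bound at rank-zero worlds and the S5 bound at every world.

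The final claim requires actual examples, and your proposal only gestures at them. Two concrete problems. First, a long ratchet gives S4.3 as an \emph{upper} bound at low worlds; for the common validities to be \emph{exactly} S4.3 one also needs the .3 axiom valid at \emph{every} world, which your ``height-trimming succeeds'' scenario does not supply---the paper obtains it by taking $V=L$ and $T$ containing ``$V=L$,'' so that the worlds are literally the linearly ordered $L_\alpha$'s (and the exactly-S4.2 case is just the empty theory, reducing to theorem~\ref{Theorem.Transitive-sets-bounds}). Second, the strictly intermediate case needs a genuine construction: the paper, working in $V=L$ with a finite fragment $\ZFC^*$, takes $T$ to assert either $V=L$, or $V=L[c]$ for an $L$-generic Cohen real with no transitive set model of $\ZFC^*$, or $V=L[r]$ for an $L$-generic random real likewise; the least $L_\gamma\satisfies\ZFC^*$ can go either way but not both, refuting .3, while homogeneity of the two forcings ensures every world validates ``there are no three independent unpushed buttons,'' an assertion that is not an S4.2 theorem. ``Engineering $T$ and $V$ so that the available control statements realize a bounded-branching frame condition'' describes the goal rather than proving anything.
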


\begin{proof}The collection of models $\mathcal{T}_T$ provides a potentialist account of $V$, since if $W\in\mathcal{T}_T$, then $\<W,\in>$ is a substructure of $\<V,\in>$ and if $x$ is any set in $V$, then by the assumption on $T$ we have a transitive model containing $\{W,x\}$ as an element, which must therefore have $W\of U$. So the weak directedness property is fulfilled. But actually, a similar argument shows that $\mathcal{T}_T$ is fully directed: if $W_0,W_1\in\mathcal{T}_T$, then the assumption on $T$ ensures that there is $U\in\mathcal{T}_T$ with $\{W_0,W_1\}\in U$, which ensures $W_0,W_1\of U$, as desired. By theorem~\ref{Theorem.lower-bounds}, therefore, S4.2 is valid at every world for the potentialist account provided by $\mathcal{T}_T$, verifying statement 1.

Meanwhile, observe that $\mathcal{T}_T$ is well-founded with respect to $\in$, and so for any ordinal $\alpha$, we may let $r_\alpha$ be the assertion, ``the $\in$-relation on the transitive models of $T$ has rank at least $\alpha$.'' Our hypothesis in the theorem that $T$ is sufficient is meant to ensure that models of $T$ are able to refer to $T$ by means of its arithmetic definition and are also able to calculate the rank of a well-founded relation (so even some very weak set theories suffice). Since the rank can never go down as the transitive sets get larger, it follows that each $r_\alpha$ will necessarily imply all earlier $r_\beta$ for $\beta<\alpha$. And if $r_\alpha$ is not true in some transitive $W\satisfies T$, then we may simply extend $W$ to a transitive set $U\satisfies T$ of rank $\alpha+1$, so that $W\of U$ and $U\satisfies r_\alpha\wedge\neg r_{\alpha+1}$. So this is a uniform long ratchet, and therefore by theorem~\ref{Theorem.Long-ratchet=S4.3} any world in which this ratchet has not yet begun will have its validities contained within S4.3. And furthermore, the same theorem provides S5 as an upper bound for all the worlds.

Let us now give some examples showing that various modal theories can arise, depending on the theory and on the set-theoretic background. For the easiest case, if we take the empty theory, then $\mathcal{T}_T$ is the same as the class $\mathcal{T}$ of all transitive sets, whose modal validities we have already established as S4.2 in theorem~\ref{Theorem.Transitive-sets-bounds}. Next, assume $V=L$ in the background theory, and consider the theory $T$ consisting of the assertion ``V=L,'' so that the transitive models of this theory are exactly the models $L_\alpha$ appearing in the constructibility hierarchy. These are linearly ordered, and so S4.3 is valid for this potentialist system, and the validities are contained within S4.3 by the previous paragraph, yielding exactly S4.3 as the common modal theory.

Finally, assume $V=L$ in the background, and let's describe a theory $T$ for which the corresponding potentialist system $\mathcal{T}_T$ exhibits modal validities strictly in between S4.2 and S4.3. Let $\ZFC^*$ be a fixed finite fragment of $\ZFC$, which can prove the existence of the reals and which furthermore is invariant by Cohen-real forcing and by random-real forcing. Let $T$ be the theory asserting that either (i) $\ZFC^*+V=L$; or (ii) $\ZFC^*+V=L[c]$ for an $L$-generic Cohen real $c$, but there is no transitive set model of $\ZFC^*$; or (iii) $\ZFC^*+V=L[r]$ for an $L$-generic random real, but there is no transitive set model of $\ZFC^*$. For example, since $\ZFC^*$ is true in $L$, it follows by the finiteness of $\ZFC^*$ that it is true in some large $L_\gamma$, and by condensation the least $\gamma$ for which $L_\gamma\satisfies\ZFC^*$ is countable. Thus, we can easily construct $L_\gamma$-generic reals $c$ or $r$ and form the extensions $L_\gamma[c]$ and $L_\gamma[r]$. By using the least possible $\gamma$, these models will have no set models of $\ZFC^*$, and so we can see that both statements (ii) and (iii) are possible from some $L_\gamma$. Note that if one goes to a higher model, however, then (ii) and (iii) become impossible, because of the requirement that there is no transitive set model of $\ZFC^*$. In particular, both (ii) and (iii) are possible from the smallest $L_\gamma$ satisfying $\ZFC^*$, but once you have one of them, the other becomes impossible, since the forcing notions do not add generic reals of the other type. So this is exactly a violation of axiom (.3) in $L_\gamma$, and therefore the validities of this potentialist system are not S4.3. Meanwhile, we claim that the validities exceed S4.2. To see this, note that because of the homogeneity of Cohen forcing and random-real forcing, all the models of statement (ii) have the same theory, and similarly for statement (iii). Thus, every transitive model of $T$ is either a model of statement (i), and these are all linearly ordered, or else is a model of the theory of statement (ii) or of statement (iii). So it is impossible that a model of $T$ has three independent unpushed buttons. In particular, the assertion ``$p$, $q$ and $r$ are not three independent unpushed buttons,'' which is expressible in propositional modal lgoic, is valid for the potentialist system determined by the transitive models of $T$, but this assertion is not an S4.2 theorem, since some models of S4.2 do have three independent unpushed buttons. So the modal validities of $\mathcal{T}_T$ for this theory are strictly intermediate between S4.2 and S4.3, as claimed.
\end{proof}

\subsection{A remark on Solovay's set-theoretic modalities}

Before continuing, let us briefly discuss the connection of set-theoretic potentialism with some related set-theoretic modalities considered by Solovay. After proving his famous analysis of provability logic, Solovay had considered the modalities of ``true in all $V_\kappa$ for $\kappa$ inaccessible'' and also ``true in all transitive sets'' (see~\cite[chapter~13]{Boolos1993:TheLogicOfProvability} and this was extended in unpublished work of Enayat and Togha). To be precise, the intended Solovay modal semantics are that $V_\delta\satisfies\necessary\varphi$ if and only if $V_\delta$ thinks that $\varphi$ is true in all $V_\kappa$ for inaccessible $\kappa<\delta$, or $V_\delta$ thinks that $\varphi$ is true in all transitive sets in $V_\delta$, respectively.

The Solovay modalities are thus precisely the inverses of the Grothendieck-Zermelo and transitive-set potentialist modalities that we considered earlier in this article. One might more accurately describe them under the slogans ``true in all smaller $V_\kappa$ for inaccessible cardinals $\kappa$'' and ``true in all smaller transitive sets.'' The downward-oriented focus is amplified with nested modalities such as $\possible\necessary\varphi$.

This downward orientation of the Solovay modalities gives them a markedly non-potentialist character. Truly potentialist modal assertions, after all, reach outside of the current actual world to refer to objects that might potentially exist in a larger world, but which do not yet exist in the current world. Solovay's modalities are not potentialist in this way, and it is not surprising that he achieves very different modal validities than what we have found for the set-theoretic potentialist modalities in this article. The validity of the \Lob\ axiom for Solovay's modality, for example, is connected with the fact that his accessibility relation is inversely well-founded.

A separate, but related issue is that because the Kripke model for the Solovay modality has deflationary domains, one faces certain issues in the semantics of mixed quantifier/modal assertions, with the basic conundrum being the correct meaning for $\necessary\varphi(a)$ when there are accessible worlds in which $a$ does not exist (see~\cite{Garson2001:Quantification-in-modal-logic}).

\subsection{Forcing potentialism}

Let us now consider the version of set-theoretic potentialism that arises by considering the set-theoretic universe in the context of all its forcing extensions. This idea is closely connected with the modal logic of forcing, as in~\cite{HamkinsLoewe2008:TheModalLogicOfForcing, HamkinsLoewe2013:MovingUpAndDownInTheGenericMultiverse, HamkinsLeibmanLoewe2015:StructuralConnectionsForcingClassAndItsModalLogic}; the forcing modalities were introduced in~\cite{Hamkins2003:MaximalityPrinciple}. The modal logic of forcing is, at bottom, an instance of set-theoretic width potentialism and height actualism. Namely, we consider a model of set theory $M$ in the context of all its forcing extensions $M[G]$ and their further forcing extensions $M[G][H]$, interpreting $\possible\varphi$ as ``true in some forcing extension,'' and $\necessary\varphi$ as ``true in all forcing extensions.'' Since the relation of ground model to forcing extension coincides with the substructure relation amongst these models, it follows that the modal logic of forcing coincides with the potentialist semantics on this collection of models. Note that because the forcing modalities are expressible in the language of set theory, the distinction between $\mathcal{L}_\in$ and $\mathcal{L}_\in^\Diamond$ evaporates for this case. The main results of~\cite{HamkinsLoewe2008:TheModalLogicOfForcing} establish:

\begin{theorem}[{Hamkins, \Lowe}]\label{Theorem.HamkinsLowe-modal-logic-of-forcing-is-S4.2}
 In the collection of models arising as forcing extensions of a fixed countable model of \ZFC, considered under the potentialist semantics, every world $W$ obeys
    $$S4.2\ = \ \Val(W,\mathcal{L}_{\in,W})\ \of \ \Val(W,\mathcal{L}_\in)\ \of \ S5.$$
 Depending on the original model, some worlds $W$ can have
    $$S4.2\ = \ \Val(W,\mathcal{L}_{\in,W})\ = \ \Val(W,\mathcal{L}_\in).$$
 Depending on the original model, other worlds have
    $$\Val(W,\mathcal{L}_\in)=S5\quad\text{ or even }\quad\Val(W,\mathcal{L}_{\in,\R}) = S5.$$
\end{theorem}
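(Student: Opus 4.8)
The plan is to recover this statement using the control-statement toolkit of \S\ref{Section.tools}: first establish the chain of inclusions in full generality, and then exhibit worlds that pin down each bound. To begin, I would observe that the forcing extensions of a fixed model form a directed system in the sense relevant to the $.2$ axiom: given extensions $W[G]$ and $W[H]$, forcing over $W[H]$ with the poset $\P$ that produced $G$ yields, by the product lemma, a model $W[H][G']=W[G'][H]$ that is simultaneously a forcing extension of $W[H]$ and of a $\P$-extension $W[G']$ of $W$. Hence the accessibility relation is directed, and Theorem~\ref{Theorem.lower-bounds}(3) gives that S4.2 is valid at every world even with parameters, yielding the leftmost inclusion $S4.2\of\Val(W,\mathcal{L}_{\in,W})$. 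The middle inclusion $\Val(W,\mathcal{L}_{\in,W})\of\Val(W,\mathcal{L}_\in)$ is just monotonicity in the substitution class. For the rightmost inclusion $\Val(W,\mathcal{L}_\in)\of S5$, I would exhibit a switch, for instance the assertion toggled between $2^{\aleph_0}=\aleph_1$ and $2^{\aleph_0}=\aleph_2$ (each reachable by forcing from any world), and more generally arbitrarily large independent families of GCH-pattern switches; Theorem~\ref{Theorem.Switches-S5} (through Theorem~\ref{Theorem.Switches-iff-dials}) then bounds every world by S5.

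To upgrade the leftmost inclusion to the equality $S4.2=\Val(W,\mathcal{L}_{\in,W})$ of the first display, I would produce at an arbitrary world $W$ an independent family of buttons and switches in $\mathcal{L}_{\in,W}$ whose buttons are all unpushed at $W$, and invoke Theorem~\ref{Theorem.Buttons+switches-S4.2}. Fixing an increasing sequence $\kappa_0<\kappa_1<\cdots$ of uncountable cardinals of $W$ as parameters, let $b_n$ assert ``$\kappa_n$ is not a cardinal.'' Each $b_n$ is a pure button, since collapsing $\kappa_n$ is possible and being a non-cardinal is upward absolute; the family is independent because the surgical collapse $\Coll(\kappa_{n-1},\kappa_n)$ pushes $b_n$ while preserving every other $\kappa_m$ as a cardinal; and the switches can be placed on a block of cardinals above all the $\kappa_n$ so as not to interfere. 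Since each $\kappa_n$ is genuinely a cardinal of $W$, all the buttons are unpushed there, so Theorem~\ref{Theorem.Buttons+switches-S4.2} gives $\Val(W,\mathcal{L}_{\in,W})\of S4.2$, hence equality. The parameters are exactly what guarantee a fresh unpushed family at \emph{every} world, which is why the equality is asserted for $\mathcal{L}_{\in,W}$ rather than $\mathcal{L}_\in$.

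For the second display I would choose a world $W$ agreeing with $L$ on cardinals (say $W=L$, or any $W$ with $\aleph_n^W=\aleph_n^L$): now the buttons $b_n=$``$\aleph_n^L$ is not a cardinal'' and the GCH-switches are expressible in $\mathcal{L}_\in$ without parameters, $b_n$ is pushed by the surgical collapse $\Coll(\aleph_{n-1}^L,\aleph_n^L)$, and all buttons are unpushed at $W$ since every $\aleph_n^L$ is a cardinal there. Theorem~\ref{Theorem.Buttons+switches-S4.2} then gives $\Val(W,\mathcal{L}_\in)\of S4.2$, so $S4.2=\Val(W,\mathcal{L}_{\in,W})=\Val(W,\mathcal{L}_\in)$. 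For the third display I would instead locate worlds attaining the upper bound: if $W$ satisfies the forcing maximality principle $\MP$, then axiom~5, namely $\possible\necessary\varphi\to\varphi$, holds at $W$ for all sentences $\varphi$, and since S4 is always valid this forces $\Val(W,\mathcal{L}_\in)=S5$; the consistency of $\MP$ relative to \ZFC\ supplies such worlds, while the stronger $\MP(\R)$ supplies worlds with $\Val(W,\mathcal{L}_{\in,\R})=S5$.

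The main obstacle is the button construction. One must verify genuine independence, i.e.\ that a collapse pushing $b_n$ preserves exactly the cardinalities witnessing that the other $b_m$ remain unpushed (the closure and chain-condition bookkeeping for the surgical collapses, together with keeping the switch-block of cardinals disjoint from the collapse points), and one must ensure the family is unpushed at the world in question, which is automatic with parameters but requires the $L$-cardinal device in the parameter-free case. The genuinely non-elementary input lies in the third display, where attaining S5 rests on the consistency of the maximality principles $\MP$ and $\MP(\R)$ rather than on the present toolkit.
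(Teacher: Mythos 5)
Your overall architecture matches the paper's: product forcing for the validity of~.2, buttons and switches (via theorem~\ref{Theorem.Buttons+switches-S4.2}) for the $S4.2$ upper bound on validities, \GCH-pattern switches (via theorems~\ref{Theorem.Switches-S5} and~\ref{Theorem.Switches-iff-dials}) for the $S5$ upper bound, and the maximality principles $\MP$ and $\MP(\R)$ of Hamkins~2003 for the worlds attaining $S5$. The paper itself presents this theorem as a summary of the Hamkins--\Lowe\ results rather than reproving them, so your level of detail on the button bookkeeping is acceptable, and your parameter-based cardinal-collapse buttons are a reasonable variant of the buttons used there (though verifying that the surgical collapse preserves the other $\kappa_m$ in \emph{every} further extension, where the cardinal arithmetic may have been disturbed, is exactly the kind of care the original paper takes; you flag this yourself).

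There is, however, one genuine error: the claim that the forcing extensions of a fixed countable model form a \emph{directed} system, and the consequent appeal to theorem~\ref{Theorem.lower-bounds}(3). This is false, and the paper explicitly makes a point of it: by the non-amalgamation results, there are forcing extensions $W[G]$ and $W[H]$ (for instance, by mutually non-amalgamable Cohen reals) with no common forcing extension at all. Your own computation betrays the problem: the model $W[H][G']$ you produce extends $W[H]$ and $W[G']$, but $G'$ is a \emph{fresh} generic, so this is not a common extension of the two given worlds $W[G]$ and $W[H]$. What your product-forcing argument actually delivers is the validity of axiom~.2 at the level of substitution instances, argued directly: if $\possible\necessary\varphi$ holds at $W$, witnessed by a condition $p\in\P$ forcing $\necessary\varphi$, then over any extension $W[H]$ one can force with $\P$ below $p$ to obtain $W[H][G']=W[G'][H]$, and since $W[G']\satisfies\necessary\varphi$ by the forcing theorem, the further extension $W[H][G']$ satisfies $\varphi$, giving $\necessary\possible\varphi$ at $W$. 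So the conclusion you want is true, but it must be established by this direct argument on statements, not by citing frame directedness, which fails here. The repair is local, but as written the first step of your proof rests on a false premise.
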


The modal validities include S4 because inclusion is transitive and reflexive, and we get axiom .2 not because the collection of models is directed, which it isn't by the non-amalgamation results discussed in~\cite{Hamkins2016:UpwardClosureAndAmalgamationInTheGenericMultiverse, FuchsHamkinsReitz2015:Set-theoreticGeology}, but rather because any particular pair of statements $\psi$ and $\phi$ that are each separately forceably necessary, can be jointly forced by the product forcing. The main result of~\cite{HamkinsLoewe2008:TheModalLogicOfForcing} shows that in any model of $V=L$ and many others, the modal logic of forcing is exactly $\Val(W,\mathcal{L}_\in)=S4.2$, because there are independent families of buttons and switches. In the general case, the same argument shows that every world $W$ has $\Val(W,\mathcal{L}_{\in,W})=S4.2$, since there are always independent buttons and switches expressible with parameters. The same observation provides S5 generally as an upper bound, since one can express independent switches via the \GCH\ pattern without need for any parameters. The main result of~\cite{Hamkins2003:MaximalityPrinciple} shows that it is equiconsistent with \ZFC\ that S5 is valid with respect to sentences in the language of set theory (no parameters), achieving $\Val(W,\mathcal{L}_\in)=S5$; the same paper shows that $\MP(\R)$, the maximality principle allowing real parameters (that is, S5 with real parameters), has a slightly stronger consistency strength, achieving $\Val(W,\mathcal{L}_{\in,\R})=S5$.

Let us now enlarge the context by moving to the {\df generic multiverse} $\mathcal{M}$ of a model of set theory $M\satisfies\ZFC$, which is the collection of models obtained by closing under the process of forcing extensions and grounds. Because the generic multiverse of a countable model exhibits non-amalgamation (see~\cite{Hamkins2016:UpwardClosureAndAmalgamationInTheGenericMultiverse, FuchsHamkinsReitz2015:Set-theoreticGeology}), it therefore does not generally converge to a limit model. Nevertheless, we may view $\mathcal{M}$ under the potentialist semantics, and discover the modal validities of this perspective on potentialism. Indeed, the generic multiverse of a model of set theory is a natural instance of width potentialism plus height actualism, since the various models of the generic multiverse all have the same ordinals, even though sets can gain new subsets in a forcing extension. Since the modal operators are definable in the language of set theory, once again there is no distinction between $\mathcal{L}_\in$ and $\mathcal{L}_\in^\Diamond$. The following theorem was observed independently by Jakob Piribauer in his master's thesis \cite{Piribauer2017:The-modal-logic-of-generic-multiverses} at the University of Amsterdam, undertaken with supervisor Benedikt \Lowe.

\begin{theorem}\label{Theorem.Generic-multiverse-potentialism}
 In the potentialist system of the generic multiverse $\mathcal{M}$ of a fixed countable model of \ZFC\ set theory, every world $W$ obeys
    $$S4.2\ = \ \Val(W,\mathcal{L}_{\in,W})\ \of \ \Val(W,\mathcal{L}_\in)\ \of \ S5.$$
 The lower bound is sharp, for every world $M$ has a ground model $W\of M$ obeying
    $$S4.2\ = \ \Val(W,\mathcal{L}_{\in,W})\ = \ \Val(W,\mathcal{L}_\in).$$
 Depending on the original model, the upper bound also is sharp, for there can be other worlds $W$ with
    $$\Val(W,\mathcal{L}_\in)=S5\quad\text{ or even }\quad\Val(W,\mathcal{L}_{\in,\R}) = S5.$$
\end{theorem}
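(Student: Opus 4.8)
The plan is to reduce everything to the already-settled forcing case (theorem~\ref{Theorem.HamkinsLowe-modal-logic-of-forcing-is-S4.2}) by first pinning down the upward structure of the generic multiverse under the potentialist accessibility relation. The key observation I would establish first is that for any world $W\in\mathcal{M}$, the worlds accessible from $W$---the $U\in\mathcal{M}$ with $W\of U$---are exactly the forcing extensions of $W$. One inclusion is just closure of $\mathcal{M}$ under forcing. For the converse, since $U$ lies in the generic multiverse of $W$ there is a finite zig-zag of forcing steps connecting $W$ to $U$, and an induction on its length, using the intermediate-model theorem for set forcing (any $\ZFC$-model lying between $W$ and a set-forcing extension $W[G]$ is itself a set-forcing extension of $W$) together with the composition of forcing extensions, shows that $W\of U$ forces $U$ to be a forcing extension of $W$. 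Granting this---and recalling that the forcing modalities are definable in $\mathcal{L}_\in$, so that the $\mathcal{L}_\in$ and $\mathcal{L}_\in^\Diamond$ distinction evaporates---the potentialist semantics of $\mathcal{M}$ at each world $W$ coincides with the modal logic of forcing over $W$, and the whole theorem is essentially theorem~\ref{Theorem.HamkinsLowe-modal-logic-of-forcing-is-S4.2} read off at the various worlds of $\mathcal{M}$.

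With that reduction in hand the general bounds follow as in the forcing case. The lower bound $S4.2\of\Val(W,\mathcal{L}_{\in,W})$ comes from the $.2$ axiom, which I would verify not through directedness (which fails by non-amalgamation) but through mutually generic products: if $\possible\necessary\psi$ holds at $W$, witnessed by a forcing $\mathbb{P}$ after which $\psi$ is necessary, then given any accessible $W[H]$ one forces $\mathbb{P}$ over $W[H]$ by a $G$ mutually generic with $H$, reaching $W[H][G]=W[G][H]\satisfies\psi$ (an extension of $W[G]$), which establishes $\necessary\possible\psi$. For the upper bounds I would exhibit control statements: the parameter-free \GCH-pattern switches $s_n$ (``$2^{\aleph_n}=\aleph_{n+1}$'') form arbitrarily large independent families operable at every world, so by theorems~\ref{Theorem.Switches-S5} and~\ref{Theorem.Switches-iff-dials} every world has $\Val(W,\mathcal{L}_\in)\of S5$; adjoining the parametric collapse buttons ``$\check\kappa$ is countable'' for the uncountable cardinals $\kappa$ of the current world gives independent families of unpushed buttons-and-switches, so by theorem~\ref{Theorem.Buttons+switches-S4.2} every world has $\Val(W,\mathcal{L}_{\in,W})\of S4.2$, hence equality. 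The middle inclusion $\Val(W,\mathcal{L}_{\in,W})\of\Val(W,\mathcal{L}_\in)$ is the general monotonicity in the expressiveness of the language.

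For the sharp upper bound I would simply choose the original model well. Since the maximality principle $\MP$ is equiconsistent with $\ZFC$, and $\MP(\R)$ has slightly higher strength, by \cite{Hamkins2003:MaximalityPrinciple}, starting from a countable $M\satisfies\MP$ makes $S5$ valid at $M$ itself for sentences, and from $M\satisfies\MP(\R)$ valid with real parameters; under the reduction these are precisely the asserted values $\Val(W,\mathcal{L}_\in)=S5$ and $\Val(W,\mathcal{L}_{\in,\R})=S5$ at a world of the multiverse.

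The sharp lower bound is where the genuinely new work lies, and I expect it to be the main obstacle. Here I must locate a single world, necessarily a ground $W\of M$, carrying arbitrarily large independent families of \emph{parameter-free} unpushed buttons together with switches, so that theorem~\ref{Theorem.Buttons+switches-S4.2} caps its validities at $S4.2$ even without parameters. The natural candidates are the collapse buttons $b_n$ asserting ``$\aleph_n^L$ is countable,'' which are pure buttons (upward absolute and definable without parameters) and are \emph{unpushed} at a world exactly when that world has not collapsed $\aleph_n^L$. Because forcing upward only ever pushes such buttons, passing to a deeper ground of $M$ leaves more of them unpushed; the delicate point is to guarantee that a deep enough ground actually exists \emph{within the generic multiverse}---that is, that $M$ has a set-forcing ground in which infinitely many of the $\aleph_n^L$ remain uncountable. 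I would secure this using set-theoretic geology \cite{FuchsHamkinsReitz2015:Set-theoreticGeology}: if $M$ already has infinitely many $\aleph_n^L$ uncountable we take $W=M$, and otherwise the downward structure of the grounds supplies a ground uncollapsing cofinitely many of them. Combining such a $W$ with the \GCH-switches then yields $S4.2=\Val(W,\mathcal{L}_{\in,W})=\Val(W,\mathcal{L}_\in)$, as required.
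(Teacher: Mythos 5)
Your reduction of the general bounds and of the sharp upper bound to theorem~\ref{Theorem.HamkinsLowe-modal-logic-of-forcing-is-S4.2} is in the spirit of the paper's argument, and your product-forcing verification of axiom .2 and your use of the \GCH-pattern switches match the paper. (One caveat on the reduction itself: your zig-zag induction showing that the worlds above $W$ are exactly its forcing extensions does not go through with only the intermediate-model theorem and composition of forcing---for zig-zags of length three the intermediate models need not be comparable to $W$, and one needs Usuba's strong \DDG\ to collapse the zig-zag to ``forcing extension of a ground,'' after which the intermediate-model theorem applies.)

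The genuine gap is in the part you rightly flag as the new content, the sharp lower bound. Your plan rests on finding a ground of $M$ in which infinitely many of the buttons ``$\aleph_n^L$ is countable'' are unpushed, and you assert that set-theoretic geology ``supplies a ground uncollapsing cofinitely many of them.'' It does not: grounds are inner models, and a cardinal that is countable in the mantle of $M$ is countable in every ground and hence in every world of the generic multiverse. For instance, if $0^\sharp$ exists in $M$ then $0^\sharp$ exists in every ground (set forcing cannot create it), so every $\aleph_n^L$ is countable throughout the entire generic multiverse and your buttons are all permanently pushed in every world; more generally the mantle itself may already collapse them all. (There is also a secondary issue that the assertions ``$\aleph_n^L$ is countable'' are not mutually independent, since pushing one pushes all earlier ones.) This is precisely why the paper abandons $L$-relative control statements here: the correct move is to relativize the Rittberg or Friedman--Fuchino--Sakai buttons to the \emph{mantle} rather than to $L$. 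The mantle is forcing-invariant and uniformly definable in every world of the generic multiverse, and Usuba's strong \DDG\ provides, for any ordinal $\gamma$, a ground $W\of M$ whose $V_\gamma$ agrees with that of the mantle; in such a ground the mantle-relative buttons are unpushed, yielding $\Val(W,\mathcal{L}_\in)=S4.2$ without parameters.
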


\begin{proof}
We've already mentioned that S4.2 is always valid for the potentialist semantics of the generic multiverse, which is the same as the modal logic of forcing for those models, and exactly S4.2 is realized for every model when parameters are allowed. Meanwhile, we also know that some models of set theory can exhibit S5 as their forcing validities, as before. In order to prove the theorem, therefore, what we shall show is that every model of set theory contains a ground whose validities are exactly S4.2, without parameters. For this, it will suffice to show that every model of set theory $M$ has a ground model $W$ by set-forcing such that $W$ has arbitrarily large finite families of independent buttons and switches, expressible without parameters. In order to prove this fact, we shall rely on the recent breakthrough result of Toshimichi Usuba~\cite{Usuba2017:The-downward-directed-grounds-hypothesis-and-very-large-cardinals}, establishing the strong downward-directed grounds hypothesis \DDG, which asserts that the ground models of the set-theoretic universe $V$ are downward set-directed: for any set-indexed family of grounds $\set{W_i\mid i\in I}$ in the uniform ground-model enumeration, there is a ground model $W$ with $W\of W_i$ for all $i\in I$. It follows that for every model of set theory $M$ and for every ordinal $\gamma$ in $M$, there is a ground model $W\of M$ such that $V_\gamma^W$ is the same as the $V_\gamma$ of the mantle of $M$, the intersection of all the grounds of $M$. In other words, $V_\gamma^W$ is least amongst the $V_\gamma$'s of the grounds of $M$. In particular, the cardinal and \GCH\ structure of $V_\gamma^W$ on the cardinals up to, say, $\aleph_{\omega+\omega}$ is absolutely definable in every model of the generic multiverse of $M$, since the mantle itself is definable in those models and $W$ agrees with the mantle in that realm.

We claim that this ground $W$ has independent buttons and switches, and therefore its valid principles of forcing are exactly S4.2. For this, we cannot seem to use the stationary-set buttons that Hamkins and \Lowe\ described in~\cite[section~4]{HamkinsLeibmanLoewe2015:StructuralConnectionsForcingClassAndItsModalLogic}, since those make reference to an absolutely definable partition using the $L$-order. Nevertheless, we may modify the buttons identified by Jakob Rittberg, or alternatively modify the buttons of Friedman, Fuchino and Sakai, but using in each case the cardinals as defined in the mantle rather than $L$. Since as we mentioned the mantle is forcing-invariant and therefore an absolutely definable class common to all the models in the generic multiverse, the referent of such expressions as $\aleph_2^{\rm Mantle}$, for example, is invariant in this generic multiverse. For this reason, the buttons and switches relativized to the mantle rather than $L$ will have the absoluteness properties that made them function as independent buttons and switches. Thus, the modal logic of forcing over $W$ is exactly $\Val(W,\mathcal{L}_\in)=S4.2$, as desired.
\end{proof}

Let us consider next an instance of set-theoretic potentialism that we find to exhibit both height potentialism and width potentialism in a natural way. Namely, consider a model of set theory $M$ in the context of its generic multiverse. It is a consequence of Usuba's theorem on the strong \DDG\ that every model $W$ in the generic multiverse of $M$ is a forcing extension of a ground of $M$, so it has the form $W=W_r[G]$, where $W_r\of M$ is the ground of $M$ indexed by $r$ in the ground-model enumeration theorem, and where $G\of\Q\in W_r$ is $W_r$-generic. Let $\mathcal{M}$ be the collection of models of the form $V_\beta^W$, where $W=W_r[G]$ is such a kind of model in the generic multiverse of $M$ and where $\beta$ is large enough so that $r,\Q,G\in V_\beta^W$. Because of non-amalgamation, $\mathcal{M}$ is not generally converging to a limit model, but we may nevertheless consider $\mathcal{M}$ under the potentialist semantics. This is height potentialism, since we may always make $\beta$ larger and thereby gain new ordinal heights; and it is width potentialism, because we can always force to add more reals or additional generic subsets to the infinite sets that we current have. What are the modal validities of this variety of set-theoretic potentialism? Let us call it the system of {\df generic-multivese rank potentialism} over $M$.

\begin{theorem}
 In the system of generic-multiverse rank potentialism over a fixed countable model of \ZFC, every world $W$ obeys
    $$S4.2\ \of \ \Val(W,\mathcal{L}_{\in,W}^\Diamond)\ \of\ \Val(W,\mathcal{L}_\in)\ \of \ S5.$$
 Depending on the original model, there can be a world $W$ with
    $$S4.2\ = \ \Val(W,\mathcal{L}_{\in,W}^\Diamond)\ =\ \Val(W,\mathcal{L}_\in).$$
\end{theorem}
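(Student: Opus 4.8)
The plan is to treat the four inclusions in turn, observing first that the central inclusion $\Val(W,\mathcal{L}^\Diamond_{\in,W})\of\Val(W,\mathcal{L}_\in)$ is automatic, since it is a more severe requirement to be valid for the more expressive language, and that S4 is valid at every world by Theorem~\ref{Theorem.lower-bounds}(1), the accessibility relation being reflexive, transitive and inflationary. The real content is therefore the validity of the $.2$ axiom (for the lower bound), the S5 upper bound, and the sharpness claim. A preliminary structural point organizes everything: since each world $V_\beta^W$ is required to contain the data $r,\mathbb{Q},G$ exhibiting $W=W_r[G]$ as a forcing extension of the ground $W_r$, any world accessible from $V_\beta^W$ recognizes $W$ as one of its grounds, and so the worlds reachable from $V_\beta^W$ are exactly the rank-initial segments $V_\gamma^{W[K]}$ of the forcing extensions $W[K]$ of $W$, with $\gamma\geq\beta$. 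Thus the accessibility relation above a fixed world is governed by forcing over $W$ in the width direction and by a linear height coordinate.

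For the S5 upper bound I would reuse the height dial of Theorem~\ref{Theorem.rank-potentialism-bounds}: for $j<\omega$ let $d_j$ assert that the ordinal height is $\lambda+j$ for a limit ordinal or zero $\lambda$. This is an $\mathcal{L}_\in$-sentence, correctly interpreted in each $V_\beta^W$, and it forms an infinite dial for $\mathcal{M}$, since width (forcing) moves preserve the ordinals and hence the dial value, while a height move over any world realizes any prescribed $j$. Theorems~\ref{Theorem.Switches-S5} and~\ref{Theorem.Switches-iff-dials} then give $\Val(W,\mathcal{L}_\in)\of S5$ at every world.

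The hard part is the $.2$ lower bound, which cannot be obtained from Theorem~\ref{Theorem.lower-bounds}(3): by the non-amalgamation phenomenon the system $\mathcal{M}$ is genuinely not directed. Instead I would verify $\possible\necessary\varphi\implies\necessary\possible\varphi$ for every $\varphi\in\mathcal{L}^\Diamond_{\in,W}$ by the product-forcing argument behind Theorem~\ref{Theorem.Generic-multiverse-potentialism}, the harmless height coordinate being absorbed at the end. Suppose $V_\beta^W\satisfies\possible\necessary\varphi(\vec a)$. By the structural point, the witness is some $V_{\beta^*}^{W[H]}\satisfies\necessary\varphi(\vec a)$, so some condition $p$ in the forcing $\mathbb{P}\in W$ giving $H$ forces the statement ``$V_{\beta^*}^{W[\dot K]}\satisfies\necessary\varphi(\vec a)$'' (this statement is expressible because the potentialist modalities here are forcing together with height, hence definable). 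Now given any accessed world $V_{\beta'}^{W[H']}$, I would force with $\mathbb{P}$ below $p$ over $W[H']$, mutually generically with $H'$; writing $K$ for the resulting generic, the product gives $W[H'][K]=W[K][H']\fo W[K]$ with $p\in K$, so that $V_{\beta^*}^{W[K]}\satisfies\necessary\varphi(\vec a)$ and $V_{\beta^*}^{W[K]}\of V_\gamma^{W[H'][K]}$ for $\gamma\geq\beta^*$. Taking $\gamma\geq\max(\beta',\beta^*)$, the world $V_\gamma^{W[H'][K]}$ lies above both $V_{\beta'}^{W[H']}$ and the necessarily-$\varphi$ world $V_{\beta^*}^{W[K]}$, whence it satisfies $\varphi(\vec a)$. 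This shows $V_{\beta'}^{W[H']}\satisfies\possible\varphi(\vec a)$, and as the accessed world was arbitrary, $V_\beta^W\satisfies\necessary\possible\varphi(\vec a)$. I expect the delicate points to be the extraction of a condition $p$ forcing $\necessary\varphi$ (so that the re-run generic lands in the same necessarily-$\varphi$ region) and checking that this goes through uniformly when $\varphi$ itself carries modal operators and parameters.

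Finally, for the sharpness I would import, via Usuba's strong $\DDG$ exactly as in the proof of Theorem~\ref{Theorem.Generic-multiverse-potentialism}, a ground $W$ of $M$ carrying arbitrarily large finite families of independent width buttons that are expressible in $\mathcal{L}_\in$ without parameters, namely the mantle-relativized buttons used there; these remain buttons in the present system because they are upward absolute under both forcing and height extension. Such buttons are mutually independent with the height dial, since forcing preserves ordinal height and a height extension forces nothing, so from any world one may push a chosen button while simultaneously setting the dial to any value, without disturbing the other as-yet unpushed buttons. Taking a sufficiently tall $V_\beta^W$ at which these buttons are not yet pushed, Theorem~\ref{Theorem.Buttons+switches-S4.2} yields $\Val(V_\beta^W,\mathcal{L}_\in)\of S4.2$; together with the lower bound and the automatic central inclusion this gives $S4.2=\Val(V_\beta^W,\mathcal{L}^\Diamond_{\in,V_\beta^W})=\Val(V_\beta^W,\mathcal{L}_\in)$, as claimed.
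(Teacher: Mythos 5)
Your treatment of the lower bound and of the S5 upper bound is essentially the paper's: the $.2$ axiom is obtained by the same product-forcing argument (directedness genuinely fails here, so this is the right move), and for the upper bound you use the ordinal-height dial where the paper uses the \GCH-pattern switches---an immaterial difference, since both are parameter-free $\mathcal{L}_\in$ control statements operable from every world. One small caution on your ``structural point'': an accessed world $V_\gamma^{W'}$ is merely required to contain $V_\beta^W$ as a subset, and this does not by itself make $W'$ a forcing extension of $W$; the paper's version of the $.2$ argument works with an arbitrary extending world and transfers the forcing notion from the witnessing world by a condition, so you should phrase the argument that way rather than resting it on the claim that every accessible world has the form $V_\gamma^{W[K]}$.

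The genuine gap is in your sharpness argument. You propose to import the mantle-relativized buttons from the generic-multiverse case via Usuba's strong \DDG\ and to conclude that \emph{every} instance of generic-multiverse rank potentialism has a world with validities exactly S4.2. But the worlds here are rank-initial segments $V_\beta^{W'}$, not full models of the generic multiverse, and the mantle is defined by quantifying over all grounds of the ambient class-sized model; there is no reason a given $V_\beta^{W'}$ computes the mantle of $W'$ correctly, so the referent of expressions like $\aleph_2^{\rm Mantle}$ need not be stable across the worlds of the system, and the purported buttons may fail to be well-defined buttons at all. The issue is not upward absoluteness of a pushed button but whether the button \emph{statements} have an invariant meaning in these truncated worlds. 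The paper explicitly flags exactly this obstacle and leaves open whether every instance of the system contains a world validating only S4.2; this is why the theorem is stated with the hedge ``depending on the original model.'' The paper's actual proof of sharpness takes the original model to satisfy $V=L$, where the buttons and switches of the modal logic of forcing over $L$ are locally verifiable---their status can be checked inside $V_\beta^{L[G]}$ for large enough $\beta$---so no appeal to the mantle is needed. To repair your argument you should either restrict to such an original model, or else supply a proof that the mantle (or a sufficiently large rank-initial segment of it) is uniformly and correctly definable in all worlds of the system, which is precisely what is not known.
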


\begin{proof}
Since the inclusion relation is both transitive and reflexive on $\mathcal{M}$, it follows that S4 is valid at every world, for any language interpreted in those worlds. For the validity of .2, we cannot appeal to directedness, since as we noted earlier this collection of models has non-amalgamation and therefore is not directed; nevertheless, .2 is valid as in the modal logic of forcing because of product forcing. Namely, if $\possible\necessary\varphi(\vec a)$ holds at world $V_\beta^W$, where $W=W_r[G]$ and $r,G,\Q\in V_\beta^W$, then it means there is some larger $V_\gamma^{U}$, with $U=U_s[H]$ and forcing $s,H,\P\in V_\gamma^U$, such that any still larger $V_\delta^{U'}$ satisfies $\varphi(\vec a)$. Thus, this situation is forced by some condition in $P$ over $U_s$. So for any $V_\theta^{W'}$ extending $V_\beta^W$, we can still force over $W'$ with that forcing and find a common extension $V_\lambda^{W''}$ satisfying $\varphi(\vec a)$. So S4.2 is valid.

Meanwhile, it is easy to see that the validities of any particular world are contained within S5, since one can use the \GCH\ pattern as a family of independent switches. One can always force over any given model so as to achieve any desired finite pattern for the \GCH\ at the cardinals $\aleph_n$.

Finally, consider the generic-multiverse rank-potentialist system $\mathcal{M}$ arising from a model of $\ZFC+V=L$. We may use the same independent buttons and switches of theorem~\ref{Theorem.HamkinsLowe-modal-logic-of-forcing-is-S4.2}, the point being that one doesn't need the entire universe to verify the status of the buttons and switches that were used there. So it is enough to have $V_\beta^{L[G]}$ for large enough $\beta$ in order to operate the buttons and switches.
\end{proof}

We are unsure whether every instance of generic-multiverse rank potentialism contains a world whose validities are exactly S4.2, since in our argument, we used a model of $V=L$ in order to find a model with independent buttons and switches. It would also suffice if there was a particular world $V_\beta^W$ that was tall enough and definable in all the other worlds in the system, so that we could form buttons and switches by reference to that world. In the case of generic-multiverse potentialism, we were able to use the mantle for this purpose, but in generic-multiverse rank potentialism, we do not know that the mantle will necessarily work, since perhaps some $V_\beta^W$ can be wrong about the mantle of $W$. Perhaps the mantle of $W$ is definable in $V_\beta^W$ using the language $\mathcal{L}_\in^\Diamond$? If so, then we will get that the potentialist system includes a world whose validities are S4.2 with respect to $\mathcal{L}_\in^\Diamond$-sentences.

One observation that may bear on this question is the fact that the generic-multiverse rank-potentialist system $\mathcal{M}$, consisting of the models $V_\beta^W$, where $W$ is in the generic multiverse of $M$, is identical to the forcing-extension rank-potentialist system that arises as $V_\beta^{\mathbb{M}[G]}$, where $\mathbb{M}$ is the mantle of $M$. This is true because the strong downward directedness hypothesis implies that for every ordinal $\beta$ in $M$, there is a ground $W_r\of M$ such that the $V_\beta$ of $W_r$ is the same as the $V_\beta$ of the mantle $\mathbb{M}$.

\subsection{Countable transitive model potentialism}

Let us now consider a very natural and attractive case of set-theoretic potentialism, namely, the potentialist system $\mathcal{C}$ consisting of the countable transitive models of \ZFC\ set theory.

\begin{theorem}
 In the potentialist system $\mathcal{C}$ consisting of the countable transitive models of \ZFC, every world $W$ satisfies every S4 assertion, with respect to substitution instances in any language interpreted at every world of $\mathcal{C}$; furthermore, the propositional modal validities of any particular world, with respect to substitution instances in any language extending the language of set theory, are contained in S5.
\end{theorem}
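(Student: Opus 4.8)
The plan is to read off both bounds from the general machinery of Section~\ref{Section.tools}, the only genuinely new point being that the class $\mathcal{C}$ of countable transitive models of \ZFC\ is closed under set forcing. First I would record that the potentialist accessibility relation on $\mathcal{C}$ is simply $\of$: for transitive $\in$-structures, $W$ is a substructure of $U$ exactly when $W\of U$, since $\in$ and $=$ are absolute between transitive sets. This relation is manifestly reflexive and transitive, so the lower bound is immediate: by the frame-based argument of theorem~\ref{Theorem.lower-bounds}(1), every S4 assertion is valid at every world of $\mathcal{C}$. Since S4-soundness for a reflexive transitive frame depends only on the frame and not on the particular statements substituted for the propositional variables, this holds for substitution instances drawn from any language interpreted at every world of $\mathcal{C}$, settling the first half.

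For the upper bound I would use the switch criterion, theorem~\ref{Theorem.Switches-S5}. The key observation is that every world $U\in\mathcal{C}$ is itself a countable transitive model of \ZFC, so for any set forcing $\Q\in U$ there is a $U$-generic filter $G$ (by countability of $U$), and the extension $U[G]$ is again a countable transitive model of \ZFC\ with $U\of U[G]$; hence $U[G]$ is a world of $\mathcal{C}$ accessed by $U$. Thus the entire forcing multiverse over any world $U$ sits inside $\mathcal{C}$ as accessible worlds. Consequently the independent switches furnished by the \GCH\ pattern in the modal logic of forcing (as in the discussion following theorem~\ref{Theorem.HamkinsLowe-modal-logic-of-forcing-is-S4.2}) operate at every world of $\mathcal{C}$: given any world $U$ reachable from $W$ and any desired finite pattern, one forces that pattern over $U$ and lands in a world $U[G]$ of $\mathcal{C}$, so each such statement is a switch and arbitrarily large finite families of them are independent at $W$. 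These switch assertions are expressible in $\mathcal{L}_\in$ without parameters.

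Feeding these switches into theorem~\ref{Theorem.Switches-S5} then yields $\Val_\mathcal{C}(W,\mathcal{L}_\in)\of S5$ at every world $W$. Finally, to obtain the conclusion for an arbitrary language $\mathcal{L}'$ extending $\mathcal{L}_\in$, I would invoke the monotonicity recorded in Section~\ref{Section.semantics-of-potentialism}: since $\mathcal{L}_\in\of\mathcal{L}'$ we have $\Val_\mathcal{C}(W,\mathcal{L}')\of\Val_\mathcal{C}(W,\mathcal{L}_\in)\of S5$, because demanding validity against a richer stock of substitution instances can only shrink the set of validities. This gives the S5 upper bound for every world and every language extending the language of set theory.

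The main obstacle, such as it is, lies in the switch construction rather than in any deep feature of $\mathcal{C}$: one must be sure the \GCH-pattern statements can be both switched on and switched off by set forcing over an \emph{arbitrary} reachable world, so that they qualify as switches throughout $\mathcal{C}$ and not merely at $W$. This is exactly the content of the modal-logic-of-forcing switch analysis, and it transfers here precisely because, as noted, forcing over a world of $\mathcal{C}$ never leaves $\mathcal{C}$. No directedness or amalgamation of $\mathcal{C}$ is needed for these crude bounds; the sharper identification of the validities as S4.2 is a separate matter handled elsewhere.
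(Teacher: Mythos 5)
Your proposal is correct and follows essentially the same route as the paper: the lower bound comes from reflexivity and transitivity of the inclusion relation via theorem~\ref{Theorem.lower-bounds}, and the upper bound comes from the \GCH-pattern switches, controllable by forcing over any countable transitive model (which stays inside $\mathcal{C}$), fed into theorem~\ref{Theorem.Switches-S5}. Your added care about why forcing extensions remain in $\mathcal{C}$ and the explicit monotonicity step for larger languages are just slightly more detailed versions of what the paper leaves implicit.
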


\begin{proof}
The potentialist accessibility relation is inclusion, which is transitive and reflexive. Thus, S4 is valid at every world, even with respect to expansions of the language interpreted in every world of $\mathcal{C}$. Meanwhile, if $W$ is a world in $\mathcal{C}$, then $W$ admits arbitrarily large independent families of switches, such as the assertions expressing the \GCH\ patterns on the $\aleph_n$'s. The truth of these assertions can be controlled independently by forcing over any countable transitive model of set theory. Thus, by theorem~\ref{Theorem.Switches-S5}, the modal validities of $W$ are contained within S5.
\end{proof}

The situation is much nicer and we can prove far more under a certain robustness assumption, namely, that every real is an element of a countable transitive model of \ZFC. This assumption amounts to a very weak large cardinal hypothesis, for if there is an inaccessible cardinal, or even merely a worldly cardinal, then a simple \Lowenheim-Skolem argument shows that indeed every real is an element of a countable transitive model of \ZFC.

\begin{theorem}
  Assume that every real is an element of a countable transitive model of \ZFC. Then the potentialist system $\mathcal{C}$ consisting of the countable transitive models of \ZFC\ provides a potentialist account of the collection $H_{\omega_1}$ of hereditarily countable sets. Furthermore, every world $W$ obeys
    $$S4.2\ =\ \Val_\mathcal{C}(W,\mathcal{L}_{\in,W})\ \of \ \Val_\mathcal{C}(W,\mathcal{L}_\in)\ \of \ S5.$$
  The lower bound is sharp, for some worlds $W$ have
    $$S4.2\ =\ \Val_\mathcal{C}(W,\mathcal{L}_{\in,W})\ = \ \Val_\mathcal{C}(W,\mathcal{L}_\in).$$
  The upper bound also is sharp, for some other worlds $W$ have
    $$\Val(W,\mathcal{L}^+)=S5,$$
  for any particular countable language $\mathcal{L}^+$ extending $\mathcal{L}_\in$.
\end{theorem}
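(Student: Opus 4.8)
The plan is to treat the four assertions in turn, reusing the control-statement machinery of Section~\ref{Section.tools}.

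First, the potentialist account and the S4.2 lower bound. Every $W\in\mathcal{C}$ is a countable transitive set, so its elements are hereditarily countable and $\langle W,\in\rangle$ is a substructure of $\langle H_{\omega_1},\in\rangle$; being transitive, all these worlds agree on atomic ($\in$-)truth. For convergence I would establish full directedness: given $W_0,W_1\in\mathcal{C}$ and any $a\in H_{\omega_1}$, code $W_0$, $W_1$ and $a$ by a single real $c$ (their transitive closures are countable); by hypothesis $c$ lies in some ctm $N\satisfies\ZFC$, and $N$ decodes $W_0,W_1,a$ via the absolute Mostowski collapse, so $W_0,W_1,a\in N$, whence $W_0,W_1\of N$ and $a\in N$. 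Thus $\mathcal{C}$ is directed and its union is $H_{\omega_1}$, so it converges to $H_{\omega_1}$, and the S4.2 lower bound at every world follows from theorem~\ref{Theorem.lower-bounds}(3).

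Next the upper bounds. For the first displayed equality I want independent buttons and switches expressible with parameters at an arbitrary world $W$. The switches are the finite \GCH\ patterns on the $\aleph_n$, toggled by forcing (no parameters needed). For buttons I would fix a sparse sequence of uncountable cardinals of $W$, say $\gamma_n=\aleph_{\omega\cdot n+1}^W$ as parameters, and let $b_n$ assert that $\gamma_n$ has been collapsed; each $b_n$ is false at $W$ (hence an unpushed button there), is upward absolute since a collapsing surjection persists into every larger ctm, and can be pushed independently by collapsing $\gamma_n$ to its predecessor cardinal without disturbing the others. By theorem~\ref{Theorem.Buttons+switches-S4.2}, applied at $W$ where these buttons are unpushed, $\Val_\mathcal{C}(W,\mathcal{L}_{\in,W})\of S4.2$, which with the lower bound gives the equality at every world. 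The \GCH\ switches alone, being parameter-free, give $\Val_\mathcal{C}(W,\mathcal{L}_\in)\of S5$ at every world via theorem~\ref{Theorem.Switches-S5}. For the sharpness of the lower bound without parameters, I would take $W$ to be a ctm satisfying $V=L$ (for instance $L^N$ for any ctm $N$); then the buttons ``$\aleph_{\omega\cdot n+1}^L$ is collapsed'' are unpushed at $W$ and expressible without parameters, so theorem~\ref{Theorem.Buttons+switches-S4.2} yields $\Val_\mathcal{C}(W,\mathcal{L}_\in)=S4.2$.

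Finally the S5 world. Here, for the fixed countable language $\mathcal{L}^+$, I would construct a world $W$ at which every button is already pushed, i.e.\ an existentially closed member of $\mathcal{C}$: enumerating the countably many $\mathcal{L}^+$-sentences $\sigma$, I would build an increasing chain of ctms $W_0\of W_1\of\cdots$ in which, whenever $\sigma$ is a button over the current model (can be made permanently true by passing to a suitable larger ctm, combining a forcing step with a height increase), it is pushed; directedness keeps previously pushed buttons pushed, and a button once made necessary stays necessary. At the model $W$ produced in the limit, every $\sigma$ with $\possible\necessary\sigma$ is already true, which is exactly the axiom $\possible\necessary\sigma\implies\sigma$ for $\mathcal{L}^+$-instances; since S4.2 is already valid, this gives $\Val(W,\mathcal{L}^+)=S5$. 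The main obstacle is ensuring that the limit of this button-pushing construction is itself a well-founded model of \ZFC: unlike the pure-forcing case of theorem~\ref{Theorem.HamkinsLowe-modal-logic-of-forcing-is-S4.2}, the height now grows, and the union of an $\omega$-chain of ctms of strictly increasing height can fail Replacement. I would handle this by interleaving the button-pushing with reflection steps, arranging the chain to be sufficiently elementary that enough instances of Collection reflect cofinally into it, drawing on the abundant supply of ctms furnished by the hypothesis, so that the resulting $W$ is a genuine countable transitive model of \ZFC\ realizing S5 for $\mathcal{L}^+$.
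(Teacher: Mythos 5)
Your overall architecture matches the paper's: directedness of $\mathcal{C}$ (obtained by coding finitely many worlds and a target set by a single real and applying the hypothesis) gives the potentialist account of $H_{\omega_1}$ and the S4.2 lower bound; independent buttons and switches give the S4.2 upper bound via theorem~\ref{Theorem.Buttons+switches-S4.2}; and a diagonal chain construction produces the S5 world. But your choice of buttons has two genuine problems. First, the independence of the collapse buttons $b_n=$ ``$\gamma_n$ is collapsed'' is unjustified and in fact fails at some accessible worlds: independence requires that at \emph{every} world of the system one can push any one unpushed button without pushing the others, but the \Levy\ collapse $\Coll(\mu,\gamma_n)$ (with $\mu$ above $\gamma_{n-1}$, as needed to keep the earlier buttons unpushed) collapses $\gamma_n^{<\mu}$ to $\mu$, and from $W$ one can first force $2^{\gamma_{n-1}}\geq\gamma_{n+1}$ by ccc forcing without pushing any button, arriving at a world where your collapse of $\gamma_n$ inadvertently pushes $b_{n+1}$. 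This interference problem is precisely why the paper uses stationary-set buttons instead: $b_n$ asserts that the $n$th piece of the $L$-least partition of $\omega_1^{L_\alpha}$ is nonstationary, where $L_\alpha$ is the minimal transitive model of \ZFC; club-shooting kills one piece while preserving the stationarity of the others and the cardinals, and moving to a taller world pushes them all at once. Second, your parameter-free buttons ``$\aleph_{\omega\cdot n+1}^{L}$ is collapsed'' are not buttons at all, because the referent of $L$ is recomputed in each world: from a forcing extension $L_\beta[g]$ in which $\aleph_1^{L_\beta}$ has been collapsed, one can pass to a taller $L_{\beta'}$ containing $g$, where $\aleph_1^{L}$ now denotes $\aleph_1^{L_{\beta'}}$, a cardinal of that world---so the button becomes unpushed. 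The fix is again to anchor the statements to the fixed, absolutely definable set $L_\alpha$ rather than to the class $L$ of the current world.

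On the S5 part your construction is essentially the paper's theorem~\ref{Theorem.CTM-maximality-principle}, but the closing worry about whether the union of the chain models \ZFC\ is a red herring, and the proposed repair by an elementary chain would be self-defeating, since along an elementary chain no button ever gets pushed. One should not take the union at all: by $\sigma$-directedness (code the countably many $W_n$ by a single real and apply the hypothesis to get a countable transitive model of \ZFC\ containing them all), the chain has an upper bound $W$ in $\mathcal{C}$, and any such upper bound already validates every instance of axiom 5 for $\mathcal{L}^+$.
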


\begin{proof}
Certainly every countable transitive set is a substructure of $\<H_{\omega_1},\in>$, and every element of $H_{\omega_1}$ can be coded by a real and therefore placed into a countable transitive model of \ZFC, under our assumption. It follows that $\mathcal{C}$ is directed (and even $\sigma$-directed), and so $\mathcal{C}$ provides a potentialist account of $H_{\omega_1}$. Since this system is directed, it follows that S4.2 is valid at every world, regardless of what other structure is imposed or which language or parameters are used.

Let $L_\alpha$ be the smallest transitive model of \ZFC. This world has independent families of buttons and switches, using a modified version of those in the modal logic of forcing. Namely, the button $b_n$ asserts that $n^{\rm th}$ piece of the $L$-least partition of $\omega_1^{L_\alpha}$ is no longer stationary (note that $L_\alpha$ is definable as a class in any transitive model of \ZFC), and the switch $s_m$ asserts that the \GCH\ holds at (the current) $\aleph_m$, for $m\geq 1$. These buttons and switches can be controlled independently by forcing, although the buttons are indeed buttons with respect to the potentialist semantics. In particular, moving to a taller model will push all the buttons, since $L_\alpha$ will be realized as countable there. Since we have sufficient independent buttons and switches, it follows by theorem~\ref{Theorem.Buttons+switches-S4.2} that the validities of this world, with respect to substitution instances in the language of set theory, are exactly $\Val(L_\alpha,\mathcal{L}_\in)=S4.2$. The same idea works in any model, if parameters are allowed, showing $S4.2=\Val(W,\mathcal{L}_{\in,W})$ for every world. Meanwhile, the \GCH-pattern switches are operable over any model of set theory, and so the validities of any particular world are contained within S5.

The upper bound is provided by theorem~\ref{Theorem.CTM-maximality-principle}.
\end{proof}

Since $\mathcal{C}$ provides a potentialist account of $H_{\omega_1}$, it follows from theorem~\ref{Theorem.Potentialist-translation} that in the potentialist language $\mathcal{L}^\Diamond$ we may at any world $W\in\mathcal{C}$ to refer to truth in $H_{\omega_1}$, including truth about what other kinds of worlds in $\mathcal{C}$ there may be and how they are related.

Note also that many other worlds of $\mathcal{C}$ than the Shepherdson-Cohen model $L_\alpha$ will also have validities limited to S4.2. For example, any world $W$ that is absolutely definable in all larger worlds will also have independent buttons and switches, for essentially the same reason. This includes many models with various large cardinals, if any exist, such as the smallest transitive model $L_\gamma[\mu]$ realizing the smallest possible ordinal as a measurable cardinal, and similarly with much stronger large cardinal notions.

Let us now turn to the upper bound, by investigating the nature of the worlds satisfying the potentialist maximality principle S5 for this version of potentialism. The maximality principle occurs when S5 is valid at a world. We find these worlds quite attractively to instantiate the \emph{maximize} maxim, with respect both to height and width potentialism. The idea is that if $W$ satisfies the potentialist maximality principle, then any statement $\sigma$ that could become necessarily true, either by moving to a wider or to a taller universe, is already true.

\begin{theorem}\label{Theorem.CTM-maximality-principle}
  If every real is an element of a countable transitive model of \ZFC, then every world $U\in\mathcal{C}$ can be extended to a world $W\in\mathcal{C}$ satisfying the potentialist maximality principle for assertions in any fixed countable language $\mathcal{L}^+$ extending $\mathcal{L}_\in^\Diamond$ (interpreted in every model of $\mathcal{C}$). That is,
    $$\Val_\mathcal{C}(W,\mathcal{L}^+)=S5.$$
\end{theorem}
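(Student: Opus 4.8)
The plan is to prove the two inclusions separately. The upper bound $\Val_\mathcal{C}(W,\mathcal{L}^+)\of S5$ holds at every world and for every extension of $\mathcal{L}_\in$, exactly as in the preceding theorem: the \GCH-pattern assertions on the $\aleph_n$'s give arbitrarily large independent families of switches operable over any countable transitive model of \ZFC, so Theorem~\ref{Theorem.Switches-S5} applies. Hence the whole task is to locate, above a given $U\in\mathcal{C}$, a world $W$ at which all of S5 is valid. I will do this by a single ``push all the buttons'' step rather than an iteration, using $\sigma$-directedness to avoid any limit construction.

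First I record three facts. (i) \emph{Necessity persists upward}: if $\necessary\sigma$ holds at a world $W'$, it holds at every world $\fo W'$, since the worlds accessible from a larger world form a subcollection of those accessible from $W'$. (ii) For $W\fo U$, the sentences pushable over $W$ — those $\sigma$ with $W\satisfies_\mathcal{C}\possible\necessary\sigma$ — form a subset of those pushable over $U$, because a witness extending the larger world $W$ also witnesses pushability over $U$. (iii) Under the hypothesis that every real lies in a countable transitive model of \ZFC, the system $\mathcal{C}$ is $\sigma$-directed, as established in the previous theorem: countably many worlds $W_n$ are coded by a single real, and any countable transitive model of \ZFC\ containing that real contains each $W_n$ as an element, hence as a submodel.

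Now I would construct $W$ in one stroke. Let $T$ be the set of $\mathcal{L}^+$-sentences $\sigma$ with $U\satisfies_\mathcal{C}\possible\necessary\sigma$; since $\mathcal{L}^+$ is countable, $T$ is countable. For each $\sigma\in T$ choose, by the meaning of $\possible\necessary\sigma$, a world $W_\sigma\fo U$ with $W_\sigma\satisfies_\mathcal{C}\necessary\sigma$. By $\sigma$-directedness fix a single $W\in\mathcal{C}$ with $W\fo W_\sigma$ for every $\sigma\in T$. By fact~(i) we then have $W\satisfies_\mathcal{C}\necessary\sigma$ for all $\sigma\in T$, so $W$ is \emph{button-complete}: any sentence pushable over $W$ lies in $T$ by fact~(ii), and is therefore already necessary at $W$.

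Finally I would propagate this upward. Button-completeness is inherited by every $U'\fo W$: a sentence pushable over $U'$ is pushable over $W$ by fact~(ii), hence necessary at $W$, hence necessary at $U'$ by fact~(i); thus $U'\satisfies_\mathcal{C}\possible\necessary\sigma\to\necessary\sigma$ for every $\mathcal{L}^+$-sentence $\sigma$, so every instance of axiom~5 is valid at every world above $W$. The set $\mathcal{S}=\bigcap_{U'\fo W}\Val_\mathcal{C}(U',\mathcal{L}^+)$ is closed under modus ponens, uniform substitution, and — because validity throughout the cone above $U'$ yields validity of the necessitation at $U'$ — under necessitation as well; hence $\mathcal{S}$ is a normal modal logic containing S4 (Theorem~\ref{Theorem.lower-bounds}) together with every instance of axiom~5, giving $S5\of\mathcal{S}\of\Val_\mathcal{C}(W,\mathcal{L}^+)$, and with the upper bound $\Val_\mathcal{C}(W,\mathcal{L}^+)=S5$. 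The conceptual crux, and the step needing the most care, is recognizing that S5-validity at the single world $W$ in fact forces the maximality principle at \emph{every} world above $W$ (through the necessitated instances $\necessary(\possible\necessary p\to p)$), while fact~(ii) shows this is attainable — settling at $W$ every button pushable over the smaller $U$ automatically settles every button available higher up — and $\sigma$-directedness is precisely what discharges these countably many pushes at once, in place of an ill-behaved union of models.
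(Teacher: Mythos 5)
Your proof is correct and takes essentially the same route as the paper's: both push every pushable button and then use the fact that the set of pushable sentences only shrinks as one moves upward, the only difference being that the paper enumerates the countably many $\mathcal{L}^+$-sentences and builds an increasing chain $W_0\of W_1\of\cdots$ whose countable upper bound is $W$, whereas you apply $\sigma$-directedness once to the whole family of witnesses $\set{W_\sigma\mid\sigma\in T}$. Your closing discussion of upward persistence and closure under necessitation is a careful spelling-out of what the paper compresses into the remark that ``S5 is persistent.''
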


\begin{proof}
Assume that every real is an element of a countable transitive model of \ZFC. It follows as we noted earlier that any countably many worlds of  $\mathcal{C}$ have a common upper bound. Consider any particular world $U\in\mathcal{C}$ and any countable language $\mathcal{L}^+$ extending the language of set theory. In particular, the new language might have constants for every element of $U$. Enumerate the assertions of $\mathcal{L}^+$ as $\sigma_0$, $\sigma_1$, and so on. Let $W_0=U$ be the initial world. Given $W_n$, ask whether $W_n\satisfies\possible\necessary\sigma_n$ in this potentialist system. If so, then there is some $W_{n+1}\satisfies\necessary\sigma_n$; otherwise, choose $W_{n+1}$ extending $W_n$ as you like. Having selected $W_n$ for all $n<\omega$, let $W$ be any world with $W_n\of W$ for all $n$. If $W\satisfies\possible\necessary\sigma_n$, then since $W_n$ can access $W$, it follows that $W_n\satisfies\possible\necessary\sigma_n$, in which case $W_{n+1}\satisfies\necessary\sigma_n$, and so $W\satisfies\sigma_n$. Thus, $W$ satisfies every instance of axiom 5 in this language, and since it also satisfies S4.2, it follows that S5 is valid in $W$. And the same conclusion holds with respect to any larger world than $W$, since S5 is persistent.
\end{proof}

We find it interesting to consider the previous theorem in the case that $V=L$. In this situation $L$ can have countable transitive models of some very rich theories, such as a proper class of Woodin cardinals or supercompact cardinals or what have you. So not every $W\in\mathcal{C}$ will satisfy $V=L$, and some will believe themselves to be very far from $L$. Nevertheless, because every such $W$ is countable in $L$, we will be able to extend $W$ to some $L_\alpha$, which of course satisfies $V=L$. In particular, this shows that we can achieve the CTM-potentialist maximality principle in a world satisfying $V=L$. This situation illustrates one of the central philosophical points made by the first author in~\cite{Hamkins2014:MultiverseOnVeqL}, namely, that the capacity to extend the universe upwards tends to undermine the view of $L$ as contradicting the `maximize' maxim of Maddy.

In particular, if $V=L$ and every real is an element of a countable transitive model of \ZFC, then there are such worlds $W$ satisfying the potentialist maximality principle for sentences in any countable language extending the language of set theory (and including $\mathcal{L}_\in^\Diamond$ if desired), such that $W\satisfies V=L$; and there are other such worlds $W$ satisfying the potentialist maximality principle for which $W\satisfies V\neq L$.

\subsection{Countable model potentialism}

Let us now enlarge the potentialist system from merely the transitive models to the collection consisting of all countable models of \ZFC\ set theory.

At first, let's consider this as a potentialist-like system under the substructure relation, so that one world $\<W,\in^W>$ accesses another $\<U,\in^U>$ just in case the first is a substructure of the second, which means $W\of U$ and $\in^W=\in^U\restrict W$. The potentialist idea here is that one can move to a larger universe of set theory by adding more objects, including new elements of old sets, as long as the new $\in$-relation agrees with the old one on the previously existing sets.

\begin{theorem}
  Assume \ZFC\ is consistent and consider the potentialist system consisting of all countable models of \ZFC, under the substructure relation. Every world $W$ in this potentialist system obeys
    $$S4.3\ =\ \Val(W,\mathcal{L}_{\in,W}^\diamond)\ = \ \Val(W,\mathcal{L}_{\in,W})\ \of \ \Val(W,\mathcal{L}_\in)\ \of \ S5.$$
  If $W$ is any countable nonstandard model of \ZFC, then
    $$\Val(W,\mathcal{L}_\in^\Diamond)\ =\ \Val(W,\mathcal{L}_\in)=S5.$$
\end{theorem}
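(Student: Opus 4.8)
The plan is to split into the $S5$ ceiling (from switches), the $S4.3$ ceiling with parameters (from a ratchet), and the matching floors (from amalgamation and a self-similarity analysis), handling the two displayed claims in turn. The chain of inclusions is mostly formal: since $\mathcal{L}_\in\of\mathcal{L}_{\in,W}\of\mathcal{L}_{\in,W}^\Diamond$, enlarging the substitution class can only shrink the validities, giving $\Val(W,\mathcal{L}_{\in,W}^\Diamond)\of\Val(W,\mathcal{L}_{\in,W})\of\Val(W,\mathcal{L}_\in)$. The outer ceiling $\Val(W,\mathcal{L}_\in)\of S5$ holds at every world by Theorem~\ref{Theorem.Switches-S5}, using the \GCH-pattern switches $s_k=$``$\GCH$ holds at $\aleph_k$'', which are parameter-free and can be flipped by forcing over any countable model of \ZFC. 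So for the first claim it remains to show $S4.3\of\Val(W,\mathcal{L}_{\in,W}^\Diamond)$ and $\Val(W,\mathcal{L}_{\in,W})\of S4.3$; with the inclusions these pinch all three middle sets to $S4.3$.

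For the upper bound $\Val(W,\mathcal{L}_{\in,W})\of S4.3$ I would, via Theorem~\ref{Theorem.Ratchets+switches=S4.3}, exhibit arbitrarily large finite ratchets mutually independent from the \GCH\ switches, expressible in $\mathcal{L}_{\in,W}$. The point distinguishing this system from transitive-set and forcing potentialism (which are only $S4.2$) is that every world already models \ZFC\ and hence already contains every \emph{definable} real; one therefore cannot form the independent ``a new definable set exists'' buttons that drive those systems down to $S4.2$, since any such set is already present. Instead, fix in $W$ an increasing sequence $\langle a_n\mid n<\omega\rangle$ of uncountable cardinals (a single parameter) and let $r_n$ assert ``$a_n$ is countable''. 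An injection $a_n\to\omega$ persists under any substructure-extension and remains an injection into the absolute finite ordinals, so $r_n$ is a button; and since $a_m\in a_n$ for $m<n$, $r_n$ necessarily implies $r_m$, so the $r_n$ form a ratchet, cranked exactly by forcing $\mathrm{Coll}(\omega,a_n)$ while preserving $a_{n+1}$ uncountable. The collapsing can be run as a product with the switch-forcing, as in the modal logic of forcing, making ratchet and switches independent, and Theorem~\ref{Theorem.Ratchets+switches=S4.3} then bounds the validities by $S4.3$.

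For the lower bound $S4.3\of\Val(W,\mathcal{L}_{\in,W}^\Diamond)$ I would verify axioms $.2$ and $.3$. Axiom $.2$ follows from directedness: given substructure-extensions $U_0,U_1\fo W$, the theory \ZFC\ together with the atomic diagrams of $U_0$ and $U_1$, identified on their common part, is finitely satisfiable---each finite $\in$-configuration is acyclic and realizable inside a model of \ZFC---so by compactness there is a common countable extension $U\fo U_0,U_1$. Axiom $.3$ is the heart of the matter, and I expect it to be the main obstacle, because the frame is wildly branching and the $.3$ frame condition fails outright; what saves us is that no two set-theoretic assertions can \emph{witness} the branching. The approach is to argue that the only robustly permanent features here are the cardinal-collapse buttons: the extreme flexibility of the substructure relation---new elements may be inserted below any set and fresh ill-foundedness introduced anywhere---destroys the permanence of essentially every other candidate button, while assertions of countability are \emph{downward closed} (if $a_n$ is countable so is every smaller cardinal) and hence linearly ordered. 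With the permanent skeleton linear there are no two incomparable pushed-button states to refute $.3$, so the axiom holds for all $\mathcal{L}_{\in,W}^\Diamond$ substitution instances.

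For the nonstandard claim the $S5$ ceiling is again the \GCH\ switches, so I need only the maximality principle $\possible\necessary\varphi\implies\varphi$ at a countable ill-founded $W$ for parameter-free $\varphi$, even with modal operators. Suppose $W\satisfies\possible\necessary\varphi$, witnessed by $U\fo W$ with $U\satisfies\necessary\varphi$. The plan is to fold $U$ back into a copy of $W$: by the embedding theorem for countable models of set theory---which are linearly pre-ordered by embeddability, every such model embedding as a substructure into any countable model whose ordinals are order-universal enough to receive it---the ill-foundedness of $W$ lets $\mathrm{Ord}^W$ absorb $\mathrm{Ord}^U$, yielding a substructure embedding $U\to W$. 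Relabelling its range produces $U'\cong W$ with $U\of U'$; then $U\satisfies\necessary\varphi$ gives $U'\satisfies\varphi$, and since the system of all countable models of \ZFC\ is closed under isomorphism, the isomorphism $U'\cong W$ transfers parameter-free potentialist truth, so $W\satisfies\varphi$. Securing that substructure embedding---the ordinal order-embeddability, which is exactly where nonstandardness is used---is the delicate step.
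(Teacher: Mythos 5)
Your overall architecture (switches for the S5 ceiling, a ratchet for the S4.3 ceiling, embeddability for the nonstandard S5 claim) matches the paper, and your treatment of the final claim---fold the witness $U\models\necessary\varphi$ back into an isomorphic copy of $W$ using bi-embeddability of countable nonstandard models---is essentially the paper's argument and is correct. But there are two genuine gaps in the first claim. First, your ratchet is broken: the statements ``$a_n$ is countable'' are not persistent under the substructure accessibility relation. A substructure extension $U\fo W$ is only required to agree with $W$ on atomic facts about elements of $W$; it may freely add new elements to $a_n$, to $\omega^W$, and to the witnessing injection $f$ itself (new ``ordered pairs'' destroying functionality), so the witness does not survive and $a_n$ can become uncountable again. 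The paper itself emphasizes exactly this pathology (``every set is potentially finite,'' ``the empty set of one world can have members or even become infinite in another world''), which is why its ratchet uses the purely existential, hence genuinely upward-absolute, statements $r_n=$``$\emptyset^W$ has at least $n$ distinct elements,'' cranked by the embeddings of~\cite{Hamkins2013:EveryCountableModelOfSetTheoryEmbedsIntoItsOwnL} that insert new elements into $\emptyset^W$ one at a time.

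Second, your verification of axiom~.3 is not a proof. The claim that ``the only robustly permanent features are the cardinal-collapse buttons, and these are linearly ordered, so .3 holds'' both rests on statements that are not in fact permanent (see above) and does not logically yield the validity of .3 for all substitution instances: the absence of two incomparable buttons you happen to have noticed does not rule out a failing substitution instance built from other assertions. The paper's route is entirely different and is the crux of the whole subsection: the main theorem of~\cite{Hamkins2013:EveryCountableModelOfSetTheoryEmbedsIntoItsOwnL} shows the countable models of \ZFC\ are linearly pre-ordered by embeddability (one model embeds into another just in case its ordinals order-embed into the other's), and since the system is closed under isomorphism, this linearity of the frame up to isomorphic copies is what validates S4.3 at every world. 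You invoke this theorem for the nonstandard claim but not here, where it is the essential ingredient; your compactness amalgamation argument gives only directedness, hence only .2.
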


\begin{proof}
The main theorem of~\cite{Hamkins2013:EveryCountableModelOfSetTheoryEmbedsIntoItsOwnL} shows that the countable models of \ZFC\ are linearly pre-ordered by embeddability: for any two models, one of them is isomorphic to a substructure of the other, and furthermore, one model embeds into another just in case the ordinals of the first order-embed into the ordinals of the second. It follows that the nonstandard models of \ZFC\ are universal under embeddability.

Because the models are linearly pre-ordered by embeddability, it follows that S4.3 is valid in every world. Because these are \ZFC\ models, the \GCH-pattern statements are independent switches, and so the validities of any world are contained in S5. Meanwhile, for any world $W$, let $a=\emptyset^W$, and let $r_n$ be the statement ``$a$ has at least $n$ distinct elements.'' These statements are all false in $W$, of course, since $a$ has no elements in $W$. But the particular embeddings constructed in~\cite{Hamkins2013:EveryCountableModelOfSetTheoryEmbedsIntoItsOwnL} and variations of them show that there are larger models $U$ into which $W$ embeds, in which as many new elements of $a$ as desired are introduced. Thus, these statements form a ratchet, and the \GCH-pattern statements are a mutually independent family of switches. So the modal validites of $W$ for such statements are contained within S4.3, and so the first chain of equations and inclusions is proved.

Finally, for the final statement of the theorem, it is a consequence of the fact that the nonstandard models all access isomorphic copies of each other---the main theorem of~\cite{Hamkins2013:EveryCountableModelOfSetTheoryEmbedsIntoItsOwnL} shows that they are all bi-embeddable---that S5 is valid in each of them for sentences of $\mathcal{L}_\in^\Diamond$.
\end{proof}

Note that a simple compactness argument shows that this potentialist system has amalgamation and even $\sigma$-amalgamation, and furthermore has the joint embedding property, and so actually there is a \Fraisse\ limit here, which will be a copy of the countable random $\Q$-graded digraph, as described in~\cite{Hamkins2013:EveryCountableModelOfSetTheoryEmbedsIntoItsOwnL}. Even though the collection of countable \ZFC\ models does not have a limit model in the sense of our original definition, if we used this embedding-based approach to defining the limit structure, we would be led to find a limit model in this countable random $\Q$-graded digraph. The substructures of this graph that are \ZFC\ models form a potentialist system containing isomorphic copies of any given countable model of \ZFC, which furthermore provide a potentialist account of this graph as the limit structure.

Some set theorists may object that it is more natural to use a slightly more restricted accessiblity relation, because the substructure relation allows for some rather strange things. For example, with the substructure accessibility relation, every set is potentially finite, because results in~\cite{Hamkins2013:EveryCountableModelOfSetTheoryEmbedsIntoItsOwnL} show that every model is isomorphic to a substructure of the hereditarily finite sets of any $\omega$-nonstandard model. Further, no sets are necessarily disjoint, since for any two sets $a,b$, we can access a larger world that has an object that is an element of both. Similarly, the empty set of one world can have members or even become infinite in another world. All these things seem a little strange.

So it may be a little more orderly or stable to consider the collection of countable models of \ZFC\ under the \emph{transitive-substructure} relation, rather than the full substructure relation. That is, in this modified version, we say that one world $\<W,\in^W>$ accesses another world $\<U,\in^U>$, just in case the first is a transitive substructure of the second, meaning that $W\of U$ and $\in^W=\in^U\restrict W$ and furthermore, $x\in^U y\in W$ implies $x\in W$. So the larger world agrees with the smaller world on the membership relation of objects in the smaller world and furthermore, no new elements of old sets are added.

Since this modified accessibility relation is reflexive and transitive, and refines the substructure relation, this is indeed a potentialist system, which therefore validates S4 at every world. And since these are models of set theory, the \GCH-pattern switches show that the validities of any particular world are contained within S5, even when restricted to sentences in the language of set theory.

This modified system is not directed, nor even weakly directed, because if $\<W,\in^W>$ is a countable ill-founded model of \ZFC, then there is a countable $\in^W$-descending sequence $s$ and there can be no extension $\<U,\in^U>$ containing $s$ in which $\<W,\in^W>$ is transitive, even if $W$ is an $\omega$-model, since $s$ reveals the ill-foundedness of $W$. Thus, this system does not provide a potentialist account of any limit model.

We believe that under some reasonable assumption, we can get that this potentialist system has upper bounds for countable increasing chains in the transitive substructure relation, as in the result of~\cite{Hamkins2016:UpwardClosureAndAmalgamationInTheGenericMultiverse}. That is, if you can move successively to larger and larger countable models of ZFC, with each model transitive in the next one, then there is a model at the top inside of which they are all transitive submodels. In this case, we can get models of S5 in this system, for sentences in any countable language, by the same argument as in theorem~\ref{Theorem.CTM-maximality-principle}. We shall leave further investigation of this modified potentialist system for another project.

We suggest that it may be fruitful to consider the multiverse model of Gitman and Hamkins~\cite{GitmanHamkins2010:NaturalModelOfMultiverseAxioms} as a potentialist system under either of the accessibility relations mentioned above. The collection of worlds are the countable computably-saturated models of \ZFC, and we may consider them either under the substructure accessibility relation or the transitive-substructure accessibility relation.

Lastly, we should like to mention that in current work, the first author  \cite{Hamkins:The-modal-logic-of-arithmetic-potentialism} has established that the modal logic of end-extensional potentialism for the models of \PA\ is precisely S4. Adapting that work to the case of set theory, he and W. Hugh Woodin \cite{HamkinsWoodin:The-universal-finite-set} similarly established that the modal logic of top-extensional potentialism for the countable models of \ZFC\ is also precisely S4.

\printbibliography

\end{document}

@BOOK{Hellman:1989,
  AUTHOR =       {Geoffrey Hellman},
  TITLE =        {Mathematics without Numbers},
  PUBLISHER =    {Clarendon},
  YEAR =         {1989},
  address =      {Oxford},
}

@ARTICLE{Linnebo:2013-PHS,
  AUTHOR =       {{\O}ystein Linnebo},
  TITLE =        {The Potential Hierarchy of Sets},
  YEAR =         {2013},
  JOURNAL =      {Review of Symbolic Logic},
  volume =       {6},
  number =       {2},
  pages =        {205-228},
}
@INCOLLECTION{Parsons:1983b,
  AUTHOR =       {Charles Parsons},
  TITLE =        {{Sets and Modality}},
  BOOKTITLE =    {Mathematics in Philosophy},
  PUBLISHER =    {Cornell University Press},
  YEAR =         {1983},
  pages =        {298-341},
  address =      {Cornell, NY},
}
@article{Studd_ItConceptBiModal,
	title = {The Iterative Conception of Set: A (Bi-)Modal Axiomatisation},
	journal = {Journal of Philosophical Logic},
	year = {2013},
	pages = {697--725},
	volume = {42},
	number = {5},
	author = {James Studd}
}

@ARTICLE{Zermelo:1930,
  AUTHOR =       {Ernst Zermelo},
  TITLE =        {{\"{U}ber Grenzzahlen und Mengenbereiche}},
  JOURNAL =      {Fundamenta Mathematicae},
  YEAR =         {1930},
  volume =       {16},
  pages =        {29-47},
  note =         {Translated in~\cite{Ewald:1996}},
}
@BOOK{Ewald:1996,
  AUTHOR =       {William Ewald},
  TITLE =        {From Kant to Hilbert: A Source Book in the Foundations of Mathematics},
  PUBLISHER =    {Oxford University Press},
  YEAR =         {1996},
  volume =       {2},
  address =      {Oxford},
}

@ARTICLE{Putnam:1967b,
  AUTHOR =       {Hilary Putnam},
  TITLE =        {{Mathematics without Foundations}},
  JOURNAL =      {Journal of Philosophy},
  YEAR =         {1967},
  volume =       {LXIV},
  number =       {1},
  pages =        {5-22},
}

@article{Lear1977-setssem,
	year = {1977},
	author = {Jonathan Lear},
	pages = {86--102},
	number = {2},
	title = {Sets and Semantics},
	journal = {Journal of Philosophy},
	volume = {74},
}
@MISC{Linnebo&Shapiro:ActPotInf,
  AUTHOR =       {{\O}ystein Linnebo and Stewart Shapiro},
  TITLE =        {Actual and potential infinity},
  YEAR =         {2016},
  howpublished = {Unpublished manuscript},
}
@incollection{Tait1998-ZermeloConceptionSets,
	year = {1998},
	author = {W. W. Tait},
	publisher = {Clarendon Press},
	booktitle = {The Philosophy of Mathematics Today},
	editor = {Matthias Schirn},
	title = {Zermelo's Conception of Set Theory and Reflection Principles}
}